\newtheorem{theorem}{Theorem}[section]
\newtheorem{thm}[theorem]{Theorem}
\newtheorem{fact}[theorem]{Fact}
\newtheorem{criterion}[theorem]{Criterion}
\newtheorem{prop}[theorem]{Proposition}
\newtheorem{claim}[theorem]{Claim}
\newtheorem{convention}[theorem]{Convention}
\newtheorem{lemma}[theorem]{Lemma}
\newtheorem{cor}[theorem]{Corollary}
\newtheorem{question}[theorem]{Question}
\theoremstyle{definition}
\newtheorem{definition}[theorem]{Definition}
\newtheorem{example}[theorem]{Example}
\newtheorem{remark}[theorem]{Remark}
\DeclareMathOperator \GL{GL}
\newcommand{\mrm}[1]{\mathrm{#1}}
\newcommand{\NN}{{\mathbb{N}}}
\newcommand{\RR}{{\mathbb{R}}}
\newcommand{\QQ}{{\mathbb{Q}}}
\newcommand{\ZZ}{{\mathbb{Z}}}
\newcommand{\CC}{{\mathbb{C}}}
\newcommand{\sub}{\subseteq}
\newcommand{\sN}[1]{_{#1\in \NN}}
\newcommand{\uhr}[1]{\! \upharpoonright_{#1}}
\newcommand{\ML}{Martin-L{\"o}f}
\newcommand{\SI}[1]{\Sigma^0_{#1}}
\newcommand{\bi}{\begin{itemize}}
\newcommand{\ei}{\end{itemize}}
\newcommand{\bc}{\begin{center}}
\newcommand{\ec}{\end{center}}
\newcommand{\ES}{\emptyset}
\newcommand{\tp}[1]{2^{#1}}
\newcommand{\ex}{\exists}
\newcommand{\fa}{\forall}
\newcommand{\lep}{\le^+}
\newcommand{\la}{\langle}
\newcommand{\ra}{\rangle}
\newcommand{\seqcantor}{2^{ \NN}}
\newcommand{\cantor}{\seqcantor}
\newcommand{\ROpcl}[2]{[#2]^\prec_{#1}}
\newcommand{\n}{\noindent}
\newcommand{\vsp}{\vspace{6pt}}
\newcommand{\sss}{\sigma}
\newcommand{\aaa}{\alpha}
\renewcommand{\S}{S_\infty}
\newcommand{\lland}{\, \land \, }
\newcommand\+[1]{\mathcal{#1}}
\newcommand{\wt}{\widetilde}
\newcommand{\ol}{\overline}
\newcommand{\ul}{\underline}
\newcommand{\ape}{\, \hat{\ } \, }
\newcommand{\lra}{\leftrightarrow}
\newcommand{\LR}{\Leftrightarrow}
\newcommand{\RA}{\Rightarrow}
\newcommand{\LA}{\Leftarrow}
\newcommand{\rapf}{\n $\RA:$\ }
\newcommand{\lapf}{\n $\LA:$\ }
\newcommand{\sssl}{\ensuremath{|\sigma|}}
\newcommand{\Sym}{\mrm{Sym}}
\DeclareMathOperator{\Aut}{Aut}
\DeclareMathOperator{\Inn}{Inn}
\DeclareMathOperator{\Out}{Out}
\DeclareMathOperator{\Ext}{\mathrm{Ext}}
\newcommand{\mor}[3]{{#2} \stackrel {#1} \to {#3}}
\numberwithin{equation}{section}
\renewcommand{\hat}{\widehat}
\begin{document}

\title{Logic Blog 2023-24}

 \author{Editor: Andr\'e Nies}

\maketitle

 
\setcounter{tocdepth}{1}
\tableofcontents

 {Note: the 2023 and 2024 Logic Blogs have been joined. 
 
The Logic   Blog is a shared platform for
\bi \item rapidly announcing  results and questions related to logic
\item putting up results and their proofs for further research
\item parking results for later use
\item getting feedback before submission to  a journal
\item fostering collaboration.   \ei

Each year's   blog is    posted on arXiv  2-3 months after the year has ended.

\vsp
\begin{tabbing} 

  \href{https://arxiv.org/pdf/2302.11853.pdf}{Logic Blog 2022} \ \ \ \   \= (Link: \texttt{http://arxiv.org/pdf/2302.11853.pdf})  \\

 \href{https://arxiv.org/pdf/2202.13643.pdf}{Logic Blog 2021} \ \ \ \   \= (Link: \texttt{http://arxiv.org/abs/2202.13643})  \\

 \href{https://arxiv.org/pdf/2101.09508.pdf}{Logic Blog 2020} \ \ \ \   \= (Link: \texttt{http://arxiv.org/abs/2101.09508})  \\
 
 \href{http://arxiv.org/pdf/2003.03361.pdf}{Logic Blog 2019} \ \ \ \   \= (Link: \texttt{http://arxiv.org/abs/2003.03361})  \\

 \href{http://arxiv.org/pdf/1902.08725.pdf}{Logic Blog 2018} \ \ \ \   \= (Link: \texttt{http://arxiv.org/abs/1902.08725})  \\
 
 \href{http://arxiv.org/pdf/1804.05331.pdf}{Logic Blog 2017} \ \ \ \   \= (Link: \texttt{http://arxiv.org/abs/1804.05331})  \\
 
 \href{http://arxiv.org/pdf/1703.01573.pdf}{Logic Blog 2016} \ \ \ \   \= (Link: \texttt{http://arxiv.org/abs/1703.01573})  \\
 
  \href{http://arxiv.org/pdf/1602.04432.pdf}{Logic Blog 2015} \ \ \ \   \= (Link: \texttt{http://arxiv.org/abs/1602.04432})  \\
  
  \href{http://arxiv.org/pdf/1504.08163.pdf}{Logic Blog 2014} \ \ \ \   \= (Link: \texttt{http://arxiv.org/abs/1504.08163})  \\

   \href{http://arxiv.org/pdf/1403.5719.pdf}{Logic Blog 2013} \ \ \ \   \= (Link: \texttt{http://arxiv.org/abs/1403.5719})  \\

    \href{http://arxiv.org/pdf/1302.3686.pdf}{Logic Blog 2012}  \> (Link: \texttt{http://arxiv.org/abs/1302.3686})   \\

 \href{http://arxiv.org/pdf/1403.5721.pdf}{Logic Blog 2011}   \> (Link: \texttt{http://arxiv.org/abs/1403.5721})   \\

 \href{http://dx.doi.org/2292/9821}{Logic Blog 2010}   \> (Link: \texttt{http://dx.doi.org/2292/9821})  
     \end{tabbing}

\vsp

\n {\bf How does the Logic Blog work?}

\vsp

\n {\bf Writing and editing.}  The source files are in a shared dropbox.
 Ask Andr\'e (\email{andre@cs.auckland.ac.nz})  in order    to gain access.

\vsp

\n {\bf Citing.}  Postings can be cited.  An example of a citation is:

\vsp

\n  H.\ Towsner, \emph{Computability of Ergodic Convergence}. In  Andr\'e Nies (editor),  Logic Blog, 2012, Part 1, Section 1, available at
\url{http://arxiv.org/abs/1302.3686}.}

%

%

 
The logic blog,  once it is on  arXiv,  produces citations e.g.\ on Google Scholar.
%

%
 
%

\part{Group theory and its connections to logic}

 \section{Algorithmic questions around the Tits alternative} 
  The 2023 \href{https://sites.google.com/view/nzmri2023?pli=1}{NZRMI summer school}  took place in Nelson, New Zealand. It was organized by Jeroen Schillewaert and Matthew Conder, both based in Auckland.   NZMRI means  ``\href{https://www.nzmri.org/}{New Zealand Mathematical Research Institute}".  There were  six  three-hour tutorials, by Emmanuel Breuillard (Oxford), Martin Bridson (Oxford), Benson Farb (U. Chicago),  Nicolas Monod (EPFL), Anne Thomas (U.\ Sydney), and Amie Wilkinson (U. Chicago).    Here, Andre Nies will discuss  some aspects of   Breuillard's lectures.

They  started out from the Tits alternative. 
 \begin{thm}[Tits, 1972] Let $K$ be a field, and let $\Gamma \le \GL_d(K)$ be finitely generated. Then either $\Gamma$ contains a non-abelian free subgroup, or $\Gamma$ has a solvable subgroup of finite index (i.e.,  $\Gamma$ is  virtually solvable). \end{thm}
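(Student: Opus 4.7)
The plan is to follow Tits's original ping-pong strategy in projective space over a suitably chosen local field. First I pass to the Zariski closure $G = \overline{\Gamma}^{\mrm{Zar}} \le \GL_d$; since $G/G^0$ is finite, $\Gamma_0 := \Gamma \cap G^0$ has finite index in $\Gamma$. If $G^0$ is solvable, then $\Gamma_0$ is solvable and we are done. So I may assume $G^0$ is not solvable, replace $\Gamma$ by $\Gamma_0$, and work with $G$ connected with a nontrivial semisimple part. Choosing a rational representation $\rho \colon G \to \GL(V)$ whose restriction to a simple quotient of $G$ has regular highest weight ensures that $\rho(G)$ admits proximal elements on $\mathbb{P}(V)$.

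Next comes the reduction to a local-field setting. Since $\Gamma$ is finitely generated, the subfield $K_0 \le K$ generated by the matrix entries of a finite generating set is finitely generated over its prime field. By a valuation-theoretic argument of Cassels (a finitely generated infinite field admits absolute values making any prescribed nonzero element arbitrarily large), I find a local field $k$ (finite over $\mathbb{R}$, $\mathbb{Q}_p$, or $\mathbb{F}_p((t))$) and an embedding $K_0 \hookrightarrow k$ under which some chosen $\gamma \in \Gamma$ has a unique eigenvalue of strictly maximal $k$-absolute value. Such a $\gamma$ is then \emph{proximal} on $\mathbb{P}(V_k)$: it has an attracting fixed point $x_\gamma^+$ and a repelling hyperplane $H_\gamma^-$, and high powers of $\gamma$ send the complement of any neighbourhood of $H_\gamma^-$ into any neighbourhood of $x_\gamma^+$.

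Given such a $\gamma$, I use Zariski density of $\Gamma$ in $G$ to pick $\delta \in \Gamma$ placing the attracting/repelling data in general position, i.e.\ $\{x_\gamma^+, \delta \cdot x_\gamma^+\}$ is disjoint from $H_\gamma^- \cup \delta \cdot H_\gamma^-$; this is achievable because the ``bad'' $\delta$ form a proper Zariski-closed subset of $G$. Setting $\gamma' = \delta \gamma \delta^{-1}$ and choosing $n$ large enough, the elements $a = \gamma^n$ and $b = (\gamma')^n$ have pairwise disjoint attracting/repelling neighbourhoods $U_a^\pm, U_b^\pm \subseteq \mathbb{P}(V_k)$ satisfying $a^{\pm 1}(\mathbb{P}(V_k) \setminus U_a^\mp) \subseteq U_a^\pm$, and similarly for $b$. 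The ping-pong lemma then yields $\langle a, b \rangle \cong F_2$, a non-abelian free subgroup of $\Gamma$.

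The main obstacle is step two: producing the first proximal element via a good local-field embedding. Two ingredients must cooperate. On the representation side, one must choose $V$ so that the semisimple part of $G$ acts with an irreducible summand whose dominant weight space is one-dimensional, so that a single eigenvalue becomes genuinely isolated in absolute value under a suitable valuation; this uses the structure theory of semisimple algebraic groups and is where non-virtual-solvability of $G$ really enters. On the valuation side, one must find a place $v$ of $K_0$ separating the relevant eigenvalue ratios of a specific $\gamma \in \Gamma$ into distinct absolute-value classes; this exploits the ``many places'' phenomenon for finitely generated fields. Once these are arranged, selecting $\delta$ in general position is a routine Zariski-density argument, and the ping-pong lemma delivers the conclusion without further surprises.
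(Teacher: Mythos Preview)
The paper does not actually prove this theorem: it is stated as Tits's 1972 result and cited without proof, serving only as background for the algorithmic questions that follow. There is therefore no ``paper's own proof'' to compare against.

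Your outline is the standard Tits strategy (Zariski-closure reduction to a connected group with nontrivial semisimple part, passage to a local field via a well-chosen absolute value on the finitely generated field of matrix entries, production of a proximal element, conjugation into general position, and ping-pong on projective space), and as a high-level sketch it is correct. The places you flag as delicate---the choice of representation so that some eigenvalue is genuinely isolated, and the existence of a suitable place separating eigenvalue ratios---are indeed where the real work lies, and a full proof would need the detailed arguments from Tits's paper or one of the later expositions (e.g.\ Breuillard's lecture notes or the book of Drutu--Kapovich). But since the paper itself offers no argument, there is nothing further to compare.
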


 This  result implies that every f.g.\ torsion linear group is finite, because this is the case for   solvable groups as proved easily by induction          on the solvability rank.

 Breuillard in 2008 obtained a \href{https://arxiv.org/abs/0804.1395}{strong  Tits alternative}: given $d$ there is  $N(d)\in \NN$ such that for each field $K$ and each finite subset $S$ of $\GL_d(K)$ closed under inversion, either $\la S\ra$ is virtually solvable, or $S^{N(d)}$ contains elements $a,b$ generating a free non-abelian subgroup.

 Breuillard   mentioned the following open problem, which essentially  asks whether one  can  decide which side  of the Tits alternative holds.
 
 \begin{question} Given a     field $K$ and $d \ge 3$, is there an algorithm to decide whether non-commuting matrices $A,B \in \GL_d(K)$ generate a free group? \end{question} 
 The question appears to be open even for $K= \QQ$.  This question can be interpreted literally if $K$ is a computable field. It  is a bit harder to  make sense of it  for $K= \mathbb C$, the main case of interest.  In line with computable analysis, the answer will have to be computed  from approximations to $A,B$, such as matrices with entries that are in $\QQ(i)$. 
 
 \subsection{Torsion in $\GL_d(K)$}
Here is  a potentially  simpler question: 
Is there an algorithm to decide whether $A \in \GL_d(K)$ is a torsion element?

For algebraic number fields, the paper \href{https://dl.acm.org/doi/10.1145/164081.164104}{
Deciding finiteness of matrix groups} by Babai, Beals, Rockmore  in ISSAC 93  provides  an algorithm that decides the question in  polynomial time, and contains another   algorithm that does it in  Monte Carlo polynomial time with a more practical exponent. 
%
%
%
%
%
%
%

Let $\chi_A$ denote the characteristic polynomial $\chi_A(t)=\det (A - tI)$. Let $\Phi_k\in \ZZ[X]$ be the $k$-th cyclotomic polynomial. Thus, $\Phi_k$  is the minimal polynomial over $\QQ$ of a primitive $k$-th root of unity (such as $\exp(2\pi i /k)$)  and has degree $\varphi(k)$, where $\phi$ is the Euler totient function.  The following is known. 

\begin{fact} $A\in \GL_d(\CC)$ is torsion $\LR$ $\chi_A$ is a product of cyclotomic polynomials $\Phi_k$, where necessarily $\varphi(k) \le d$. \end{fact}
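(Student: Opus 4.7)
For the forward direction, suppose $A^n=I$ for some $n\ge 1$. Then the minimal polynomial $m_A$ divides $t^n-1=\prod_{k\mid n}\Phi_k(t)$, and since the $\Phi_k$ are pairwise coprime irreducibles over $\QQ$, $m_A$ is a product of \emph{distinct} cyclotomic polynomials $\Phi_k$ with $k\mid n$. By Cayley--Hamilton, $\chi_A$ and $m_A$ have the same irreducible factors over $\QQ$, so $\chi_A=\pm\prod_i\Phi_{k_i}^{e_i}$ for some exponents $e_i\ge 1$, and $\varphi(k_i)=\deg\Phi_{k_i}\le\deg\chi_A=d$ is automatic.

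For the converse, I would first flag a subtlety: the statement as literally written is false, since the unipotent Jordan block $J=\left(\begin{smallmatrix}1&1\\0&1\end{smallmatrix}\right)$ has $\chi_J=\Phi_1^2$ but $J^n=\left(\begin{smallmatrix}1&n\\0&1\end{smallmatrix}\right)\ne I$ for any $n\ge 1$. The correct converse requires $A$ to be semisimple, a hypothesis that is automatic for torsion matrices: using the multiplicative Jordan decomposition $A=SU$ with $S$ semisimple, $U$ unipotent, and $SU=US$, the equation $A^n=I$ together with uniqueness of the decomposition gives $U^n=I$, and in characteristic zero $I$ is the only unipotent element of finite order (write $U=I+N$ with $N$ nilpotent; expanding $(I+N)^n=I$ and multiplying by an appropriate power of $N$ forces $N=0$). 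Assuming semisimplicity, $A$ is conjugate to a diagonal matrix whose entries are roots of unity of some orders $n_1,\dots,n_d$, so $A^N=I$ for $N=\mathrm{lcm}(n_1,\dots,n_d)$.

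The principal obstacle is not in any individual step but in patching the false converse. I would therefore recommend stating the Fact in its cleaner equivalent form: \emph{$A\in\GL_d(\CC)$ is torsion iff $m_A$ is a product of distinct cyclotomic polynomials $\Phi_k$} (with $\varphi(k)\le d$ then automatic). This variant is literally correct and algorithmically convenient, since squarefreeness of $m_A$ doubles as the semisimplicity test that the $\chi_A$-version tacitly requires; equivalently, one may retain $\chi_A$ but add the explicit hypothesis that $A$ be diagonalizable.
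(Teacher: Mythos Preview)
The paper does not prove this Fact; it merely asserts it as ``known'', so there is no paper argument to compare against. Your observation that the converse fails as literally stated is correct, and the unipotent Jordan block is the right counterexample.

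There is, however, a parallel gap in your forward argument that you have not flagged. You claim that $m_A$ is a product of distinct $\Phi_k$'s because these are the irreducible factors of $t^n-1$ over~$\QQ$, and then pass to $\chi_A$ via the shared irreducible factors over~$\QQ$. But for $A\in\GL_d(\CC)$ neither $m_A$ nor $\chi_A$ need lie in $\QQ[t]$ at all. Concretely, $A=\mathrm{diag}(1,i)\in\GL_2(\CC)$ has order~$4$, yet $\chi_A=(t-1)(t-i)$ has non-rational coefficients and is not a product of cyclotomic polynomials. So the forward implication is also literally false over~$\CC$, and your proposed repair via $m_A$ inherits the same defect: the $1\times 1$ matrix $(i)$ is torsion with $m_A=t-i$, not a cyclotomic polynomial. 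The Fact (and the algorithm following it) tacitly has in mind matrices with $\chi_A\in\QQ[t]$, for instance $A\in\GL_d(\QQ)$; under that hypothesis your forward argument is valid, your Jordan-block counterexample still kills the converse, and your $m_A$-based reformulation is the correct fix.
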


Since one can compute $\Phi_k$, and $\sqrt {k/2} \le \varphi(k)$ for  each $k\ge 1$, this gives   the desired algorithm when the coefficients of $A$ are taken from some computable field $K$ that is a subfield  of $\CC$ (i.e., $K$ is computable and has characteristics $0$): on input $A \in \GL_d(K)$, 

\bi 
\item[1.] compute $\chi_A$.

\item [2.] see if it can be written as a product of $\Phi_k$'s where  $k  \le 2d^2$. If so answer yes. \ei

Note that this also gives a crude upper bound on the torsion as a function of $d$, something like $2^d \cdot (d^{2d})$. Knowing that, as an  algorithm to determine whether $A$ is torsion, we can  try the various exponents up to this bound. Note that  this is far from efficient. 

Is there a better bound on the  torsion?  
 
 
This 1997  \href{https://www.ams.org/journals/proc/1997-125-12/S0002-9939-97-04283-4/S0002-9939-97-04283-4.pdf}{paper by Shmuel Friedland} shows that  up to conjugation, the  group  O(d) of orthogonal $d \times d$ matrices over the integers is the maximal finite subgroup of $GL_d(\QQ)$,  for large d. Also, its order $2^d d!$  is the maximal order of a finite subgroup,  with the exception of $d= 2,4,6,7,8,9,10$.  In the exceptional  cases, the maximum has been obtained by Feit, based on Weisfeiler's tight estimate for the Jordan function, in a  preprint called ``The orders of finite linear groups",  1995, which has not   appeared.  See these \href{https://www.college-de-france.fr/media/jean-pierre-serre/UPL3821667391778701726_6___Bounds_for_the_orders.pdf}{2006 notes by Serre}, where the work of Weisfeiler-Feit is called ``incomplete".
 
 If it works,  then for $d =3,5 $ or $d > 10$,  the maximum order is   $2L(d)$, where $L(d)$ is the maximum order of an element of the  symmetric group $S_d$. (This follows because $O(d) \cong  C_2^d \rtimes S(d)$.)  For the value of $L(d) $,  see this \href{https://www.jstor.org/stable/2322839?seq=1#metadata_info_tab_contents}{paper by William Miller}. It surveys= a proof that 
\bc  $\log  L(d) \sim \sqrt {d \log d}$, \ec where $\log$ is the natural logarithm. Very roughly, this says that   $L(d)$ is  the square root of the size of $S(d)$, which is about $\exp{(d \log d)}$.

 \def\C{{\mathrm C}}
 \section{Khelif,Wilson: On ultraproducts of alternating groups}
  Gabriel Sabbagh asked  whether an infinite pseudo-finite group can be finitely generated. For some background  see \cite[Section~6]{LogicBlog:20}. 
  
  This section  is based on notes by Anatole Khelif related to this problem. John Wilson and Nies discussed these by email. The   full explication below is due to   Wilson. 
 Note that pseudofinite groups are precisely the groups elementarily equivalent to an ultraproduct of finite groups.  The main result~is: 
\begin{theorem} No infinite finitely generated group is elementarily equivalent to an ultraproduct of a family of finite alternating groups. \end{theorem}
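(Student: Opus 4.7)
The plan is to derive a contradiction by exhibiting first-order properties of the family $(A_n)_{n\ge 5}$ that are incompatible with $G$ being simultaneously infinite and finitely generated. The first move is a standard reduction: if the ultrafilter $\mathcal U$ concentrates on a bounded set $\{i : n_i \le N\}$, the ultraproduct is isomorphic to a single finite $A_n$, so any group elementarily equivalent to it is finite. Hence we may assume $n_i \to \infty$ along $\mathcal U$, and the ultraproduct $U := \prod_{\mathcal U} A_{n_i}$ is an infinite pseudofinite group whose complete first-order theory $T$ is inherited by $G$.

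Next I would collect first-order consequences of $T$ that constrain $G$: trivial centre; every element is a commutator (Ore's conjecture, classical for alternating groups); and, for each $k$, the existence of $k$ pairwise commuting elements of order $3$ with all non-trivial products non-identity, giving elementary abelian $3$-sections of unbounded rank. The most useful structural fact is the control on centralizers of $3$-cycles: in $A_n$ the centralizer of a $3$-cycle is essentially $C_3$ times a smaller alternating group. Iterating this produces a first-order definable descending chain of subgroups in $U$, each inheriting (up to bounded parameters) a theory close to $T$; the chain transfers verbatim into $G$.

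The heart of the proof, and the hardest step, is converting this structure into a contradiction with finite generation. The intuition is that a fixed generating tuple $g_1,\dots,g_d \in G$ can realise only boundedly many first-order types over itself, whereas the iterated centralizer chain reflects the natural $A_n$-action on the unbounded set $\{1,\dots,n\}$ and therefore produces definable sections of unbounded ``combinatorial dimension''. Concretely, one would try to show that $g_1,\dots,g_d$ must all lie in some proper definable subgroup obtained from the chain — for instance, the joint centralizer of a definable family of ``moving points'' — forcing $G$ to be properly contained in $G$, which is absurd.

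The main obstacle is that $U$ is \emph{not} simple in the usual sense: elements requiring unboundedly many conjugates of a fixed non-identity element to be written as products generate proper non-trivial normal subgroups. Hence one cannot invoke Wilson's classification of infinite simple pseudofinite groups directly. Instead, the argument presumably proceeds by isolating the bounded-width part of $U$ and analysing its quotient, exploiting features specific to alternating groups — such as Landau's bound $\exp(O(\sqrt{n\log n}))$ on element orders, which is minuscule compared with $|A_n|\approx n!/2$, and the fact that conjugacy classes of $A_n$ are first-order distinguishable by cycle-type invariants — to produce the final clash with finite generation.
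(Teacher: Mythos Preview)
Your proposal has a genuine gap at exactly the point you flag as ``the heart of the proof''. You never supply a mechanism that turns finite generation into a contradiction; the suggestion that the generators $g_1,\dots,g_d$ must land in a proper definable subgroup is unsupported (and cannot work as stated: a generating set by definition lies in no proper subgroup, so you would have to produce a first-order sentence forcing this, and nothing in your centralizer-chain analysis does that). The observations about centralizers of $3$-cycles, Landau's bound, and bounded-width normal subgroups are all correct but do not assemble into an argument; the proposal remains a list of heuristics with the decisive step missing.

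The paper's route is quite different and worth knowing. First it shows that, uniformly for $n\ge 9$, the three-sorted structure $(A_n,\{1,\dots,n\},P_n)$ --- group, domain with the natural action, and the full Boolean algebra of subsets --- is bi-interpretable in $A_n$ without parameters (your centralizer analysis of $3$-cycles is precisely what is used to recover the domain). Passing to the ultraproduct, $G$ therefore acts transitively on the atoms of a $G$-definable infinite atomic Boolean algebra $L$. Now finite generation is used in a completely different way: a finite symmetric generating set $S$ defines a Cayley-type graph on the atoms of $L$, and one can speak first-order (with the parameters $S$) about finite elements of $L$ that are ``connected'' in this graph and are minimal connected for a given pair of atoms. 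These minimal elements are exactly shortest paths, their equivalence classes under the preorder $a\preceq b \Leftrightarrow \exists g\,(ga\le b)$ are parametrized by their number of atoms, and the quotient order is isomorphic to $(\mathbb N,\le)$. Since $(\mathbb N,\le)$ is finitely axiomatizable and infinite, it cannot be interpreted in a pseudofinite structure, contradiction. The missing idea in your attempt is this interpretation of $(\mathbb N,\le)$ via connectivity in the Boolean algebra; it is the only place finite generation enters, and it replaces your unproven ``generators fall into a proper subgroup'' step.
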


To prove this, 
we require two lemmata.  
 For the definition and properties of bi-interpretability we refer the reader to \cite[p.\ 222] {Hodges:93}.   We write $P_n$ for the power set of the set $\{1,\dots,n\}$.

\begin{lemma}
  For $n \geq 9$, the alternating group $A_n$ and the three-sorted structure 
$(A_n , \{1\dots,n\}, P_n)$, with the group operations on $A_n$, the action of $A_n$ on $ \{1\dots,n\}$, the element relation between $\{1\dots,n\}$ and sets in $ P_n$ and the Boolean algebra operations on $P_n$, are bi-interpretable,  in a uniformly way, without parameters.   \end{lemma}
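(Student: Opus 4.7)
The statement asserts bi-interpretability between $A_n$ and the three-sorted structure $M = (A_n,\{1,\dots,n\},P_n)$, uniformly for $n \ge 9$ and without parameters. One direction is immediate: $A_n$ is literally a sort of $M$ with its group operations in the signature, so the identity interpretation works. The substance of the lemma is the parameter-free interpretation of $M$ in $A_n$, whose heart is interpreting the sort $\{1,\dots,n\}$; once points are available, the power set $P_n$ is coded by $n$-tuples of points modulo ``same underlying set,'' with membership and Boolean operations defined pointwise, and that last step is routine.

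The plan is to code points via $3$-cycles. The set $T$ of $3$-cycles in $A_n$ is uniformly first-order definable: for $n \ge 9$, a $3$-cycle is characterized as an element $\sigma$ of order $3$ whose centralizer has a non-abelian simple normal subgroup of index $3$ (namely the $A_{n-3}$ factor in $C_{A_n}(\sigma) = \langle\sigma\rangle \times A_{n-3}$). The hypothesis $n \ge 9$ ensures both that $A_{n-3}$ is non-abelian simple and that competing order-$3$ elements (such as products of two or more disjoint $3$-cycles) have centralizers lacking this structural feature. A point $i \in \{1,\dots,n\}$ is then represented by an equivalence class of pairs $(\sigma,\tau) \in T^2$ whose supports meet in exactly one element --- equivalently, whose product $\sigma\tau$ has order $5$ --- under the relation $(\sigma,\tau) \sim (\sigma',\tau')$ meaning that the two pairs' common support points coincide. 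The $A_n$-action on points is then conjugation: $g \cdot [\sigma,\tau] = [g\sigma g^{-1},\, g\tau g^{-1}]$, which manifestly respects $\sim$. That the composition of the two interpretations is definably isomorphic to the identity follows from two classical facts: for $n \ge 7$ with $n \ne 6$, point stabilizers form the unique $A_n$-conjugacy class of maximal subgroups of index $n$, so the natural action is intrinsic; and $\Aut(A_n) = S_n$ for $n \ne 6$, ruling out any outer automorphism scrambling the recovered points. The canonical bijection sends $[\sigma,\tau]$ to the unique element of $\mathrm{supp}(\sigma) \cap \mathrm{supp}(\tau)$.

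The main obstacle is giving a parameter-free first-order formula for $\sim$: we must decide from the group operation alone when $\mathrm{supp}(\sigma) \cap \mathrm{supp}(\tau) \cap \mathrm{supp}(\sigma') \cap \mathrm{supp}(\tau') \ne \emptyset$, given that each of the two pairwise intersections $\mathrm{supp}(\sigma) \cap \mathrm{supp}(\tau)$ and $\mathrm{supp}(\sigma') \cap \mathrm{supp}(\tau')$ is already known to be a singleton. This is resolved by a finite case analysis of cycle types (equivalently, orders and centralizer structures) of the pairwise products $\sigma\sigma',\sigma\tau',\tau\sigma',\tau\tau'$, possibly aided by auxiliary $3$-cycles $\rho \in T$ witnessing that the common point of $(\sigma,\tau)$ also lies in $\mathrm{supp}(\sigma') \cap \mathrm{supp}(\tau')$. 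The hypothesis $n \ge 9$ affords enough room for these invariants to discriminate unambiguously between ``same common point'' and ``distinct common points,'' and the resulting formula is uniform in $n$.
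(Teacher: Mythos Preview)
Your route via $3$-cycles is genuinely different from the paper's. The paper instead singles out double transpositions---order-$2$ elements $u$ with $|B_u|=4$, where $B_u$ denotes the support---by a centralizer condition it calls the $V$-property, shows that $C_u''=A_{B_u'}$ is uniformly definable, and for two such $u,v$ obtains $\langle C_u'',C_v''\rangle=A_{(B_u\cap B_v)'}$ via a double-centralizer-plus-commutator trick. Point stabilizers are then exactly the maximal proper subgroups of this shape, so points are recovered directly as subgroups and no analogue of your relation $\sim$ is ever needed. Subsets are coded as $\mathrm{Fix}(g)\cup\mathrm{Fix}(h)$ for pairs $(g,h)\in A_n^2$, which is uniform in $n$.

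Your proposal has two real gaps. First, coding $P_n$ by ``$n$-tuples of points'' is not uniform: an interpretation must use formulas of a fixed arity independent of the model, and here the arity would grow with $n$; you need a fixed-arity device such as the paper's. Second---and this is what you yourself flag as the main obstacle---you never actually define $\sim$. The promised case analysis on the orders of $\sigma\sigma',\sigma\tau',\tau\sigma',\tau\tau'$ does not work: with $\sigma=(1\,2\,3)$, $\tau=(1\,4\,5)$, $\sigma'=(1\,6\,7)$, $\tau'=(6\,2\,4)$ all four products are $5$-cycles yet the common points are $1$ and $6$; replacing $(\sigma',\tau')$ by $((1\,6\,7),(1\,8\,9))$ again gives four $5$-cycles but now both common points are $1$. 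Your auxiliary-$\rho$ escape also fails at $n=9$, where the two $5$-element support-unions can cover $\{1,\dots,9\}$ entirely, leaving no fresh points for $\rho$. A uniform formula for $\sim$ does exist---essentially because the combinatorial type of four $3$-element subsets of $\{1,\dots,n\}$ takes only boundedly many values---but extracting it in the group language is exactly the work you have not done, and it is the heart of the lemma.
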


\begin{proof}  It suffices to prove that $(A_n , \{1\dots,n\}, P_n)$ is uniformly interpretable in $A_n$.

For a subset $X$ of $\{1,2,\dots,n\}$ write $A_X$ for the subgroup of $A_n$ that acts as the alternating group on $X$ and fixes the complement $X'$ pointwise.  For $u\in A_n$ write $B_u$ for the support $\{i\in\{1,\dots,n\}\mid ui\neq i\}$ of $u$ and $C_u$ for the centralizer $\C_{A_n}(u)$ of $u$ in $A_n$.  We write $C_u'$ for the derived group (commutator subgroup) of $C_u$, and $C_u''$ for the derived subgroup of $C_u'$.  

We say that $u$ has the $V$-property if $u$ has order $2$ and exactly four elements of $\C_u$ commute with all of their conjugates under elements of $C_u$.  We claim that $u$ has the $V$-property if and only $u$ has order $2$ and $|B_u|=4$, and that if this holds, then $C_u''=A_{B_u'}$.  Since the elements $u$ of order $2$ are products of disjoint transpositions and are determined up to conjugacy by $B_u$, it suffices to consider permutations $u=(1,2)(3,4)\dots(4k-1,4k)$ for positive integers $k$.  An element $A_n$
lies in $C_u$ if and only if it permutes the sets $\{1,2\},\{3,4\},\dots,\{4k- 1,4k\}$.  Therefore each product of an even number of the transpositions $(1,2),(3,4),\dots,(4k-1,4k)$ lies in $C_u$ and commutes with all conjugates by elements $C_u$.  Since there are $k(2k-1)$ products with two factors,  if $u$ has the $V$-property then $k=1$ and $|B_u|=4$.  
    
Now let $u=(1,2)(3,4)$.
Let $D$ be the dihedral subgroup $\langle(1,3,2,4),(1,2)\rangle$ of order $8$ in $S_n$, and $S$ the subgroup of $S_n$ fixing $\{1,2,3,4\}$ pointwise.  Then $S\cong S_{n-4}$ and, since $\C_{S_n}(u)$ maps $\{1,2,3,4\}$ to itself, $C_u$ is the subgroup of $D\times S$ consisting of pairs $xy$ with $x\in D,y\in S$ and $x,y$ both even or both odd.  
Since $S_{n-4}$ has no non-trivial abelian normal subgroups, consideration of the projection map from $C_u$ to $S$ shows that the elements of $C_u$ commuting with all conjugates in $C_u$ lie in $D$, and, being even, they are the elements of $\langle(1,2)(3,4),(13),(2,4)\rangle$.  Thus $|B_u|=4$ and $u$ has the $V$-property.  Moreover, $C_u'=D'\times A_{\{1,2,3,4\}'}$ and the second derived group $C_u''$ of $C_u$ is $A_{\{1,2,3,4\}'}$.  Our claim follows.  
We note also that, since every element of $A_{n-4}$ is a commutator, $C_u''$ is the set of iterated commutators $[[x_1,x_2],[x_3,x_4]]$ with
$x_1,\dots,x_4$ in $C_u$ and so $C_u''$ is definable in $A_n$ with parameter $u$, uniformly in $n$. 

Let $u,v$ be elements of $A_n$ with the $V$-property.  From above, $\langle C_u'' , C_v''\rangle$ fixes the
elements of $B_u \cap B_v$ and indeed it easy to check that it is exactly the subgroup of $A_n$ fixing the elements of $B_u \cap B_v$; that is,
$$\langle C_u'' , C_v''\rangle=A_{(B_u\cap B_v)'}.$$    
We claim next that the above subgroup is definable with parameters $u,v$ uniformly in $n$.  From above we can assume that $B_u\neq B_v$,
and therefore that $|B_u\cap B_v|\leq3$. 
For $H\subseteq A_n$ write $\C(H)$ for $\C_{A_n}(H)$.  Note that if $X\in P_n$ and $|X|\geq 5$ then $\C(A_X)=A_{X'}$.  Now
\vskip-.15in 
$$\C(A_{B_u'})\cap \C(A_{B_v'})=A_{B_u}\cap  A_{B_v}=A_{B_u\cap B_v}.$$ So 
$\C(\C(A_{B_u'})\cap \C(A_{B_v'}))=\C(A_{B_u\cap B_v})$ contains $A_{(B_u\cap B_v)'}$ and is contained in the direct product of the symmetric group on $(B_u\cap B_v)'$ and a group of permutations of $B_u\cap B_v$.   Since $S_3$ is soluble of derived length $2$ and for $m\geq5$ every element of $A_m$ is a commutator, it follows that \vskip-.15in 
$$A_{(B_u\cap B_v)'}=\{[[x_1,x_2],[x_3,x_4]]\mid x_1,\dots,x_4\in\C(\C(A_{B_u'})\cap \C(A_{B_v'}))\}.$$
Our claim follows.

The maximal proper subgroups of $A_n$ of the form $A_{(B_u\cap B_v)'}$ are exactly the point stabilizers in $A_n$. They can be identified with the corresponding elements of $\{1,\dots,n\}$ and then the conjugation action of $A_n$ on the stabilizers becomes the natural action of $A_n$ on $\{1,\dots,n\}$. The elements of $P(\{1,\dots,n\})$ can be defined as the fixed-point sets of elements of $A_n$ or, in the case of subsets with complement containing at most two elements, the union of two such sets.
The lemma follows.
\end{proof}

\begin{lemma} \label{lem:second}  Let $G$ be a finitely generated group and let $L$ be an atomic infinite Boolean algebra with least element $0$ on which $G$ acts as lattice automorphisms. Assume that the restriction of the action to the set of atoms is transitive and that for all $a,b \in L$, there is $g \in G$ such that
  $ga \leq b$ or $gb\leq a$.
Then $(\NN,\leq)$ is interpretable in $(G,L)$ with parameters (the parameters being a finite generating set for $G$). \end{lemma}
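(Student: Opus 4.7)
The plan is to use the $G$-orbit structure on $L$ together with the orbital comparability hypothesis to carve out a definable copy of $(\NN, \leq)$. Define first-order formulas in $(G, L)$:
\[
F(a, b) \;\equiv\; \exists g \in G\,(g \cdot a \leq b), \qquad E(a, b) \;\equiv\; F(a, b) \wedge F(b, a).
\]
Note that $F$ is a preorder (reflexivity via $g = e$, transitivity via composition in $G$) and, by the main hypothesis, is total on $L$. Hence $E$ is an equivalence relation and $(L/E, F/E)$ is a linearly ordered set whose least element is $[0]_E$.

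Next I would set up a canonical $\omega$-chain inside $(L/E, F/E)$. By atomicity plus transitivity on the set $A$ of atoms, the atoms form a single $E$-class $[c]_E$, which is the unique cover of $[0]_E$: if $a \neq 0$ and $F(a, c)$ holds for an atom $c$, then some $g \cdot a \leq c$ is nonzero, forcing $g \cdot a = c$, and hence $a$ itself is an atom. Inductively, for each $n \in \NN$ any two elements of $L$ having exactly $n$ atoms below them lie in the same $E$-class $[n]_E$: totality of $F$ gives (wlog) $g$ with $g \cdot a \leq b$, and since $g$ is an automorphism we have $|g \cdot A_a| = n = |A_b|$, so $g \cdot a = b$. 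Moreover the chain $[0]_E < [1]_E < [2]_E < \cdots$ is strict, since no automorphism can map an $(n{+}1)$-atom set into an $n$-atom set. This is our candidate copy of $(\NN, \leq)$.

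The main technical obstacle is to isolate the chain $\{[n]_E : n \in \NN\}$ as a first-order definable subset of $L/E$, using the generators $g_1, \ldots, g_k$ of $G$ as parameters. If $L$ admits elements with infinitely many atoms below them, then $(L/E, F/E)$ extends strictly beyond our chain, and naive criteria (e.g.\ ``every $[b]_E \leq [a]_E$ with $[b]_E \neq [0]_E$ has an immediate predecessor'') need not suffice: in pseudofinite-like Boolean algebras every nonzero class has a predecessor, so such a condition fails to separate standard classes from nonstandard ones. I expect the correct approach to use the generators to define a first-order successor operation on $L/E$ (sending $[a]_E$ to the class of $a \vee c$ for an atom $c \not\leq a$, well-defined by the $n$-atom uniqueness argument above), and then to characterize the standard chain via a property of its predecessor structure that only finitely generated $G$ can witness. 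Converting this intuition into a crisp first-order formula referring only to $g_1, \ldots, g_k$ as parameters is the crux of the proof, and the only place where finite generation of $G$ is essentially used.
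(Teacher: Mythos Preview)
Your setup is correct and matches the paper exactly: the preorder $F$, the equivalence $E$, the fact that elements with the same finite number of atoms form a single $E$-class, and the observation that the obstacle is cutting down to the \emph{finite} classes. But you then stop precisely at the crux, and your own suggested route (a definable successor plus some predecessor-structure criterion) cannot work, for the reason you yourself give: in a pseudofinite-flavoured Boolean algebra every nonzero class has an immediate predecessor, so no first-order property of the successor/predecessor relation alone separates standard from nonstandard classes. So as written this is not a proof but a reduction to the hard step, which you leave open.

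The missing idea is to use the finite generating set $S$ geometrically rather than arithmetically. Put a graph structure on the set of atoms by declaring $u$ adjacent to $v$ when $su=v$ for some $s\in S$; transitivity of the action makes this graph connected, and finiteness of $S$ makes it locally finite. Call $a\in L$ \emph{connected} if for every partition $a=c\vee d$ with $c\wedge d=0$ and $c,d\neq 0$ there are atoms $u\le c$, $v\le d$ and $s\in S$ with $su=v$; this is first-order with parameters $S$. Now for any pair of atoms $s,t$ define $a$ to be \emph{minimal connected for $s,t$} if $a$ is connected, $s,t\le a$, and $a\preceq b$ for every connected $b$ with $s,t\le b$. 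Such $a$ exist (a shortest path in the atom-graph is one) and necessarily have only finitely many atoms; conversely, since the atom-graph is infinite and locally finite, shortest paths attain every finite length. The definable set of minimal connected elements (as $s,t$ range over atoms), modulo $E$ and ordered by $F$, is therefore exactly your chain $[1]_E<[2]_E<\cdots$, giving the interpretation of $(\NN,\le)$.
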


\begin{proof}
Let $S$ be a finite symmetric generating subset of $G$.
We say that an element $a\in L$ is {\em connected} if for all $c,d\neq0$ with $c \wedge d = 0$
 and $c \vee d = a$ there are atoms $u$, $v$ and $s \in S$ with the following property: $u \leq c$, $v \leq d$ and $sc = d$.

We define a preorder on $L$ as follows: for $a,b\in L$ write
$a\preceq b$ if and only if there exists $g\in G$ such that
$ga \leq b$.  Then $\preceq$ induces an equivalence relation $\sim$ on $L$ and a total order $\leq$ on the quotient set $L/{\sim}$: for $a,b\in L$ we have
$a\sim b$ if and only if $a\preceq b$ and $b\preceq a$; and for equivalence classes ${\bf c}, {\bf d}$ we have ${\bf c}\leq {\bf d}$ if and only if there are $c\in \bf c$ and $d\in \bf d$ such that $c\leq d$.

Let $s$ and $t$ be atoms of $L$.
Since $G$ acts transitively on the atoms, there exists a connected element $a\in L$
containing finitely many atoms such that $s \leq a$ and $t \leq a$.
Then we can consider a minimal (for $\leq$) connected element $a\in L$ such that $s \leq a$ and $t \leq a$.
Such an element can be regarded as a finite sequence $a_1, \dots, a_n$ such that for any $i\in
\{1,\dots,n-1\}$, there is $s_i \in S$ such that $s_i(a_i) = a_{i+1}$.  Since $G$ is finitely generated and acts transitively on the (infinitely many) atoms of $L$, such minimal sequences can be arbitrarily long.

We claim next that if $u,v$ are elements of $L$ that  contain only finitely many atoms, then they are equivalent if and only if they have the same number of atoms.
Without loss of generality, we can suppose that $gu \leq v$ for some $g\in G$. It is obvious that $v$ contains at least as many atoms as $u$.
If $u$ and $v$ are equivalent, by symmetry we deduce that they contain the same number of atoms.
Conversely, if $u$ and $v$ contain the same (finite) number of atoms then $v \setminus gu$ does not contain any atom. Since $L$ is atomic, it follows that $gu = v$. Our claim follows.

We conclude that the set of equivalence classes of such minimal connected elements with the order induced by $\leq$ is isomorphic to $(\NN,\leq)$. 
\end{proof}

\begin{proof}[Proof of the theorem]
We suppose that the theorem is false.
Let $G$ be a group elementarily equivalent to an infinite ultraproduct of alternating groups $A_n$; then the integers $n$ are unbounded on each set in the underlying ultrafilter.  By Lemma 1, this group acts transitively on the atoms of an infinite atomic $G$-definable Boolean algebra (that is elementarily equivalent to an ultraproduct of sets $P_n$ with $n$ unbounded on sets in the ultrafilter).  
 
By Lemma 2, if $G$ is also finitely generated then $(\NN,\leq)$ is interpretable in $G$.
 Since  $(\NN,\leq)$   is a finitely axiomatisable infinite
structure,  a pseudofinite group cannot interpret it. This is a contradiction.
\end{proof}

\section{Khelif, Nies: f.g.\ groups acting on Boolean algebras}

This section contains another version of the second lemma in Khelif's notes, already discussed as Lemma~\ref{lem:second} above. The proof of this version is explicated by Nies.
 
\begin{lemma}  \label{prop:Khelif} Let $G$ be a f.g.\ group acting from the left on an infinite atomic Boolean algebra $A$,  canonically identified with a subalgebra of the powerset of the set of atoms. Suppose that for each $n \in \NN$, the action is transitive for elements  of   cardinality $n$.

Then $(\NN, \le)$ can be interpreted in $(G,A)$ with parameters.

In particular, $G$  fails to be  pseudofinite. \end{lemma}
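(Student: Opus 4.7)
The approach closely parallels the proof of Lemma~\ref{lem:second}. I will fix a finite symmetric generating set $S\subseteq G$; these will serve as the parameters of the interpretation. I first introduce the $\emptyset$-definable preorder $a\preceq b :\Leftrightarrow \exists g\in G\,(ga\le b)$ and its associated equivalence $\sim$. The transitivity hypothesis (on each cardinality class) gives, restricted to finite elements, that $a\preceq b$ holds iff $|a|\le|b|$ and $a\sim b$ iff $|a|=|b|$, where $|a|$ denotes the number of atoms below $a$. Hence $\sim$-classes of finite elements correspond bijectively to $\NN^+$ via cardinality, with $\preceq$ inducing the usual order.

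Next I isolate finite elements by definable means. Call $a\in A$ \emph{$S$-connected} if for every decomposition $a=c\vee d$ with $c,d\ne 0$ and $c\wedge d=0$ there exist atoms $u\le c$, $v\le d$ and $s\in S$ with $su=v$; this expresses connectedness of the atoms of $a$ as a subgraph of the action graph $\Gamma_S$ on atoms (edges $u\to su$ for $s\in S$). Call $a$ \emph{minimally $S$-connected} if $a$ is $S$-connected and there exist atoms $s,t\le a$ such that no $a'$ with $s,t\le a'$ and $a'<a$ is $S$-connected. Both notions are first-order in $(G,A)$ with parameter $S$, since atoms are $\emptyset$-definable and $S$ is finite. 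Any $S$-connected element containing atoms $s,t$ also contains a finite $\Gamma_S$-path of atoms joining $s$ to $t$; so every minimally $S$-connected element is finite and coincides with a shortest $\Gamma_S$-path between its witnessing atoms, having cardinality $d_{\Gamma_S}(s,t)+1$.

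Since $A$ is infinite and atomic, its atom set is infinite; $\Gamma_S$ is vertex-transitive (by transitivity on atoms), locally finite of degree at most $|S|$, connected, and infinite, hence has infinite diameter. Every $n\in\NN$ is therefore realised as a $\Gamma_S$-distance, producing minimally $S$-connected elements of every positive cardinality. The interpretation of $(\NN,\le)$ in $(G,A)$ will then take as universe the $\sim$-classes of minimally $S$-connected elements, ordered by $\preceq$, and realises $(\NN^+,\le)\cong(\NN,\le)$. The pseudofiniteness conclusion follows as in the previous section: in the intended settings the $G$-action on $A$ is a first-order enrichment of $G$, and $(\NN,\le)$ is a finitely axiomatisable infinite structure, hence not interpretable in any pseudofinite structure. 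The main obstacle I anticipate is verifying that minimally $S$-connected elements are indeed finite; this reduces to the graph-theoretic observation that two atoms in an $S$-connected element are always joined by a finite path of atoms staying inside that element, after which minimality forces coincidence with a shortest such path.
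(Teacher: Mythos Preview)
Your approach matches the paper's: introduce $\preceq$, relate it to cardinality on finite elements via the transitivity hypothesis, define $S$-connectedness, and interpret $(\NN,\le)$ as the $\preceq$-quotient of the minimal connected elements; the shortest-path argument for realising every cardinality is also the same. There is, however, a real gap in your finiteness step, caused by one departure from the paper.

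You call $a$ minimally $S$-connected when no $a'<a$ in the \emph{Boolean order} containing the witnesses $s,t$ is $S$-connected. The paper instead declares $a$ \emph{minimal connected for $u,v$} when $a\preceq b$ for every connected $b$ with $u,v\le b$ --- minimality in the \emph{preorder} $\preceq$. With the paper's definition, finiteness is automatic: the shortest-path element $d$ is connected and contains $u,v$, so $a\preceq d$, hence some $g\cdot a\le d$, and since $d$ is finite so is $a$.

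With Boolean-order minimality this shortcut is unavailable, since the shortest path $d$ need not satisfy $d\le a$. You instead claim that every $S$-connected $a$ contains a path from $s$ to $t$ within $a$, i.e.\ that $S$-connectedness forces $\Gamma_a$ to be connected. But the natural argument --- decompose $a$ as $c\vee(a\setminus c)$ with $c$ a $\Gamma_a$-component --- needs $c\in A$, which is guaranteed when $a$ is finite but not in general. If, say, $A$ consisted only of the finite and cofinite subsets of the atom set and $a$ were cofinite with all $\Gamma_a$-components infinite, then every decomposition of $a$ available in $A$ has one side finite, and that finite side (not containing any full component) always carries an edge to its complement; such an $a$ is $S$-connected in your sense despite $\Gamma_a$ being disconnected. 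Your anticipated reduction is therefore circular: it presumes the finiteness you are trying to prove. Switching to $\preceq$-minimality, as the paper does, removes the difficulty.
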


\begin{proof} Let $S $ be a finite symmetric generating set of $G$. We use letters $a,b,c,d$ for elements of $A$, and $u,v,w$ for atoms. We write \bc $a \preceq b \LR \ex g \in G \, [g\cdot a \le b]$.  \ec
If $a$ is finite then by the transitivity hypothesis, 
\bc $|a| \le |b| \LR a \preceq b$. \ec 

We say that $a$ is \emph{connected} if for each $c$ such that $0< c < a$, there are $s \in S$ and  and atom $u \le c$ such that $su \wedge c = 0$. 
Given $a \in A$, let $\Gamma_a$ be the graph on the elements of $a$ (i.e., atoms below $a$) with the edge relation 
\bc $Euv \LR \ex s \in S [ s\cdot u= v] $. \ec Note that the    graph $\Gamma_1$, where $1$ is the largest element of $A$,  is connected since $G$ is generated by $S$ and the action on the atoms is transitive. 

\begin{claim} Let $a$ be finite. Then $a$ is connected $\LR$ $\Gamma_a$ is connected. \end{claim}

For the implication  right to left: one can choose atoms $v \le c$ and $w \le a \setminus c$. There is a path from $v$ to $w$, which at some point leaves $c$. 

For the   implication left to right: if $\Gamma_a$ is not connected, there is an atom $v\le a$ such that its connected component does not contain some atom $w$. Since $a$ is finite, its connected component $c$ exists in $A$, and satisfies $c< a$. But there can be no atom $u \le c$ as in the definition of an element being connected.

Let $a$ be connected and suppose that $u,v \le a$ are atoms. We say that $a$ is \emph{minimal connected for $u,v$} if $a \preceq b$ holds for any connected $b$ such that $u,v \le b$. 
\begin{claim} Let $u,v$ be atoms. Let $d $ be the set of atoms in a path   between  $u$ and $v$  of shortest length in the whole graph $\Gamma_1$. 
\bi \item[(a)] The set $d$ is minimal connected for $u,v$.
\item[(b)] If $a$ is minimal connected for $u,v$ then $|d| = |a|$.  \ei
\end{claim}

For (a), note that clearly $d$ is connected. If $|c| < |d|$ and $u,v \in c$ then by Claim 2 and the assumption that the path was shortest,  $c$ is not connected for $u,v$. Hence for any connected $c$ such that $u,v \in c$, we have $d \preceq c$ because $c$ contains at least $|d|$ many atoms. 

For (b), note that by definition we have $a \preceq d$ and hence  $|a| \le |d|$. By (a) this implies that $|a| = |d|$.  This shows the claim. 

Now let $\+ F$ be the set of $a$ that are minimal connected for some atoms $u,v$. Then $\+ F$ contains only finite sets, and contains sets of each finite  cardinality because $A$ is infinite. So the quotient partial order of the pre-order $( \+ F, \preceq)$, which can be interpreted in $(G,A)$ with parameters from $S$,  is isomorphic to $(\NN, \le)$.  
\end{proof}

\section{Nies: The meet groupoid of an oligomorphic group}
\begin{definition} 
\label{def:MeetGroupoid} A \emph{meet groupoid} as defined in~\cite{Melnikov.Nies:22} is a groupoid  $(H, \cdot , {}^{-1})$ that is also a meet semilattice  $(H, \wedge  ,\ES)$ with least element $\ES$, and 
  satisfies the following conditions, where  $\sqsubseteq$ denotes the induced partial order  given by $A\sqsubseteq B :\Leftrightarrow A\wedge B=A$.

\begin{enumerate} 
\item 
(least element) 
$\ES$ is an idempotent such that $\ES \cdot A$ and $A \cdot \ES$ are only defined for $A = \ES$. 

\item
(compatibility)
$U  \wedge V \neq \ES$ for  all idempotents $U,V\neq\emptyset$. 

\item 
(monotonicity of inverse operation)
$A\sqsubseteq B$ $\Leftrightarrow A^{-1} \sqsubseteq B^{-1}$. 

\item 
If $A_i \cdot B_i$ are defined for $i=0,1$ and $A_0\cap A_1\neq \emptyset \neq B_0\cap B_1$, then 
$$ (A_0\cap A_1)  \cdot (B_0\cap B_1) = (A_0\cdot B_0) \cap (A_1 \cdot B_1). $$ 
\end{enumerate} 
\end{definition} 

 In the following let $G \le_c \S$ be an oligomorphic closed subgroup of the symmetric group $\S$.
  The meet groupoid $\+ W(G)$ of $G$ is   a variant of the coarse group $\+ M(G)$ of $G$  introduced in \cite{Nies.Schlicht.etal:21}, which consists of the open cosets with a ternary relation for $AB \subseteq C$.  The structure $\+ W(G)$  was briefly introduced in \cite[Remark 2.16]{Nies.Schlicht.etal:21}, where it was noted that  $\+ M(G)$   and $\+ W(G)$ are interdefinable in a uniform way.  
\begin{definition} \label{def:meetgr} Given an oligomorphic group $G$, the domain of the \emph{meet groupoid} $\+ W(G)$   consists of the open cosets, together with $\emptyset$. The groupoid product of open cosets $A \cdot B$ is defined iff $A$ is a left coset of a subgroup that $B$ is a right coset of. The meet is the usual intersection as a binary operation.  \end{definition}

 \subsection{A model theoretic view of $\+ W(G)$}
 
 We show that arbitrarily large canonical pieces of $\+ W(G)$ can be interpreted in $M_G$, where $M_G$ is the canonical structure for $G$.
  \subsubsection*{The structure of imaginaries for an $\omega$-categorical structure $M$}
 Let $M$ be a countable 
structure with domain $\omega$. By ``definable" we mean first-order definable in $M$  without parameters. The structure $M^{eq}$ of imaginaries over~$M$, going back to Poizat and Shelah,   has sorts $S= D/E$ for any $k \ge 1$,  nonempty  definable  $D \sub M^k$, and definable equivalence relation $E$ on $D$. We will use $\aaa, \beta, \ldots$ for elements of $M^{eq}$.  The atomic   relations of $M^{eq}$  are the  atomic  relations of the given structure $M$ (viewed as relations on the  ``home sort" $M$), as well as the graphs of the projections $D \to S=D/E$ for each sort $S$. 
 
 In a many-sorted structure, each definable relation is a subset of $\prod_{i< n} S_i$ for sorts  $S_0, \ldots, S_{n-1}$.  The   definable relations are given  as usual via   formulas in the   first-order logic in the corresponding signature  with variables for each sort.  Extending the usual  definition of algebraic/definable closure to a many-sorted structure is no problem. We note that assuming $M$ is $\omega$-categorical, $\text{acl}(A)  \cap S$ is finite for each finite $A \sub M^{eq}$ and  each sort $S$. 
 
 \subsubsection*{Open subgroups as stabilisers of imaginaries}
  Let $ G\le_c \S$ be oligomorphic. It is well-known that   $G= \Aut(M_G)$ for an $\omega$-categorical relational structure $M_G$ that for each $k$, for    each $k$-orbit of $G$ has a named $k$-ary relation. In this context one says that $M_G$ is a canonical structure for $G$; it is unique up to renaming the $k$-orbits, for each $k$.  Letting $G$ act on $M_G^{eq}$ in the canonical way, we have  the following well known fact  due to Ahlbrandt/Ziegler \cite{Ahlbrandt.Ziegler:86}  (see Theorem 2.7 in   \href{http://wwwf.imperial.ac.uk/~dmevans/Bonn2013_DE.pdf}{ David Evans' 2013  notes}).
  $U \le_o G$ denotes that $U$ is an open subgroup of $G$.

\begin{fact} \label{fac1}  $U \le_o G$ if and only if $ U = G_{\aaa}$ for some $\aaa \in M_G^{eq}$.

\n Moreover, for any $\bar{a}$ with $G_{\bar{a}}\subseteq U$, $\alpha$ can be chosen to be of the form $\bar{a}/E$, where $E$ is some definable 
equivalence relation. 
\end{fact}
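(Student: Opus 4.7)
The plan is to treat the two directions of the equivalence separately, with the reverse direction also yielding the moreover clause.

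For the forward direction, suppose $\aaa \in M_G^{eq}$. By the construction of the structure of imaginaries, $\aaa = \bar a / E$ for some tuple $\bar a$ from the home sort and some $\emptyset$-definable equivalence relation $E$ on a $\emptyset$-definable $D \sub M_G^k$. Any $g \in G$ fixing $\bar a$ pointwise automatically fixes its $E$-class, so $G_{\bar a} \sub G_{\aaa}$. Since $G_{\bar a}$ is a basic open neighborhood of the identity in the topology of pointwise convergence on $\S$, and $G \le_c \S$, the group $G_{\aaa}$ is open in $G$.

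For the reverse direction, assume $U \le_o G$ and fix a tuple $\bar a \in M_G^k$ with $G_{\bar a} \sub U$; such a tuple exists by definition of the topology on $G$ (the stabilizers of finite tuples form a basis at the identity). Aiming to produce an $E$ with $\aaa := \bar a / E$ satisfying $G_{\aaa} = U$, define on the orbit $G \cdot \bar a$ the relation
$$\bar b \, E \, \bar c \ \LR\  g^{-1} h \in U, \text{ where } g,h \in G \text{ satisfy } g \bar a = \bar b, \ h \bar a = \bar c,$$
and extend $E$ to $M_G^k$ by letting it be the identity outside the orbit. The condition on $g,h$ does not depend on the choice of representatives, because $G_{\bar a} \sub U$ forces any two representatives to differ on each side by an element of $U$; this also makes $E$ an honest equivalence relation.

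The crux is to verify that $E$ is first-order $\emptyset$-definable in $M_G$. Observe that $E \sub M_G^{2k}$ is $G$-invariant: if $f \in G$ and $\bar b \, E \, \bar c$ via $g,h$, then $f\bar b = (fg)\bar a$ and $f\bar c = (fh)\bar a$, and $(fg)^{-1}(fh) = g^{-1} h \in U$, so $f\bar b \, E \, f\bar c$. Hence $E$ is a union of $G$-orbits on $M_G^{2k}$, and by oligomorphicity there are only finitely many such orbits; each is $\emptyset$-definable by Ryll-Nardzewski, so $E$ itself is $\emptyset$-definable. Setting $\aaa = \bar a / E$ then yields $g \in G_{\aaa} \LR g\bar a \, E \, \bar a \LR g^{-1} \in U \LR g \in U$, so $U = G_{\aaa}$, and $\aaa$ has the required form $\bar a / E$. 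The main obstacle is exactly the definability of $E$, which is where the $\omega$-categoricity of $M_G$ is essential; without it, $E$ would be at best an infinitary object and would not correspond to an element of $M_G^{eq}$.
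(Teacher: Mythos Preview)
Your proof is correct and follows the same construction as the paper: both define $E$ on the orbit $G\cdot\bar a$ by $g\bar a\,E\,h\bar a \Leftrightarrow g^{-1}h\in U$ and set $\alpha=\bar a/E$, then check $G_\alpha=U$. You are in fact more explicit than the paper in verifying that $E$ is $\emptyset$-definable (via $G$-invariance and Ryll--Nardzewski), which the paper leaves tacit; note, however, that your labels ``forward'' and ``reverse'' are swapped relative to the stated biconditional.
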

 
 \begin{proof} 
 For the implication ``$\LA$" and the additional assertion, suppose that $\aaa \in D/E$ as above. We have $\aaa = \ol a  /E$ for some $\ol a \in D$. The pointwise stabiliser $G_{\ol a}$ is open by definition of the topology of $G$, and is contained in $G_\aaa$. So $G_\aaa$ is open as well.

  For the implication ``$\RA$",  pick $\ol a$ such that  $G_{\ol a}\sub U$. Let $D = G\cdot \ol a$, and define an  equivalence relation $E$ on $D$ by 
  \[  h \ol a E g  \ol a  \lra g^{-1}h \in U.\]
  Let $\alpha$ be the equivalence class of $\ol a$. 
  Then we have $G_\aaa = U$, because \\ $g(\aaa) = \aaa \lra g(\ol a ) E \ol a \lra g \in U$. 
\end{proof} 

\begin{definition} Let $D$ be a set of $n$-tuples and $E$ an equivalence relation on~$D$. The relation $E$ is called \emph{degenerate} if for some   set $\sigma \subsetneq n$, the equivalence relation $E_{\sss,D}= \{\langle \bar{a}, \bar{b}\rangle  \in D^2 \colon \forall i \in \sigma \, \bar a_i=\bar b_i\}$ is contained in $E$. Otherwise, call $E$ non-degenerate.
\end{definition}

\begin{fact} If  $\ol a$ is chosen of minimal length such that $G_{\ol a} \le U$, then the equivalence relation $E$ is non-degenerate. \end{fact}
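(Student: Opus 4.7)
I would argue by contradiction. Suppose $\ol a = (a_0,\dots,a_{n-1})$ is of minimal length with $G_{\ol a} \le U$, but the equivalence relation $E$ on $D = G \cdot \ol a$ (defined by $h\ol a \, E \, g\ol a \iff g^{-1}h \in U$) is degenerate. Then there exists a proper subset $\sigma \subsetneq n = \{0,\dots,n-1\}$ such that $E_{\sigma,D} \subseteq E$, i.e., whenever $g\ol a$ and $h\ol a$ agree on all coordinates indexed by $\sigma$, we already have $g^{-1}h \in U$.

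The plan is to extract from $\sigma$ a strictly shorter subtuple whose pointwise stabiliser is still contained in $U$, contradicting minimality. Let $\ol a_\sigma = (a_i)_{i \in \sigma}$, a tuple of length $|\sigma| < n$. I claim $G_{\ol a_\sigma} \le U$. Indeed, take any $g \in G_{\ol a_\sigma}$. Then the tuples $g \cdot \ol a$ and $\ol a = e \cdot \ol a$ agree on the coordinates in $\sigma$ by the choice of $g$, so the pair $\langle g\ol a, \ol a\rangle$ belongs to $E_{\sigma,D}$. By the degeneracy assumption this pair lies in $E$, which by the defining equivalence yields $e^{-1} g = g \in U$. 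Hence $G_{\ol a_\sigma} \le U$, and the length $|\sigma|$ is strictly less than $n$, contradicting the minimality of $\ol a$. Therefore $E$ must be non-degenerate.

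The argument is essentially bookkeeping; the only point that requires care is to make sure the degeneracy condition is applied in the correct direction, namely to the pair $\langle g\ol a, \ol a\rangle$ (not to some other representatives of the $E$-classes). This is straightforward because $E_{\sigma,D}$ is defined on representatives in $D$, and both $g\ol a$ and $\ol a$ lie in $D = G \cdot \ol a$. No further use of $\omega$-categoricity or of the oligomorphic hypothesis is needed for this fact; it is a general statement about how minimal defining tuples for an open subgroup interact with the definition of $E$ given in the proof of Fact~\ref{fac1}.
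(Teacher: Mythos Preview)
Your proof is correct and follows essentially the same argument as the paper's. The only cosmetic difference is that the paper first reduces, without loss of generality, to the case $\sigma = \{0,\dots,n-2\}$ (by enlarging $\sigma$ and reordering the tuple) and then works with $\ol a' = \ol a\upharpoonright_{n-1}$, whereas you work directly with the subtuple $\ol a_\sigma$ for an arbitrary proper $\sigma$; both routes yield the same contradiction with minimality.
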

\begin{proof} Assume for contradiction that  it is degenerate via $\sigma$. We may assume that $\sigma $ consists of the numbers $0, \ldots, n-2$. Let $\ol a' = \ol a \uhr{n-1}$.  We claim that $G_{\ol a'} \le U$. For let $h\in G_{\ol a'}$. 
Then $h(\ol a) E_\sigma \ol a$ and hence $h(\ol a) E \ol a$ holds,     so that $h \in U$.
\end{proof}

Given an open subgroup $G_{\alpha}$, note that the orbit equivalence relation $E_{G_{\alpha}}$ on $M^k$   is definable from $\alpha$, since $\bar{c} E_{G_{\alpha}} \bar{d}$ if and only if for some $\bar{a}, \bar{b}\in\alpha$, $\bar{a}^\smallfrown \bar{c}$ and $\bar{b}^\smallfrown \bar{d}$ have the same type. 

 The next fact states a model theoretic characterisation   of the containment relation between open  subgroups of $G$, and draws   a surprising consequence for the lattice of open subgroups.

  \begin{fact}  \label{fac2}\

 \bi  \item[(i)] For $\aaa, \beta \in M^{\mrm{eq}}$, one has $G_\aaa \sub G_\beta \LR \beta \in \text{dcl}(\aaa)$.

\item[(ii)] Each open  subgroup $U$ is contained in  only finitely many open subgroups.
\ei  
 \end{fact}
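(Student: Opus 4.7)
The plan is to handle (i) first via the standard Galois-theoretic characterisation of definable closure in $\omega$-categorical structures, and then deduce (ii) by an orbit-counting on $G/U$.

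For the $(\LA)$ direction of (i), assume $\beta\in \text{dcl}(\aaa)$ and pick a formula $\phi(x,y)$ in $M^{\mrm{eq}}$ making $\beta$ the unique solution of $\phi(\aaa,\cdot)$. Any $g\in G_\aaa$, acting as an automorphism of $M^{\mrm{eq}}$, sends $\phi(\aaa,\beta)$ to $\phi(\aaa,g\beta)$, so by uniqueness $g\beta=\beta$, i.e.\ $g\in G_\beta$. This step is routine. For the harder $(\RA)$ direction, assume $G_\aaa\sub G_\beta$. By Ryll--Nardzewski applied to the $\omega$-categorical many-sorted structure $M^{\mrm{eq}}$, the orbit $G_\aaa\cdot\beta$ is finite and coincides with the set of realisations (in the sort of $\beta$) of the complete type $\mrm{tp}(\beta/\aaa)$. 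The hypothesis $G_\aaa\sub G_\beta$ says this orbit is $\{\beta\}$, so the type is isolated by some formula $\phi(\aaa,y)$ having $\beta$ as its unique solution, giving $\beta\in \text{dcl}(\aaa)$. I would cite the Ryll--Nardzewski fact rather than reprove it.

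For (ii), write $U=G_\aaa$ using Fact~\ref{fac1}, and observe first that any subgroup of $G$ containing an open subgroup is itself open, so ``open supergroup of $U$'' and ``subgroup of $G$ containing $U$'' coincide here. Any such $V\supseteq U$ is a union of left $U$-cosets, hence a union of $(U,U)$-double cosets, and is determined by which subset of $U\backslash G/U$ it is the union of. It therefore suffices to show $U\backslash G/U$ is finite, equivalently that $U$ has finitely many orbits on $G/U$ under left multiplication. Through the $G$-equivariant bijection $G/U\cong \aaa^G$, this is the claim that $G_\aaa$ has finitely many orbits on $\aaa^G$. By the ``moreover'' clause of Fact~\ref{fac1} one can fix a tuple $\ol a$ from $M$ with $G_{\ol a}\le G_\aaa$. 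By $\omega$-categoricity of $M$, $G_{\ol a}$ has only finitely many orbits on $M^n$ for every $n$, and hence on every sort of $M^{\mrm{eq}}$; since $G_\aaa\supseteq G_{\ol a}$, its orbits on $\aaa^G$ are unions of $G_{\ol a}$-orbits and so also finite in number. The only genuinely non-trivial ingredient is the Galois-theoretic fact invoked in $(\RA)$ of (i); everything else is bookkeeping.
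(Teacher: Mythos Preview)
Your argument is correct. For part~(i) your route via Ryll--Nardzewski (orbits over parameters equal type-classes in $\omega$-categorical structures) is essentially the same content as the paper's, though the paper makes the defining formula explicit: it takes $R=G\cdot\langle\ol a,\wt b\rangle$ and observes that $\beta$ is the unique $F$-class meeting $\{\wt b':\exists\,\ol a'\in\aaa\ R(\ol a',\wt b')\}$. One small wording issue: the finiteness of $G_\aaa\cdot\beta$ is not a consequence of Ryll--Nardzewski in general; here it is simply because the hypothesis $G_\aaa\sub G_\beta$ already forces $G_\aaa\cdot\beta=\{\beta\}$. This does not affect the logic.

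For part~(ii) your approach is genuinely different from the paper's. The paper uses the ``moreover'' clause of Fact~\ref{fac1} to write any $V\supseteq U=G_{\ol a/E}$ as $G_{\ol a/F}$ for some definable $F\supseteq E$ on $G\cdot\ol a$, and then bounds the number of such $V$ by the (finite) number of definable equivalence relations coarser than $E$. You instead observe that any $V\supseteq U$ is a union of $(U,U)$-double cosets and then show $|U\backslash G/U|<\infty$ by an orbit count through $G/U\cong\aaa^G$. Your argument is really a proof that oligomorphic groups are Roelcke precompact, and it generalises verbatim to that setting (any open subgroup of a Roelcke precompact group has only finitely many open overgroups); the paper's argument stays inside the imaginaries and gives a tighter, more model-theoretic bound but is specific to the $\omega$-categorical picture.
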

 \begin{proof}
 (i) The implication ``$\LA$'' is immediate. 
 
 For ``$\RA$", suppose $\aaa = \ol a /E$ and $\beta = \wt b /F$ as above. Then 
 \bc $h \ol a  E g \ol a  \lra g^{-1}h \in G_\alpha$  and   $h \wt b  F g \wt b  \lra g^{-1}h \in G_\beta$.  \ec
 Let $R=  G \cdot \la \ol a , \wt b\ra$, and note that $R$ is a left invariant, and hence  definable, relation. 
 Since $G_\alpha \sub G_\beta$, 
 $\beta$ is  the unique equivalence class of $F$ containing $\wt  b '$ for some $\langle  \bar a',\wt b'\rangle \in R$ with $\bar a '\in \alpha$. 
Hence $\beta \in \text{dcl}(\aaa)$. 
 
\n  (ii) If $U \sub V$ where $U = G_\aaa$, $\aaa = \ol a/E$,   by \cref{fac1}  we have  $V = G_\beta $ for some imaginary $\beta = \ol a /F$, and $E \sub F$. So the number of open subgroups containing $U$ is bounded by the number of definable equivalence relations on $G\cdot \ol a$ containing $E$.  
 \end{proof}

 There is a natural generalisation of (i), proved in the same fashion. For $\aaa_1, \ldots, \aaa_k, \beta \in M^{eq}$, one has \bc $\bigcap_iG_{\aaa_i} \sub G_\beta \LR \beta \in \text{dcl}(\aaa_1, \ldots, \aaa_l)$. \ec

%
%
%
%
%
%
%
%

 \subsubsection*{Open cosets}
 
 For imaginaries $\aaa_0, \aaa_1$ of the same sort, let 
 \[[\aaa_0, \aaa_1] := G_{\aaa_0 , \aaa_1}: = \{ g \in G \colon g(\aaa_1)= \aaa_0\}. \] 
 In particular, $G_{\aaa, \aaa} = G_\aaa$. Clearly if $G_{\aaa_0 , \aaa_1}\neq \ES$ then it  is a right coset of $G_{\aaa_0}$,  and is a left coset of $G_{\aaa_1}$. Also $G_{\aaa_1 , \aaa_0}= G_{\aaa_0 , \aaa_1}^{-1}$. 
 
 \begin{fact} \label{coset and pairs} Each open coset $A$ is of the form $[\aaa_0, \aaa_1]$. Here $\aaa_1$ can be chosen to be any imaginary such that $A$ is a left coset of $G_{\aaa_1}$.
  \end{fact}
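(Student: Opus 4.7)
The plan is to observe that the statement reduces, via Fact \ref{fac1}, to a direct coset calculation. First, given an open coset $A$, by definition $A$ is a left coset of some open subgroup $U$, and by Fact \ref{fac1} we can write $U = G_{\aaa_1}$ for an imaginary $\aaa_1 \in M_G^{\mrm{eq}}$. More is true: the Fact already gives us a wide range of admissible choices for $\aaa_1$, since any imaginary whose stabiliser is $U$ will do. So the real content is the second assertion, and I would prove it directly.

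Let $\aaa_1$ be any imaginary with $A$ a left coset of $G_{\aaa_1}$, and pick any $h \in A$, so that $A = h \, G_{\aaa_1}$. I propose to set
\[ \aaa_0 := h(\aaa_1),\]
which is of the same sort as $\aaa_1$, since $G$ acts sort-preservingly on $M_G^{\mrm{eq}}$. Now I verify $[\aaa_0,\aaa_1] = A$: for $g \in G$,
\[ g \in [\aaa_0,\aaa_1] \LR g(\aaa_1) = \aaa_0 = h(\aaa_1) \LR h^{-1}g \in G_{\aaa_1} \LR g \in h\,G_{\aaa_1} = A. \]
This is the key step, and it is essentially a one-line computation.

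Finally, I would note that the definition of $\aaa_0$ does not actually depend on the auxiliary choice of $h$: any other $h' \in A$ satisfies $h' \in [\aaa_0,\aaa_1]$ by the equivalence just displayed, hence $h'(\aaa_1) = \aaa_0 = h(\aaa_1)$. So $\aaa_0$ is canonically attached to the pair $(A, \aaa_1)$. I do not anticipate any real obstacle; the only subtle point is ensuring $\aaa_0$ and $\aaa_1$ lie in the same sort, which is automatic from the action of $G$ on $M_G^{\mrm{eq}}$, and checking that the construction indeed recovers an arbitrary left coset rather than only some special ones — but this is precisely what the displayed chain of equivalences shows.
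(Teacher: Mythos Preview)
Your proof is correct and follows exactly the paper's approach: write $A = hG_{\aaa_1}$ and set $\aaa_0 = h(\aaa_1)$. The paper compresses this to a single line, while you spell out the verification $[\aaa_0,\aaa_1] = hG_{\aaa_1}$ and add the (correct but unneeded) remark that $\aaa_0$ is independent of the choice of~$h$.
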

 
 \begin{proof} Write $A = h G_{\aaa_1}$ for some $h \in G$,  and let $\aaa_0 = h(\aaa_1)$. \end{proof}

 We now extend \cref{fac2}(i) to general open cosets. 
 
   \begin{fact}  \label{fac3}
  
       $G_{\aaa_0 , \aaa_1}=:A  \sub B:= G_{\beta_0 , \beta_1}  \LR $
       
       \hfill $A = \ES \vee \beta_i \in \text{dcl}(\aaa_i)$ ($i=0,1)$ via the same formula. \end{fact}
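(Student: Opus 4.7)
The direction $\Leftarrow$ is straightforward: if $A=\emptyset$ there is nothing to show, while if a single formula $\psi(x,y)$ without parameters defines $\beta_i$ from $\aaa_i$ for both $i=0,1$, then any $g\in A$ satisfies $g(\aaa_1)=\aaa_0$, and automorphism invariance of $\psi$ transports the unique witness $\beta_1$ of $\psi(\aaa_1,y)$ to the unique witness $\beta_0$ of $\psi(\aaa_0,y)$, so $g(\beta_1)=\beta_0$, i.e.\ $g\in B$.

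For the direction $\Rightarrow$, assume $A\neq\ES$ and fix any $h_0\in A$, so $h_0(\aaa_1)=\aaa_0$ and (since $h_0\in B$) $h_0(\beta_1)=\beta_0$. I first plan to recover the two stabiliser inclusions $G_{\aaa_i}\sub G_{\beta_i}$ by a pre/post-multiplication trick: for $g\in G_{\aaa_1}$ the element $h_0 g$ still lies in $A\sub B$, which after cancelling $h_0$ forces $g(\beta_1)=\beta_1$; symmetrically, $g\in G_{\aaa_0}$ yields $gh_0\in B$ and hence $g(\beta_0)=\beta_0$. By \cref{fac2}(i) this already gives $\beta_i\in\text{dcl}(\aaa_i)$ for each $i$.

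The subtle part is the ``same formula'' assertion. My plan is to move from definable closure to a definable orbit: let $O\subseteq M^{eq}\times M^{eq}$ be the $G$-orbit of the pair $(\aaa_0,\beta_0)$ under the diagonal action. Since $G$ is oligomorphic, $O$ is definable without parameters by some $\psi(x,y)$. The element $h_0$ sends $(\aaa_1,\beta_1)$ to $(\aaa_0,\beta_0)$, so $(\aaa_1,\beta_1)\in O$ as well, and I will verify that $\psi(\aaa_i,y)$ isolates $\beta_i$ uniquely: any further solution $\psi(\aaa_1,\gamma)$ comes from some $g\in G$ with $g(\aaa_0)=\aaa_1$ and $g(\beta_0)=\gamma$, whence $g^{-1}\in A\sub B$ and $\gamma=g(\beta_0)=\beta_1$; the argument at $\aaa_0$ uses $G_{\aaa_0}\sub G_{\beta_0}$ in the same way. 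Thus $\psi$ serves as the common defining formula.

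The only real obstacle I foresee is the very last uniqueness verification, because one must use the hypothesis $A\sub B$ (not merely the two stabiliser inclusions) to kill spurious solutions of $\psi(\aaa_0,y)$ and $\psi(\aaa_1,y)$; this is what makes the ``same formula'' phrasing genuinely stronger than the conjunction $\beta_i\in\text{dcl}(\aaa_i)$. Everything else reduces to bookkeeping with the identifications $A=G_{\aaa_0,\aaa_1}$, $B=G_{\beta_0,\beta_1}$ and the facts already established.
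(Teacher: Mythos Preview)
Your argument is correct and follows the same overall strategy as the paper: reduce to the stabiliser inclusions $G_{\aaa_i}\sub G_{\beta_i}$ and then invoke \cref{fac2}(i). The only difference is in how the ``same formula'' clause is obtained. The paper does this in one stroke: from $G_{\aaa_1}=A^{-1}A\sub B^{-1}B=G_{\beta_1}$ and \cref{fac2}(i) one gets a formula $\phi$ with $\beta_1\in\text{dcl}(\aaa_1)$ via $\phi$; then any $g\in A$ is an automorphism sending $(\aaa_1,\beta_1)$ to $(\aaa_0,\beta_0)$, so automorphism invariance of $\phi$ immediately yields $\beta_0\in\text{dcl}(\aaa_0)$ via the \emph{same} $\phi$. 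Your route---building the orbit formula $\psi$ for the pair and checking uniqueness at both $\aaa_0$ and $\aaa_1$ by hand---arrives at the same place (indeed the $\phi$ produced inside the proof of \cref{fac2}(i) \emph{is} an orbit-of-a-pair formula), but the transport-by-$g$ observation makes the separate verification at $\aaa_0$ and the preliminary derivation of $G_{\aaa_0}\sub G_{\beta_0}$ unnecessary.
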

  \begin{proof}
 (i) The implication ``$\LA$'' is again  immediate. 
 
 \n For ``$\RA$", suppose $\aaa_i = \ol a_i /E$ and $\beta_i = \wt  b_i /F$ as above, for $i=0,1$. Since $G_{\aaa_1} = A^{-1} A \sub B^{-1} B=     G_{\beta_1}$,   we have $\beta_1 \in \text{dcl}(\aaa_1)$,   via a formula $\phi(\ol x, \wt y)$. 
 We may assume $A \neq \ES$. Let  $g\in A$, so that  $g (\aaa_1) = \aaa_0$. Then  $g (\beta_1) = \beta_0$. Hence $\beta_0 \in \text{dcl}(\aaa_0)$ via~$\phi$. \end{proof}

\begin{fact}  \label{count cosets} Let $V$ be an open subgroup. Let $k= |\mrm{dcl}(\beta) \cap S|$, where $V = G_\beta$ and $\beta $ is of sort $S$ such that  $S$ is a single orbit in $M^{\mrm{eq}}$; note that by \ref{fac2}(i),  $k$  does not depend on the choice of $S$. For each open subgroup $U$,   the  number of left  cosets $A$ of $V$ that are right cosets of $U$ is   either $0$  or $k$. \end{fact}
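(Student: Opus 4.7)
The plan is to recast the count in group-theoretic terms and compare it with $k$. An open left coset of $V$ has the form $hV$ for some $h\in G$, and is simultaneously a right coset of $hVh^{-1}$. So $hV$ is a right coset of $U$ iff $hVh^{-1}=U$; such an $h$ exists iff $V$ and $U$ are conjugate in $G$. Assuming this and fixing $h_0$ with $h_0Vh_0^{-1}=U$, the admissible $h$ form the left coset $h_0\,N_G(V)$, and two of them give the same $hV$ iff they differ by an element of $V$. Hence the count in question equals $|N_G(V):V|$, and the task reduces to showing $|N_G(V):V|=k$.

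I would then exhibit this equality by defining $\Phi\colon N_G(V)/V\to\mrm{dcl}(\beta)\cap S$ by $\Phi(nV)=n\cdot\beta$. Well-definedness and injectivity of $\Phi$ are immediate from $V=G_\beta$. The image lies in $\mrm{dcl}(\beta)\cap S$ because $n\cdot\beta\in S$ and $G_{n\cdot\beta}=nVn^{-1}=V$ (as $n\in N_G(V)$), so \ref{fac2}(i) gives $n\cdot\beta\in\mrm{dcl}(\beta)$. The same Fact takes care of independence of the choice of $\beta$ and $S$: if $V=G_\beta=G_{\beta'}$ then $\beta,\beta'$ are interdefinable by \ref{fac2}(i), and the resulting definable $G$-equivariant bijection between their orbits $S$ and $S'$ restricts to a bijection $\mrm{dcl}(\beta)\cap S\to\mrm{dcl}(\beta')\cap S'$.

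The main obstacle is surjectivity of $\Phi$. Given $\alpha\in\mrm{dcl}(\beta)\cap S$, write $\alpha=g\cdot\beta$; then $V=G_\beta\subseteq G_\alpha=gVg^{-1}$. Conjugating this inclusion repeatedly by $g$ produces an ascending chain
\[ V\ \subseteq\ gVg^{-1}\ \subseteq\ g^2Vg^{-2}\ \subseteq\ \cdots \]
of open subgroups above $V$. By \ref{fac2}(ii) only finitely many open subgroups contain $V$, so the chain must stabilise; from $g^NVg^{-N}=g^{N+1}Vg^{-(N+1)}$, conjugation by $g^{-N}$ gives $V=gVg^{-1}$, hence $g\in N_G(V)$ and $\Phi(gV)=\alpha$. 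This ascending-chain trick --- which sidesteps the fact that $\alpha\in\mrm{dcl}(\beta)$ does not in general force $\beta\in\mrm{dcl}(\alpha)$ --- is the one non-formal ingredient of the argument, and its reliance on \ref{fac2}(ii) is what makes the proof go through.
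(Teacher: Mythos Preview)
Your proof is correct, and in one respect more complete than the paper's. The routes differ: the paper parametrises the cosets directly via the representation $A'=[\alpha',\beta]$ from \cref{coset and pairs}, so that the count becomes the number of $\alpha'\in S$ with $G_{\alpha'}=U$, and then asserts this equals $|\mrm{dcl}(\alpha)\cap S|$. You instead first identify the count group-theoretically as $|N_G(V):V|$ and then build the explicit bijection $\Phi$ with $\mrm{dcl}(\beta)\cap S$. Your detour through the normaliser buys a recognisable invariant and makes clear that the answer depends only on the conjugacy class of $V$.

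Both routes hinge on the same non-formal point, namely that $\alpha'\in\mrm{dcl}(\beta)\cap S$ forces $G_{\alpha'}=G_\beta$ rather than merely $G_\beta\subseteq G_{\alpha'}$; equivalently, that $V\subseteq gVg^{-1}$ implies equality. The paper's proof passes over this silently, whereas your ascending-chain argument using \cref{fac2}(ii) supplies it explicitly. That is the genuine content of the fact, and you have isolated it correctly.
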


\begin{proof} 
If $U$ is not conjugate to $V$ then the number of such cosets is $0$.  Otherwise let  $A$ be  such a coset:  by~\cref{coset and pairs} we may write $A = [\aaa, \beta]$ for some $\aaa \in S$ such that $U = G_\aaa$. Any coset $A'$ with    $_{U}A' _V$ can be written as $[\aaa', \beta]$ where  $G_{\aaa'}= U$. If $A''=[\aaa'', \beta]$ is another such coset, then $A'= A''$ iff $\aaa' = \aaa''$. So the number of cosets is $ |\mrm{dcl}(\aaa) \cap S|$,  which equals $k$ as $\aaa$ and $\beta $ are in the same $G$-orbit. 
 \end{proof}
 
 \begin{example} 
 Let $k \ge 2$. Let $M$ be the structure with one permutation function   $\pi$, where  each cycle   has length $k$. Then $G= C_k \wr \S$ where the wreath product is taken via the action of $\S$ on $\omega$; this acts 1-transitively on $M$. Let $S$ be the home sort. Let $V$ be the stabiliser of some element $b$. For each $U$ stabiliser of some $a$ there are $k$ cosets $A_i$ $(i< k)$ with $_{U}A _V$: we have $A_i = \{ f \colon f(b) = \pi^{i}(a)\}$. 
 \end{example}
 
 \subsubsection*{Meet groupoid operations}

We   show that the meet groupoid  operations on $\+ W(G)$ are definable in $M^{\mrm{eq}}$ when restricted to particular sorts. Recall that we write $[\aaa_0, \aaa_1] := G_{\aaa_0 , \aaa_1}$. Also note that since $M$ is $\omega$-categorical, interdefinability between elements of two fixed sorts is definable in $M$: up to equivalence in $\mrm{Th}(M)$ there are only finitely many possibilities of pairs of formulas witnessing  the interdefinability.

For the  \emph{intersection} operation, 
suppose $S, T $ are sorts, and $\aaa_i \in S$ and $\beta_i \in T$ for $i=0,1$. Let $\gamma_i = \langle \alpha_i, \beta_i \rangle  \in S \times T$. Clearly  $[\aaa_0 , \aaa_1] \cap [\beta_0 , \beta_1] = [\gamma_0 , \gamma_1]$. So given a sort $R$ and $\delta_0, \delta_1 \in R$, by~\ref{fac3} the 6-ary relation 
  $[\aaa_0 , \aaa_1] \cap [\beta_0 , \beta_1]= [\delta_0 , \delta_1]$ is definable in $M^{\mrm{eq}}$.

The  {\it 
  inverse} is definable because $[\aaa_0 , \aaa_1] ^{-1} = [\aaa_1 , \aaa_0]$.

This leaves the  \emph{groupoid product} of two  cosets:
\begin{fact} Given sorts  $R,S,T$ of $M^{\mrm{eq}}$, 
the  following relation is definable: \bc $[\aaa_0, \aaa_1] \cdot [\beta_0, \beta_1]= [\gamma_0, \gamma_1]$, where $\aaa_i \in R, \beta_i\in S, \gamma_i \in T$. \ec \end{fact}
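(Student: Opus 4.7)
The plan is to unfold the groupoid product in terms of the action on $M^{\mrm{eq}}$, express its result as another coset of the form $[f(\alpha_0), \beta_1]$ for a suitable definable function $f$, and then use $\omega$-categoricity to turn the description into a first-order formula.

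Since $[\alpha_0, \alpha_1]$ is a left coset of $G_{\alpha_1}$ while $[\beta_0, \beta_1]$ is a right coset of $G_{\beta_0}$, the groupoid product is defined iff $G_{\alpha_1} = G_{\beta_0}$, which by Fact~\ref{fac2}(i) is equivalent to $\alpha_1$ and $\beta_0$ being interdefinable. In that case let $f$ denote the definable partial function from sort $R$ to sort $S$ with $f(\alpha_1) = \beta_0$. Since automorphisms commute with definable functions, for any $g \in [\alpha_0, \alpha_1]$ we have
\[ g(\beta_0) = g(f(\alpha_1)) = f(g(\alpha_1)) = f(\alpha_0). \]
Hence for any $h \in [\beta_0, \beta_1]$, $(gh)(\beta_1) = g(\beta_0) = f(\alpha_0)$, so
\[ [\alpha_0, \alpha_1] \cdot [\beta_0, \beta_1] = [f(\alpha_0), \beta_1]. \]

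Next, I would express the equality $[f(\alpha_0), \beta_1] = [\gamma_0, \gamma_1]$ by applying Fact~\ref{fac3} in both directions: assuming both cosets are nonempty, this equality holds iff there is a single formula $\varphi$ witnessing both $\gamma_1 \in \mrm{dcl}(\beta_1)$ and $\gamma_0 \in \mrm{dcl}(f(\alpha_0))$, together with a converse witness $\varphi'$ going the other way.

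Finally, I would pass to a first-order formula by invoking $\omega$-categoricity: by Ryll-Nardzewski, for any pair of fixed sorts there are only finitely many formulas (up to equivalence modulo $\mrm{Th}(M)$) that can serve as definability witnesses between elements of those sorts. Enumerating admissible triples of such formulas (one candidate for $f$, and corresponding candidates for $\varphi, \varphi'$) yields a finite disjunction of first-order conditions on $(\alpha_0, \alpha_1, \beta_0, \beta_1, \gamma_0, \gamma_1)$ that defines the desired ternary relation in $M^{\mrm{eq}}$, together with a clause stating the product is empty when $\alpha_1, \beta_0$ are not interdefinable. The main bookkeeping challenge is tracking which formula plays which role in the composition $\varphi \circ f$, but this is handled cleanly by the finite case split.
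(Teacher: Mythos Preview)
Your proposal is correct and follows essentially the same route as the paper. The paper's $\alpha_0' \in S$ (obtained from $\alpha_0$ via the same pair of interdefinability formulas that link $\alpha_1$ and $\beta_0$) is exactly your $f(\alpha_0)$; the paper then reduces to the equality $[\alpha_0', \beta_1] = [\gamma_0, \gamma_1]$ via Fact~\ref{fac3} and invokes the same finiteness of formulas modulo $\mrm{Th}(M)$ that you cite from Ryll--Nardzewski.
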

 \begin{proof} Note that $[\aaa_0, \aaa_1] \cdot [\beta_0, \beta_1]= [\gamma_0, \gamma_1]$ iff 

 \emph{$\aaa_1, \beta_0$ are interdefinable via some formulas $\phi, \psi$, and
 $\exists \aaa_0' \in S$ 
 \\
 $[ \aaa_0, \aaa_0'$ interdefinable via the same formulas $\phi, \psi$, and $[ \aaa'_0 , \beta_1] = [\gamma_0, \gamma_1]$. }

The implication from right to left  holds  because $[\aaa_0', \beta_0] \cdot [\beta_0, \beta_1]= [ \aaa'_0 , \beta_1]$, and by~\ref{fac3}. The converse implication holds because $[\aaa_0, \aaa_1] \cdot [\beta_0, \beta_1]$ is defined iff  $G_{\aaa_1}= G_{\beta_0}$. So $\aaa_1, \beta_0$ are interdefinable by \ref{fac2}. 

This suffices because there are only finitely many possibilities for $\phi, \psi$ (up to equivalence relative to  the theory of $M$).
\end{proof}

  \section{Nies: The   quotient of $N_G$ by $C_G$ for $1$-transitive oligomorphic $G$} 
  
 For an oligomorphic group $G$, the centre $Z(G)$ is finite because each $g\in Z(G)$ is $G$-invariant as a binary relation.   In  \cite[Th.\ 4.5]{LogicBlog:18} it was proved    that $G/Z(G)$ is homeomorphic to an oligomorphic group. The following is a variation on  this result, with a   proof   similar to the original one.  
 
 For a countable structure $M$ let  $\+ E_M$ is the orbital structure of $M$. It has a $2n$-ary relation symbol for each $n$, that two  $n$-tuples are  in the same orbit.  Note that if $G = \Aut(M)$ for $\omega$-categorical $M$, then the normaliser $N_G$ is $\Aut(\+ E_M)$.
 
   \begin{prop} \label{prop:CG} Let $G \le_c \S$ be   oligomorphic and 1-transitive. Let $N_G$ be the normaliser of $G$ in $\S$, and let $C_G$ be its centraliser. Then $N_G/C_G$ is  homeomorphic to an oligomorphic group. 
  \end{prop}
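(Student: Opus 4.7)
My plan is to realise $N_G/C_G$ as a faithful, oligomorphic, closed permutation group on an explicit countable set $Y$. Let $M = M_G$ be the canonical $\omega$-categorical structure for $G$. I would first establish two preliminary facts about $C_G$. By $1$-transitivity, $C_G$ acts semi-regularly on $M$: if $c \in C_G$ fixes some $a_0 \in M$, then for every $g \in G$ one has $c(g\cdot a_0) = g\cdot c(a_0)=g\cdot a_0$, hence $c=\id$. By $\omega$-categoricity of $M$, the set $\mrm{dcl}(a_0)\cap M$ is finite; since each $c \in C_G$ is determined by $c(a_0)$, and semi-regularity forces $c(a_0)\in \mrm{dcl}(a_0)\cap M$ (any element of $G_{a_0}$ must fix $c(a_0)$), we conclude $|C_G|<\infty$.

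The countable set I propose is $Y := M^2/C_G$, the quotient of $M\times M$ by the diagonal $C_G$-action. Since $C_G \triangleleft N_G$, the natural $N_G$-action on $M^2$ descends to $Y$ and induces a continuous homomorphism $\phi: N_G \to \Sym(Y)$. The crucial step is to show $\ker\phi = C_G$. Suppose $h \in N_G$ acts trivially on $Y$. Then for every $(a,b)\in M^2$ there exists $c \in C_G$ with $c(a)=h(a)$ and $c(b)=h(b)$. By semi-regularity, $c$ is uniquely determined by its value at any single point, so writing $\sigma(a)$ for the unique element $c \in C_G$ with $c(a)=h(a)$, we find $\sigma(a)=\sigma(b)$ for all $a,b \in M$. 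Hence $\sigma$ is a constant element of $C_G$, and $h \in C_G$. I expect this step to be the main subtlety: the use of \emph{pairs} is essential. The analogous argument on $M/C_G$ alone only yields $c_{g(a)}\,g(a) = c_a\,g'(a)$ with $g' = hgh^{-1} \in G$, which does not force $c_a$ constant unless one already knows $h$ centralises $g$. This is precisely where the proof must diverge from the argument for $G/Z(G)$ in \cite{LogicBlog:18}: there every $z \in Z(G)$ commutes with all of $G$, so the single-variable space $M/Z(G)$ is enough.

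Next, since $G \le N_G$ acts oligomorphically on $M$, it has finitely many orbits on $M^{2k}$, and these descend to finitely many $N_G$-orbits on $Y^k = (M^2)^k/C_G^k$ for each $k$; so $\phi(N_G)$ is oligomorphic on $Y$. To see that $\phi(N_G)$ is closed in $\Sym(Y)$, suppose $\phi(h_n) \to \psi$ in $\Sym(Y)$. For each $a \in M$, the value $\phi(h_n)[(a,a)]$ determines the $C_G$-orbit of $h_n(a)$ for large $n$, and since $|C_G|<\infty$ this orbit is finite; after extracting a subsequence, $h_n(a)$ is eventually constant. A diagonal argument over an enumeration of $M$ (applied both to $h_n$ and $h_n^{-1}$) produces a pointwise limit $h \in \Sym(M)$; it lies in $N_G$ since $N_G$ is closed in $\S$, and by construction $\phi(h)=\psi$. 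Combining everything, $\phi$ factors through a continuous injection $\bar\phi: N_G/C_G \to \Sym(Y)$ with closed image, and the open mapping theorem for Polish groups gives that $\bar\phi$ is a homeomorphism onto an oligomorphic closed subgroup of $\Sym(Y)$, which is the conclusion of the proposition.
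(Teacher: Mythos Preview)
Your proof is correct and follows essentially the same route as the paper: act on $Y=M^2/C_G$, use semi-regularity of $C_G$ (from $1$-transitivity) to show the induced $N_G/C_G$-action is faithful, and realise $N_G/C_G$ as a closed oligomorphic subgroup of $\Sym(Y)$. The tactical differences are minor: for the topological embedding the paper verifies directly that stabilising finitely many elements of $Y$ forces $g\in (N_G)_R\,C_G$ (its condition (b)), whereas you obtain closedness of the image by a diagonal-subsequence argument exploiting $|C_G|<\infty$ and then invoke the open mapping theorem; for oligomorphicity the paper quotes Tsankov's characterisation (Roelcke precompact plus finitely many $1$-orbits), while you argue directly that $G$-orbits on $M^{2k}$ surject onto $N_G$-orbits on $Y^k$. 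Your faithfulness argument via the map $a\mapsto\sigma(a)$ is in fact a slightly cleaner packaging of the paper's Claim~1.
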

  The hypothesis that $G$ is 1-transitive is for simplicity only. Note here that both $N_G$ and $C_G$ are closed subgroups of $\S$, and that $C_G$ is a normal subgroup of $N_G$.  So $H= N_G/C_G$ is   a Polish group via  the quotient topology:  for $C_G \le U \le N_G$, the   subgroup $U/C \le H$ is declared to be open iff $U$ is open in $N_G$.  (See    \cite[Prop.\ 2.2.10]{Gao:09}.) Also $C_G$ is finite because each of its elements is  $G$-invariant when seen as a binary relation on $\omega$, and hence definable in the canonical ($\aleph_0$-categorical) structure for $G$. The group   $H$   naturally embeds  $G/Z(G)$ as a closed subgroup because $G \cap C_G = Z(G)$. However $C_G$ can be much bigger than $Z(G)$. In the finite setting, simply view a nonabelian group $G$ on $n$ elements  as a subgroup of $S_n$ via  the left regular presentation. Then $C_G$ contains the right regular presentation, and thus an isomorphic copy of $G$. For an oligomorphic  example building on this, take a wreath product by $\S$.

\begin{remark} \label{rmk:action} Suppose $\delta \colon L \times X \to X$ is a left action of a group $L$   on a set $X$, and $C \trianglelefteq L$. Write $\sim_C$ for the orbit equivalence relation of the subaction of $C$. Then $L$ acts from the left   on $X/\sim_C$ via $\delta_C (g,  [x] )= [\delta(g,  x)]$ (where $[x]$ is the $\sim_C$ class of $x$).  Since the elements of $C$ act as the identity, $L/C$ also acts on $X / \sim_C$.  \end{remark}
 
 \begin{proof}[Proof of Prop.~\ref{prop:CG}]
 In the notation above, let $X =  \NN \times \NN$ and let  $N_G$ act on $X$ by the usual diagonal action. Write  $C= C_G$.  
 
 \begin{claim} The action of $N_G /C$ on $X / \sim_{C}$ is faithful. \end{claim}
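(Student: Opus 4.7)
The plan is to prove the contrapositive-style statement directly: take $g \in N_G$ that acts trivially on $X/\sim_C$, and deduce $g \in C$, i.e., $g$ commutes with every element of $G$.

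First I would unpack the hypothesis pointwise. Saying that $g$ fixes the $\sim_C$-class of every pair $(a,b) \in \NN \times \NN$ means that for each pair there exists $c = c_{a,b} \in C$ (depending on the pair) such that $c(a) = g(a)$ and $c(b) = g(b)$. So from knowing that $g$ acts trivially on the quotient, one extracts for each chosen pair a centralising element $c$ that agrees with $g$ on those two coordinates.

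The key step is then to choose the pair cleverly. Fix any $h \in G$. For each $a \in \NN$, apply the hypothesis to the pair $(a, h(a))$: there exists $c \in C$ with $c(a)=g(a)$ and $c(h(a)) = g(h(a))$. Because $c \in C$ centralises every element of $G$, we have $c(h(a)) = h(c(a)) = h(g(a))$. Comparing gives $g(h(a)) = h(g(a))$. Since $a$ was arbitrary, $gh = hg$; since $h \in G$ was arbitrary, $g \in C_G = C$.

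There is no real obstacle: the entire argument uses only the defining property of the centraliser and the trick of evaluating the hypothesis at the pair $(a, h(a))$, which lets us propagate the local equality ``$c$ agrees with $g$ at two points'' into a global commutation relation. I would just remark that 1-transitivity of $G$ is not used in this specific claim (it enters elsewhere in the proof of Proposition~\ref{prop:CG}, for establishing oligomorphicity of $N_G/C$ acting on $X/\sim_C$).
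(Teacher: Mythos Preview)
Your proof is correct and is essentially the contrapositive of the paper's argument: where the paper starts from $g\in N_G\setminus C$, picks $\eta\in G$ and $w$ with $g\eta(w)\neq\eta g(w)$, and shows that the pair $(w,\eta(w))$ is moved out of its $\sim_C$-class, you instead assume every pair $(a,h(a))$ stays in its class and deduce $gh=hg$. The key trick---evaluating at a pair of the form $(a,h(a))$ and using that the witnessing $c\in C$ commutes with $h$---is identical in both versions, and your remark that 1-transitivity is not needed for this particular claim is correct.
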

\n  To see this, suppose $g \in N_G \setminus C$. Let $\eta \in G$ and $w \in \NN$ be such that \bc $ (g \eta) \cdot w \neq (\eta g) \cdot w$. \ec  Let $w'= \eta \cdot w$.  Then $ g \cdot (w,w') \not \sim_C (w,w')$,  because any permutation $h $  such that $h \cdot (w,w') = g \cdot (w,w')$ satisfies $(h \eta) \cdot w \neq (\eta h) \cdot w$, so that $h \not \in C$.  This shows the claim.

 Given  a Polish group $H$  with a faithful  action $\gamma \colon H \times V \to V$ on  a  countable set $V$,  we obtain a monomorphism $\Theta_\gamma \colon H \to \mathrm{Sym}(V)$ given by $\Theta_\gamma(g)(k)= \gamma(g,k)$. A Polish group action is continuous iff it is separately continuous (i.e. when one argument is fixed). In the case of an action on countable $V$ (with the discrete topology), the latter condition  means that  
 
 \bi \item[(a)] for each $k,n \in V$, the set $\{ g \colon \gamma(g,k)=n\}$ is open. \ei

 So $\gamma$ is continuous iff $\Theta_\gamma$ is continuous. 

\begin{definition} 
We say that a faithful  action $\gamma \colon H \times V \to V$   is \emph{strongly continuous} if the embedding $\Theta_\gamma$ is topological. 
\end{definition} 
\n Equivalently, the action is continuous,  and for each open set $U$ containing $1_H$, also $\Theta_\gamma(U)$ is open, namely,   

 \bi \item[(b)]
for each open  $U$ containing $1_H$, there is a  finite set $B \sub V$  such that $\fa k \in B \, [\gamma(g,k) = k]$ implies $g \in U$.  \ei Since $H$ is Polish, strong continuity of the action  implies that $H$  is topologically isomorphic  via $\Theta_\gamma$ to a closed subgroup of $\mathrm{Sym}(V)$ (see e.g.\  \cite[Prop.\ 2.2.1]{Gao:09}).

  Let  $\delta$ be the canonical action of $N_G$  on $\NN \times \NN$, and note that it has finitely many orbits. Now let $V:= \NN \times \NN/\sim_{C}$.  By Remark~\ref{rmk:action}, we obtain an action  $\gamma$ of $N_G/C_G$ on $V$, which by the above yields   an (algebraic) embedding $\Theta_\gamma$ of $N_G/C$ into $\mathrm{Sym}(V)$. 
 
 \begin{claim}  \label{cl:1tr}  $\Theta_\gamma \colon N_G/C_G \to \mathrm{Sym}(V)$  is a topological embedding. \end{claim}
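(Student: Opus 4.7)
Injectivity of $\Theta_\gamma$ has already been established in Claim 4.2, so what remains is to compare topologies. Rather than verify conditions (a) and (b) directly (condition (b) would require a delicate combinatorial argument transferring pairwise data on $V$ to a global choice of element in $C_G$), my plan is to prove continuity, then show that the image is closed in $\mathrm{Sym}(V)$, and finally invoke the Open Mapping Theorem for Polish groups.

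\textbf{Step 1 (continuity).} For $v,w\in V$ with representatives $x=(a,b),y=(c,d)\in\NN^2$, the preimage under $\Theta_\gamma$ of the subbasic open set $\{\sigma\in\mathrm{Sym}(V):\sigma(v)=w\}$ is $\{[g]\in N_G/C_G:g\cdot x\sim_{C_G} y\}$. Pulling back to $N_G$ yields
\[
\bigcup_{c\in C_G}\{g\in N_G:g\cdot x=c\cdot y\},
\]
a finite union (as $C_G$ is finite) of left cosets of the pointwise stabiliser $N_{G,x}$, hence open in $N_G$. The set is left $C_G$-invariant, so it descends to an open set in $N_G/C_G$.

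\textbf{Step 2 (closed image).} Suppose $\Theta_\gamma([g_n])\to\phi$ in $\mathrm{Sym}(V)$, with lifts $g_n\in N_G$. For each $v=[x]\in V$ with $\phi(v)=[y]$, we have $g_n\cdot x\sim_{C_G}y$ eventually; since $C_G\cdot y$ is finite, $g_n(x)$ eventually lies in a finite subset of $\NN^2$. Taking $x=(a,a)$ shows $g_n(a)$ eventually lies in a finite subset of $\NN$ for every $a\in\NN$. The same argument applied to the convergence $\Theta_\gamma([g_n])^{-1}\to\phi^{-1}$ yields the analogous statement for $g_n^{-1}$. A standard diagonal extraction now produces a subsequence along which both $g_n(a)$ and $g_n^{-1}(a)$ are eventually constant for every $a\in\NN$; the pointwise limits define mutually inverse permutations, hence some $g\in\mathrm{Sym}(\NN)$. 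Since $N_G$ is closed in $\mathrm{Sym}(\NN)$, $g\in N_G$, and for any $v=[x]$ one checks $g\cdot x\sim_{C_G}$ (rep of $\phi(v)$), so $\Theta_\gamma([g])=\phi$.

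\textbf{Step 3 (Open Mapping).} Both $N_G/C_G$ and the image $\Theta_\gamma(N_G/C_G)$, being a closed subgroup of the Polish group $\mathrm{Sym}(V)$ by Step 2, are Polish groups. By Step 1, $\Theta_\gamma$ induces a continuous surjective homomorphism $N_G/C_G\to\Theta_\gamma(N_G/C_G)$, which is automatically open by the Open Mapping Theorem for Polish groups. Combined with injectivity, this is exactly the statement that $\Theta_\gamma$ is a topological embedding. The main obstacle is Step 2: one must upgrade the given convergence of cosets in $V$ to genuine pointwise convergence of representatives in $\mathrm{Sym}(\NN)$, and the crucial leverage is the finiteness of $C_G$, which confines each $g_n(a)$ to a finite set and makes the diagonal extraction possible.
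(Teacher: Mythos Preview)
Your proof is correct and takes a genuinely different route from the paper. The paper verifies conditions (a) and (b) directly. For (b) it exploits the 1-transitivity hypothesis: given a finite $R\sub\NN$, setting $B=(R\times R)/\!\sim_{C}$, if $p\in N_G$ fixes every class in $B$ then for each pair $u,v\in R$ there is $c_{u,v}\in C_G$ with $p\cdot(u,v)=c_{u,v}\cdot(u,v)$; because $G$ is 1-transitive the centraliser $C_G$ acts freely on $\NN$, so any two of these witnesses agreeing on one coordinate must be equal, and a short chain argument gives a single $c\in C_G$ with $p\in (N_G)_R\,c$. This is precisely the ``delicate combinatorial argument transferring pairwise data to a global choice'' that you anticipated and sidestepped.

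Your approach trades this for a closed-image computation plus the Open Mapping Theorem. What you gain: the argument uses only that $C_G$ is finite and $N_G$ is closed in $\Sym(\NN)$, so the 1-transitivity hypothesis (which the paper itself flags as ``for simplicity only'') is never invoked. What the paper's approach gains: it is constructive, producing for each basic neighbourhood $U/C$ of the identity an explicit finite $B\sub V$ whose pointwise stabiliser lies inside $U/C$, rather than appealing to an automatic-continuity black box. Both proofs hinge on the finiteness of $C_G$, but at different moments---yours for the compactness that makes the diagonal extraction work, the paper's for the finiteness of the set of possible witnesses $c_{u,v}$.
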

To see this, 
 we check the conditions (a) and (b) above.  
 
 \medskip 
 
\n  (a) Suppose that $k = [(w_0, w'_0)], n  = [(w_1, w'_1)]$. Then $\gamma(g,k)=n$ iff there are $(v_0, v'_0) \sim_C (w_0, w'_0)$ and  $(v_1, v'_1) \sim_C (w_1, w'_1)$ such that $g \cdot v_0 = v_1$ and $g \cdot v'_0 = v'_1$. The set of such $g$ is  open in $N_G$.

 \medskip 
 \n  (b) An open  neighbourhood   of  $1_{N_G/C}$ has the form $U/C$ wherewhere $U \sub N_G$ is   open   and $C \le U$.   By definition of the topology on $N_G$, we may assume that  $U = (N_G)_{R} C$ for some finite $R \sub \NN$ (as usual $(N_G)_R$ is the pointwise stabiliser of the set $R$). Let $B = (R \times R) / \sim_C$. Let  $g = pC \in H$, where $p \in N_G$. 
 
 Suppose that  $  \gamma(g,k) = k$ for each $k \in B$. This means that for each $u,v \in R$, there is a $c_{u,v} \in C$ such that $p \cdot  (u,v) = c_{u,v} \cdot  (u,v)$. Since $G$ acts faithfully and 1-transitively on $\NN$, for each $c,d \in C$, if there is  $y \in \NN$ such that  $c\cdot  y = d \cdot y$, then  $c=d$. Therefore, given another pair $r,s \in R$, we have $c_{u,v} = c_{u,s} = c_{r,s}$. Let $c \in C$ be this unique witness; then $p \cdot u = c \cdot u$ for each $u \in R$, hence  $p \in (N_G)_R C$ and therefore $g \in U/C$.
 This shows the claim.
 
 
 Since $N_G/C_G$ is Roelcke precompact and its action $\gamma$ on $V$
  has only finitely many 1-orbits, by   \cite[Theorem 2.4]{Tsankov:12} the action $\gamma$ is oligomorphic.   \end{proof}

 \renewcommand{\S} {\Sym(\NN)}

  \section{Nies:  $\Aut(G)$ and $\Out(G)$   for   closed  subgroups $G$ of $\S$}
  
 Most results in this post have been obtained in co-work with either Gianluca Paolini~\cite{Nies.Paolini:24}, or Philipp Schlicht. By $\Aut(G)$ we will denote the group of (bi-)\emph{continuous} automorphisms of  a Polish group~$G$. 
 {We discuss how to put a natural Polish  topology on $\Aut(G)$ for $G$ locally Roelcke precompact, and then proceed to  the  result that $\Out(G)= \Aut(G)/\Inn(G) $ is t.d.l.c.\ for each oligomorphic group~$G$~\cite[Theorem   3.10]{Nies.Paolini:24}.}

  \subsection{A Polish topology on $\Aut(G)$ with continuous action on $G$}
   \ \\
    A \emph{Polish space} is a separable topological space with a topology that is induced by a complete metric. A \emph{Polish group}  is a topological group based on a Polish space.   
    
  \begin{question} \label{qu: Pol} For which    Polish group $G$  can one topologise $\Aut(G)$ as a Polish group so that the   action   $\Aut(G) \times G \to G$ is continuous? \end{question}

  We develop a criterion for when the question has an affirmative answer. We will   apply it to the  locally Roelcke precompact non-Archimedean groups~\cite{Rosendal:21,Zielinski:21}. This class is large in the sense that is contains the disparate classes of  t.d.l.c.\ as well as   oligomorphic groups.  
  
 The setting is this:  Suppose that    there is a countable basis $\+ W(G)$ for the topology of $G$  such that  $\+ W(G)$ is invariant under the  natural action of $\Aut(G)$ on the open subsets of $G$. Then the action yields a monomorphism   $\Aut(G) \to \mathrm{Sym}( \+ W(G))$, where $\mathrm{Sym}(X)$ is the group of permutations on a set $X$. The group $\mathrm{Sym}(X)$ for countable $X$ is Polish via   the topology of pointwise convergence:  the pointwise stabilisers of finite subsets of $X$ form a neighbourhood basis of the identity, consisting of open subgroups.
  
  \begin{criterion} \label{fact:embed} Suppose the basis $
  \+ W(G)$ is as above, and the range of the induced  monomorphism   $\Aut(G) \to \mathrm{Sym}( \+ W(G))$    is closed. Then   Question~\ref{qu: Pol} has an affirmative answer  via the topology $\tau$ on $\Aut(G)$ that makes the embedding a homeomorphism.     Thus, one declares   as open the  subgroups of the form \begin{equation} \label{eqn: basis ident} \{\Phi  \in \Aut(G)\colon \forall {i= 1\ldots n} \  \Phi(A_i)= A_i\},\end{equation}  where $A_1, \ldots, A_n \in \+ W(G)$.
 \end{criterion}
\begin{proof} Note that   $\Aut(G)$  with this topology is non-Archimedean, and hence Polish.  An action of a Polish group on a Polish space is continuous iff it is separately continuous (see \cite{Gao:09}). So it suffices to show that for each $g\in G$, the map $\Aut(G) \to G$ given by $\Phi \mapsto \Phi(g)$ is continuous. Suppose then that $\Phi(g) \in B$ where $B\in \+ W(G)$. Let $A = \Phi^{-1}(B)$. Then $g\in A$. If $\Theta \in \Aut(G)$ is such that $\Theta(A)= \Phi(A)$, then $\Theta(g) \in B$. 
\end{proof}

 We next characterise the topology on  $\Aut(G)$ obtained, in the case that $G$ is totally  disconnected. Recall that a topological space $X$ is Baire if the intersection of countably many dense open sets is dense. Each Polish space is Baire.  
  
  \begin{prop} \label{prop:coarsest}  Suppose that    the elements of $\+ W(G)$ are all clopen, and   that  \cref{fact:embed} applies.  As above, let $\tau$ be   the  topology on $\Aut(G)$ 
  inherited from $\Sym(\+ W(G))$ through the monomorphism $\Aut(G) \to \mathrm{Sym}( \+ W(G))$. 
   \bi \item[(i)] Suppose  $\eta$ is a  Baire group topology on  $\Aut(G)$ making the action on $G$ continuous. Then $\tau \sub \eta$. 
    \item[(ii)]  $\tau$ is the unique Polish group topology on $\Aut(G)$ making the  action on $G$ continuous.   \ei\end{prop}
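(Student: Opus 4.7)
The plan is to prove (i) via Pettis' theorem, and then deduce (ii) from (i) using the open mapping theorem for Polish groups.

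For (i), it suffices to show that every basic $\tau$-open subgroup $U_A := \{\Phi \in \Aut(G) : \Phi(A) = A\}$, with $A \in \+ W(G)$, is $\eta$-open; such subgroups form a neighborhood basis of the identity for $\tau$ by \cref{fact:embed}. I would establish that $U_A$ has the Baire property in $\eta$ and is non-meager, so that Pettis' theorem forces $U_A \cdot U_A^{-1} = U_A$ to contain an $\eta$-neighborhood of the identity.

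For the Baire property, the key reduction is that $U_A$ is an $\eta$-$G_\delta$. Since $G$ is Polish and hence separable, the clopen sets $A$ and $A^c$ admit countable dense subsets $D$ and $D'$. Each $\Phi \in \Aut(G)$ is continuous by definition, so $\Phi^{-1}(A)$ is clopen, and hence $\Phi(D) \sub A$ forces $A = \overline{D} \sub \Phi^{-1}(A)$; similarly $\Phi(D') \sub A^c$ forces $\Phi(A^c) \sub A^c$. Consequently
\[
U_A \; = \; \bigcap_{g \in D} \{\Phi : \Phi(g) \in A\} \; \cap \; \bigcap_{g \in D'} \{\Phi : \Phi(g) \in A^c\},
\]
and each set on the right is $\eta$-open by continuity of the $\eta$-action on $G$. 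Thus $U_A$ is $\eta$-$G_\delta$ and has the Baire property. For non-meagerness, note that $U_A$ is the stabilizer of $A$ under the $\Aut(G)$-action on the countable set $\+ W(G)$, so $U_A$ has countable index. As $(\Aut(G), \eta)$ is a disjoint union of countably many translates of $U_A$ and is Baire, at least one translate must be non-meager; since translation is a self-homeomorphism, $U_A$ itself is non-meager.

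For (ii), \cref{fact:embed} already gives that $\tau$ is a Polish group topology making the action continuous. If $\eta$ is another such Polish topology, part (i) yields $\tau \sub \eta$, so the identity map $(\Aut(G), \eta) \to (\Aut(G), \tau)$ is a continuous bijective group homomorphism between Polish groups; by the open mapping theorem for Polish groups it is a homeomorphism, so $\tau = \eta$. The main obstacle is the first step of (i): a priori $U_A$ is defined by an uncountable conjunction over all $g \in G$, and one must exploit both the clopenness of $A$ and the automatic continuity of every $\Phi \in \Aut(G)$ to compress this to a countable intersection over a dense subset. Once this reduction is in hand, Pettis' theorem and the open mapping theorem finish the argument.
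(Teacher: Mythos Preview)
Your proof is correct and follows essentially the same route as the paper: both show that each basic $\tau$-open subgroup $U_A$ is an $\eta$-$G_\delta$ by reducing to a countable dense set (you use $D\subseteq A$ and $D'\subseteq A^c$, the paper uses $D\subseteq A$ together with $\Phi^{-1}(g)\in A$), and then invoke automatic continuity to conclude $\tau\subseteq\eta$. The only cosmetic difference is that you unpack the last step via Pettis' theorem and a countable-index non-meagerness argument, whereas the paper cites the packaged result that Baire-measurable homomorphisms from Baire groups to separable groups are continuous.
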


\begin{proof} (i)   Every  Baire measurable homomorphism from a Baire group $K$ to a separable group $L$  is continuous~\cite[Th.\ 9.9.10]{Kechris:95}.  (Here, to be  Baire measurable means that the pre-image of every open set  in $L$ has  meager  symmetric difference with some  open set in $K$.) We apply this to the identity homomorphism $(\Aut(G), \eta) \to(\Aut(G), \tau)$. If it is continuous then $\tau \sub \eta$. 
	
	Each Borel set for $\eta$ has meager symmetric difference with some open set in the sense of $\eta$ (i.e., it has the property of Baire for $\eta$).  We  verify  that   the $\tau$-open sets in the neighbourhood  basis (\ref{eqn: basis ident}) of the identity automorphism are $G_\delta$ for $\eta$: this  will imply that   each  $\tau$-open set is Borel for~$\eta$.   

To do so, for $A\in \+ W(G)$, we verify that  the  set 
\bc $\+ S=\{ \Phi\in \Aut(G) \colon \, \Phi(A) = A\} $  \ec is $G_\delta$ with respect to  $\eta$. Let $D$ be a countable dense subset of $A$. Then \bc $\+ S= \bigcap_{g \in D} \{\Phi \colon \ \Phi(g) \in A \lland \Phi^{-1}(g) \in A\}$.   \ec Thus $\+ S$  is $G_\delta$ for $\eta$ because $A$ is open and $\eta$ makes the action continuous.  Now each set in (\ref{eqn: basis ident}) is a finite intersection of such sets, and hence $G_\delta$ for $\eta$ as well.  

\n (ii) If $\eta$ is Polish, then it is Baire, so $\tau \sub \eta$. This implies   $\tau = \eta$ because no Polish group topology on a group can be properly contained in another (see, e.g., \cite[2.3.4]{Gao:09}). 
\end{proof}

\subsection{Polish topology on $Aut(G)$ for locally Roelcke precompact $G$}
 \n \emph{From now on we restrict ourselves to   the context of infinite closed subgroups of $\S$}; by $G,H$ we usually denote such groups.  For background on (locally) Roelcke precompact groups,  also outside the setting  of  non-Archimedean groups,  see Rosendal~\cite{Rosendal:21} and also   Zielinski~\cite{Zielinski:21}. 	
  \begin{definition} (i) A group $G$ is \emph{Roelcke precompact (R.p.)} if each open subgroup $U$ has only finitely many double cosets $UgU$, where $g\in G$.  In other words, the left action of $U$ on its own \emph{left} cosets has only finitely many orbits. 
 	
 	\n (ii)  $G$ is \emph{locally Roelcke precompact} if it has a R.p.\  open subgroup. \end{definition}

 Trivially, each open subgroup of a R.p.\ group is also R.p. Thus, if $G$ is locally R.p., its R.p.\ open subgroups form a neighbourhood basis of the identity. 
 
We obtain    an affirmative answer to   Question~\ref{qu: Pol} for locally Roelcke precompact groups
by establishing  a Polish topology on $\Aut(G) $ that makes its action on $G$ continuous. By~\cref{prop:coarsest} this topology is  unique.   In the following let $G$ be locally R.p/
\begin{definition} Given  a locally R.p.\ group $G$, let $\+ W(G)$ be the set of cosets of Roelcke precompact open subgroups, together with $\ES$. \end{definition}
Since $G$ is locally R.p.\ this is a basis of $G$. To show that  $\+ W(G)$ is countable, it suffices  to verify that it contains only countably many subgroups. Each such subgroup  $U$ is the union of finitely many double cosets of the point stabiliser of a finite subset of $\NN$, so there are only  countably many possibilities for $U$. 
	To show that  the range of the embedding $\Aut(G) \to \mathrm{Sym}( \+ W(G))$  is closed, we provide  a structure  (in the sense of model theory) on the basis, also  denoted     $\+ W(G)$ so that the range of the embedding equals $\Aut(\+W (G))$.   On $\+ W(G)$, take the intersection operation, together with the partial binary operation $A \cdot B$ that is defined when $A $ is a left coset and $B$ a right coset of the same subgroup~$U$, in which case  it is the usual product of subsets of~$G$. This was sketched already in~\cite[Section 3]{LogicBlog:22}.   $\+ W(G)$ seen as a structure  is called the \emph{meet groupoid} of $G$. 
	
	\begin{definition}[Full filters, \cite{Nies.Schlicht.etal:21}, Def.\ 2.4] \label{def:FF}
	A filter  $R $ on  the partial  $ (\+ W(G) - \{ \ES\}, \sub)$ is  called \emph{full}  if each open subgroup of $G$ has a  left and a right coset in $R$.  (Note that these cosets are necessarily unique.)
\end{definition}

\begin{lemma}  \label{lem:KechrisNies}
 There is a canonical bijection between the elements of a locally R.p.\ group $G$ and   the set of full filters on $  \+ W(G)$. It is given by \bc $g \mapsto R_g :=  \{ gU \colon \, U \le_o G\}$. \ec It inverse is given by $R \mapsto  g$ where $\{ g\}= \bigcap R$.  
\end{lemma}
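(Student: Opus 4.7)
The plan is to verify both maps separately and then check that they are mutual inverses. For the direct map $g\mapsto R_g$, I would check in turn that $R_g$ is nonempty (it contains $U$ itself for any R.p.\ open subgroup~$U$), closed upward within $\mathcal W(G)\setminus\{\emptyset\}$ (any $A\in\mathcal W(G)$ containing $gU$ contains $g$, and so equals $gV$ for the R.p.\ open subgroup $V$ whose coset $A$ is), and closed under binary intersection (via $(gU_1)\cap(gU_2)=g(U_1\cap U_2)$ together with the meet-groupoid axiom that delivers $U_1\cap U_2\in\mathcal W(G)$). Fullness is then immediate: for any R.p.\ open $U$, the left coset $gU$ lies in $R_g$, and the right coset $Ug=g\cdot(g^{-1}Ug)$ equals $gV\in R_g$ for the R.p.\ open subgroup $V=g^{-1}Ug$.

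The harder direction is to recover $g$ from a full filter $R$. For each R.p.\ open $U$, let $h_U U$ denote the unique left coset of $U$ in $R$ provided by fullness. I would define a candidate $g\colon\NN\to\NN$ by picking, for each $n$, an R.p.\ open $U\leq_o G_{\{n\}}$ (which exists because $\mathcal W(G)$ is a basis at~$1$) and setting $g(n):=h_U\cdot n$. Well-definedness uses meet-groupoid closure: for two such choices $U,U'$, the intersection $W=U\cap U'$ is again an R.p.\ open subgroup, and any $h\in h_U U\cap h_{U'}U'\cap h_W W$ (nonempty as a filter intersection in $R$) yields $h_U\cdot n=h\cdot n=h_{U'}\cdot n$. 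Applied to a finite $F\subseteq\NN$ together with an R.p.\ open $U\leq_o G_F$, the same argument shows that $g$ agrees with $h_U$ on~$F$, which at once gives injectivity of $g$ (take $|F|=2$) and membership $g\in G$ (since $G$ is closed in $\Sym(\NN)$ and $g$ is a pointwise limit of elements of~$G$; the analogous approximation for $g^{-1}$ comes from the right-coset part of fullness).

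The main obstacle I expect is surjectivity of $g$, which forces coordinating the left- and right-coset halves of fullness. Given $m\in\NN$, I would take an R.p.\ open $V\leq_o G_{\{m\}}$ and let $Vk\in R$ be the unique right coset of $V$ in $R$; every element of $Vk$ then sends $n:=k^{-1}\cdot m$ to $m$. Intersecting in the filter, with $U_n\leq_o G_{\{n\}}$ R.p.\ open, any $h\in h_{U_n}U_n\cap Vk$ (nonempty as a filter intersection) satisfies simultaneously $h\cdot n=g(n)$ and $h\cdot n=m$, so $g(n)=m$. A parallel intersection trick verifies $g\in\bigcap R$: for $A=hU\in R$, pick a finite $F$ with $G_F\leq U$ (possible because $U$ is open) and an R.p.\ open $V\leq_o G_F$; filter consistency gives $h_V V\subseteq hU$, while by construction $g$ agrees with $h_V$ on $F$, so $g\in h_V G_F\subseteq h_V U=hU$.

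Finally, the two maps are mutual inverses. For $g\mapsto R_g\mapsto g'$, the intersection $\bigcap R_g$ equals $\{g\}$ because the cosets $gU$ with $U$ R.p.\ open form a separating neighbourhood basis at~$g$. For $R\mapsto g\mapsto R_g$, the inclusion $R\subseteq R_g$ follows from $g\in\bigcap R$ combined with the fact that each $A\in R$ is a coset of some R.p.\ open $U$ and contains $g$, hence equals $gU\in R_g$; the reverse inclusion $R_g\subseteq R$ then follows from uniqueness of the left coset of $U$ in $R$. Throughout, the key technical ingredients are the meet-groupoid closure $U_1\cap U_2\in\mathcal W(G)$ and the coordinated use of the left- and right-coset clauses of fullness.
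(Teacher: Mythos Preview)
Your proof is correct and follows essentially the approach the paper points to: the paper itself only cites Kechris--Nies--Tent and Nies--Schlicht--Tent for the key step that $\bigcap R\neq\emptyset$, and your explicit construction of $g(n)$ via the unique left coset in $R$ of a sufficiently small R.p.\ open subgroup, with surjectivity supplied by the right-coset clause of fullness, is exactly the argument carried out in those references. One trivial slip: in your first paragraph, $R_g$ contains $gU$ rather than ``$U$ itself''.
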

\begin{proof}
The main point is to show that   $\bigcap R$ is non-empty for each full filter~$R$.  This  is proved similar to  \cite[Claims 3.6 and 3.7]{Kechris.Nies.etal:18}; also see the proof of \cite[Prop.\ 2.13]{Nies.Schlicht.etal:21}.
\end{proof}

\begin{lemma} \label{fa:when in R}
	Let $R$ be a full filter on $ \+ W(G)$. For each $A \in  \+ W(G) - \{ \ES\}$ we have 
	\bc $A\in R \LR \ex g \in A  [R =R_g] $. \ec
\end{lemma}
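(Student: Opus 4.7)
The plan is to derive both implications directly from \cref{lem:KechrisNies}, which identifies full filters on $\+ W(G) - \{\emptyset\}$ with elements of $G$ via the mutually inverse maps $g \mapsto R_g$ and $R \mapsto g = \bigcap R$. Once this bijection is invoked, the statement reduces to elementary coset bookkeeping.

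For the forward implication, suppose $A \in R$. By \cref{lem:KechrisNies} the full filter $R$ equals $R_g$ for a unique $g \in G$ satisfying $\{g\} = \bigcap R$. Since $A \in R$, this gives $g \in \bigcap R \subseteq A$, which supplies the required witness $g \in A$ with $R = R_g$.

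For the backward implication, I would proceed by unfolding the definitions. Suppose $g \in A$ and $R = R_g$. Since $A$ belongs to $\+ W(G) - \{\emptyset\}$, it is a coset of some Roelcke precompact open subgroup $V$ of $G$. Using $g \in A$, I can write $A = gV$ when $A$ is presented as a left coset of $V$, and $A = Vg = g(g^{-1}Vg)$ when $A$ is presented as a right coset. In either case $A = gU$ for some open subgroup $U$, so $A \in R_g = R$ directly from the definition of $R_g$.

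The only non-tautological point, and the step I expect to be the main (minor) obstacle, is verifying that in the right-coset case the subgroup $W := g^{-1}Vg$ is again Roelcke precompact and open, so that $A = gW$ really lives in $\+ W(G)$ and hence in $R_g$. This is where I would use that conjugation by $g$ is a topological group automorphism of $G$: it sends open subgroups to open subgroups, and preserves Roelcke precompactness because it induces a bijection on the set of double cosets $UxU$ of any open subgroup $U$. With this observation in place, both directions are immediate.
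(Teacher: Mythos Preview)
Your proof is correct and follows essentially the same approach as the paper: both directions are extracted from the bijection of \cref{lem:KechrisNies}. The only difference is presentational---the paper simply takes $A$ to be a left coset of some open subgroup $V$ from the outset (which is always possible), so the backward direction is the one-liner $A = gV \in R_g$, avoiding your right-coset case and the conjugation detour; for the forward direction it uses uniqueness of the left $V$-coset in $R$ rather than your (equally valid) observation that $g \in \bigcap R \subseteq A$.
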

\begin{proof} Suppose  $A$ is a left coset of  the open subgroup $V$.
	For  right to left, note that  $A = gV \in R$. 
	
	For left to right, by Lemma~\ref{lem:KechrisNies} we have  $R = R_h =  \{hU \colon U \le_o G\}$  for some $h \in G$.  Then $hV \in R$, so $hV= A$, so $h \in A$.  
\end{proof}

\begin{prop} \label{prop: basis} Let  $G$ be locally R.p. Then  the hypothesis of  Criterion~\ref{fact:embed} holds: the range of the natural monomorphism $\Gamma\colon \Aut(G) \to \mathrm{Sym}( \+ W(G))$ is closed.
 \end{prop}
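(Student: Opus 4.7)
The plan is to characterise the range of $\Gamma$ as the group $\Aut(\+ W(G))$ of automorphisms of the meet groupoid viewed as a first-order structure with the operations $\wedge$, $\cdot$ (partial), and ${}^{-1}$. Since this is the automorphism group of a countable first-order structure, it is closed in $\Sym(\+ W(G))$ automatically, so it suffices to prove the equality $\Gamma(\Aut(G)) = \Aut(\+ W(G))$.

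The inclusion $\Gamma(\Aut(G)) \sub \Aut(\+ W(G))$ is immediate: any bi-continuous group automorphism of $G$ sends R.p.\ open subgroups to R.p.\ open subgroups and commutes with intersection, the groupoid product, and the coset inverse, so its action on $\+ W(G)$ is an automorphism of the structure.

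For the converse, given $\sigma \in \Aut(\+ W(G))$, I would first observe that $\sigma$ permutes the subgroups lying in $\+ W(G)$ (characterised as idempotents $U = U \cdot U$) and preserves the relations ``$A$ is a left coset of $U$'' and ``$A$ is a right coset of $U$'', since these are expressible by $A \cdot U = A$ and $U \cdot A = A$ respectively. Since $\sigma$ also preserves $\wedge$, and hence the induced order $\sub$, $\sigma$ maps full filters (\cref{def:FF}) to full filters. By \cref{lem:KechrisNies} this induces a bijection $\Phi \colon G \to G$ defined by $R_{\Phi(g)} = \sigma(R_g)$, with inverse induced by $\sigma^{-1}$. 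Using \cref{fa:when in R}, for every $A \in \+ W(G)$ and $g \in G$ we have
\[
g \in A \iff A \in R_g \iff \sigma(A) \in \sigma(R_g) = R_{\Phi(g)} \iff \Phi(g) \in \sigma(A),
\]
so $\Phi$ setwise maps $A$ onto $\sigma(A)$. Continuity of $\Phi$ and of $\Phi^{-1}$ then follow since $\+ W(G)$ is a basis for $G$, and $\Gamma(\Phi) = \sigma$.

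It remains to verify that $\Phi$ is a group homomorphism, which reduces to recovering the group operations of $G$ from the meet groupoid via full filters: for any $g, h \in G$, the filter $R_{gh}$ should be generated by the products $A \cdot B$ with $A \in R_g$, $B \in R_h$, $A \cdot B$ defined, and $R_{g^{-1}} = \{A^{-1} \colon A \in R_g\}$. The statement for inverses is routine. The multiplicative statement is the main obstacle; its crux is that for every R.p.\ open $U \le_o G$ in $\+ W(G)$ there is an R.p.\ open $V \le_o G$ in $\+ W(G)$ with $V \sub U \cap h^{-1} U h$, whence $(gV) \cdot (Vh) = gVh \sub ghU$ shows that the products $A \cdot B$ arising from $R_g, R_h$ are cofinal in $R_{gh}$. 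Once these recovery statements are in place, the commutativity of $\sigma$ with $\cdot$ and ${}^{-1}$ yields $\Phi(gh) = \Phi(g)\Phi(h)$ and $\Phi(g^{-1}) = \Phi(g)^{-1}$ at once.
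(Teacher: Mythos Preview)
Your proposal is correct and follows essentially the same route as the paper: identify the range of $\Gamma$ with $\Aut(\+W(G))$, and for each $\sigma\in\Aut(\+W(G))$ recover $\Phi\in\Aut(G)$ via the full-filter bijection of \cref{lem:KechrisNies}, checking with \cref{fa:when in R} that $\Phi$ carries each $A$ onto $\sigma(A)$. You are in fact more careful than the paper's write-up, which asserts $\Delta\colon\Aut(M)\to\Aut(G)$ and verifies it inverts $\Gamma$ without explicitly checking that $\Delta(\eta)$ is a group homomorphism; your cofinality argument (that $R_{gh}$ is generated by the defined products $A\cdot B$ with $A\in R_g$, $B\in R_h$) is exactly what is needed to close that gap. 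One small slip: to get $(gV)\cdot(Vh)=gVh=gh\,(h^{-1}Vh)\subseteq ghU$ you need $h^{-1}Vh\subseteq U$, i.e.\ $V\subseteq hUh^{-1}$, not $V\subseteq h^{-1}Uh$; taking $V=U\cap hUh^{-1}$ (which is again R.p.\ open) fixes this.
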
 
 \begin{proof}  
We  show that the range of $\Gamma$ equals $\Aut(\+W (G))$, which is closed, being  the automorphism group of a structure.  	Given $\aaa \in \Aut(G)$, recall that   its image $\Gamma( \aaa)$ is the map  $A \mapsto \aaa(A)$.  Write $M$ for $\+ W(G)$ seen as a meet groupoid.   Clearly   $\Gamma( \aaa) \in \Aut(M)$ for each $\aaa \in \Aut(G)$.

 	We define the inverse $\Delta \colon \Aut(M) \to \Aut(G)$ by 
 	\bc $\{ \Delta(\Theta)(g)\}= \bigcap \{\Theta(g U)\colon  U \le_o G\}$.\ec
 	This is well defined because of Lemma~\ref{lem:KechrisNies}. 
 	We verify $\Delta(\Gamma( \aaa))= \aaa$ for any    $\aaa \in \Aut(G)$. Write $\hat \aaa = \Gamma(\aaa)$. Note that  \bc $\{\Delta(\hat \aaa)(g)\}=  \bigcap_{V \le_o G} \{\hat \aaa(g \hat \aaa^{-1}(V))\}= \bigcap_U \aaa(g)U=\{\aaa(g)\}$. \ec
 	We next  verify $\Gamma (\Delta(\eta))= \eta$ for any    $\eta \in \Aut(M)$.  Recall that we write $R_g$ for   the full filter  $\{gU \colon U \le_o G\}$ corresponding to $g$. For each $A \in M-\{\ES\}$, using Lemma~\ref{fa:when in R}, and  for convenienve  ignoring the difference between $x$ and~$\{ x\}$,
 	\begin{eqnarray*}
 		\Gamma (\Delta(\eta))(A)  & =&   \{ \bigcap \eta(R_g) \colon g\in A\} \\ 
 	&=& \{ \bigcap \eta(R) \colon \, R \text { full filter on } M \land A \in R\} \\
 	&=& \{ \bigcap S \colon \, S \text { full filter on } M \land \eta(A) \in S\} \\
 	&=&  \eta(A).
 	\end{eqnarray*} 
 \end{proof}

\subsection{Recovering $G$ from its meet groupoid $\+ W(G)$. }

As before suppose that $G$ is locally Roelcke precompact. Write $M= \+ W(G)$. Let

$\+ G(M)=\{ p \in \Sym(M) \colon \, p \text{ is a partial order isomorphism } \land $

 \hfill  $ p(A \cdot B) = p(A ) \cdot B \text{ whenever }   A \cdot B \text{ is defined } \}$.

\begin{remark} The elements of $\+G(M)$ are equivalent to full filters in the sense of Def.\ \ref{def:FF},  as follows: given $p\in  \+G(M)$ let $R = \{ pU \colon U \le_o G\}$. Given a full filter $R$, let $p(U) = A$ where $A$ is the left coset of $U$ in $R$; this determines the values on all cosets because $p(B) = p(U)\cdot B$ when $B$ is a right coset of $U$. One can see our treatment here as a simplified version of the treatment in \cite{Nies.Schlicht.etal:21}, which at the same time  applies to the much more general context of locally R.p.\ groups.  \end{remark}

 
\begin{lemma} $G \cong \+ G(M)$ via the map $L$ sending $g \in G$ to its left translation on~$M$: $L(g)(A)= gA$. Its inverse $K$   is given by sending $p \in \+ G(M)$ to $g\in G$ such that \bc $\{ g \} = \bigcap \{p(U)  \colon \, U \le_o  G\}$. \ec \end{lemma}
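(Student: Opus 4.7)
The plan is to show that $L$ is a group isomorphism with inverse $K$. First I would verify $L(g) \in \+G(M)$ for each $g \in G$: left multiplication by $g$ maps cosets to cosets of the same subgroup, preserves $\subseteq$ hence the induced partial order, and satisfies $L(g)(A \cdot B) = g(A B) = (gA) \cdot B = L(g)(A) \cdot B$ by associativity of set multiplication. Clearly $L(gh) = L(g) \circ L(h)$, so $L$ is a group homomorphism. Injectivity is immediate: if $L(g) = \mathrm{id}_M$ then $gU = U$, hence $g \in U$, for every R.p.\ open subgroup $U$; since such subgroups form a neighbourhood basis of $1$ in the Hausdorff group $G$, we get $g = 1$.

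The substantive part is surjectivity, which I would establish simultaneously with the identification of the inverse. Given $p \in \+G(M)$, the Remark tells us that $R := \{p(U) : U \le_o G\}$ is a full filter on $M$ in the sense of Definition~\ref{def:FF}, so by Lemma~\ref{lem:KechrisNies} there is a unique $g\in G$ with $\{g\} = \bigcap R$; this is $K(p)$. The task is then to prove $p = L(g)$.

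The bridging observation is that $p(U)$ is a left coset of $U$ for every R.p.\ open subgroup $U$: indeed, $p(U) = p(U \cdot U) = p(U) \cdot U$, and for the right-hand product to be defined $p(U)$ must be a left coset of $U$. Since $R = R_g$ contains a unique left coset of each such $U$, namely $gU$, we deduce $p(U) = gU$ for every R.p.\ open subgroup. To extend this equality to all of $M$, given nonempty $A \in M$, write $A = hV$ as a left coset of an R.p.\ open subgroup $V$, and let $U' := hVh^{-1}$, which is again R.p.\ open (conjugation is a topological automorphism and preserves Roelcke precompactness). Then $A = U' h$ is a right coset of $U'$, so $U' \cdot A = A$, and the groupoid identity yields
\[ p(A) = p(U' \cdot A) = p(U') \cdot A = gU' \cdot A = g(U' A) = gA = L(g)(A). \]
Combined with $p(\ES) = \ES = L(g)(\ES)$, this gives $p = L(g)$, so $L$ is surjective, $K \circ L = \mathrm{id}_G$ and $L \circ K = \mathrm{id}_{\+G(M)}$.

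The main obstacle is this last extension step: transferring the identity $p(U) = gU$ from subgroups to arbitrary cosets via the groupoid product formula. This hinges on the fact that every element of $M$ admits a presentation as a right coset of some R.p.\ open subgroup, which is where conjugation-invariance of Roelcke precompactness enters.
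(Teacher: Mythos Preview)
Your proof is correct and follows essentially the same route as the paper's: both establish $p(A)=gA$ for general $A$ by writing $A$ as a right coset of some R.p.\ open subgroup and applying the groupoid identity $p(A)=p(U')\cdot A=gU'\cdot A=gA$. The only differences are cosmetic: the paper re-verifies directly that $R=\{p(U):U\le_o G\}$ is a full filter (in particular producing a right coset of each $U$ via $p^{-1}(U)$) rather than invoking the preceding Remark as you do, and the paper also checks continuity of $L$, which you omit but which the Lemma as stated does not explicitly require.
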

\begin{proof}
$L$ is a   group monomorphism because the action is faithful. $L$ is continuous because we only have check that for fixed $A$,  right coset of $U$, the set $\{ h \colon hA = A\}$ is open in $G$; this  equals $U$. 

  We show   that the inverse $K$   is well defined,  using Lemma~\ref{lem:KechrisNies}. Given $p\in \+ G(M)$ let $R = \{ p(U) \colon U \le_o G\}$. Then   $R$ is directed downward because $p$ is an order isomorphism. For the second hypothesis of Lemma~\ref{lem:KechrisNies}, let $U \le_o G$ be given. Clearly $p(U)$ is  a left coset of   $U$ because $p(U) = p(U) \cdot U$. Now  let $B = p^{-1}(U)$ which is also a left coset of $U$. So $V =B \cdot U \cdot B^{-1}$ is an open subgroup of $G$. Then  $p(V) \in R $ and $p(V)$ is a right coset of $U$, because $p(V) = p(B)\cdot U \cdot B^{-1} = U\cdot B^{-1}$.  
  
  It is routine to check that $L$ and $K$ are inverses. Firstly, we have $\{ K \circ L (g) \}= \bigcap_{U \le_o G} gU = \{g\}$. Secondly,  suppose $A$ is a right coset of $V$. We have  	$L \circ K (p)(A)=\bigcap_{U \le_o G} p(U) A \sub p(V)A = p(A)$. So they are equal,    being both  left  cosets of $V$.
\end{proof}

 \subsection{Some    Borel classes of  closed subgroups}
 Fig.\ \ref{fig:diagram} displays some Borel classes that are invariant under conjugation in $\S$. The arrows denote inclusion between classes.

 \begin{figure}[h] \label{fig:diagram}

\[ \xymatrix { && \text{locally R.p.} && \\
&\text{R.p.}\ar[ur] & & \text{locally compact}\ar[ul]&  \\
\text{oligomorphic}\ar[ur] && \text{compact}\ar[ul]\ar[ur] && \text{discrete}\ar[ul] 
 }
\]
\caption{Some Borel classes of infinite closed  subgroups of $\S$}
  \end{figure}
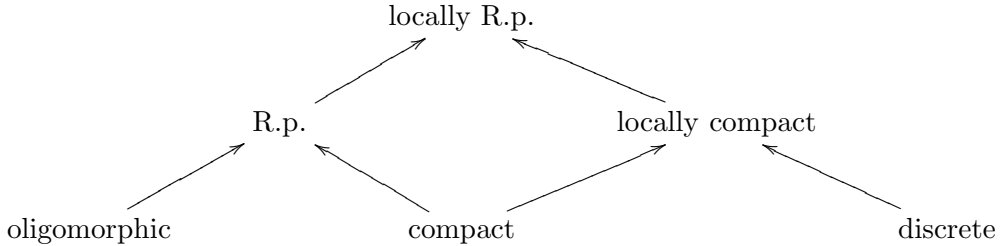
  First we will provide the formal definitions of  the classes in the diagram.

\begin{definition}[Cameron~\cite{Cameron:90}] A closed subgroup $G$ of $\S$ is \emph{oligomorphic} if for each $n$, the action of $G$ on $\NN^n$ has only finitely many orbits.  \end{definition} 
\begin{fact} [\cite{Tsankov:12}, Theorem 2.4]  A closed subgroup $G$ of $\S$ is oligomorphic if, and only if,  $G$ has only finitely many $1$-orbits and $G$ is  Roelcke precompact.  \end{fact}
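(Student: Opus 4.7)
My plan is to prove the two implications separately. The forward direction is essentially immediate: oligomorphy with $n=1$ gives the finitely many $1$-orbits condition. For Roelcke precompactness, fix $U\le_o G$. Since $G$ is closed in $\S$, $U$ contains the pointwise stabiliser $G_F$ of some finite $F\sub\NN$; fix an enumeration $\bar a$ of $F$. The map $g\mapsto(\bar a, g\bar a)$ descends to an injection of $G_F\backslash G/G_F$ into the set of $G$-orbits on $\NN^{2|F|}$, which is finite by oligomorphy. Using that $G_F\sub U$, the natural surjection $G_F\backslash G/G_F\to U\backslash G/U$ then forces the latter to be finite.

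For the converse, the main step is the following key lemma: every open subgroup $V\le_o G$ has only finitely many orbits on $\NN$. To see this, pick a finite $F$ with $G_F\sub V$, so it suffices to bound the orbits of $G_F$ on $\NN$. Since $\NN$ splits into finitely many $G$-orbits by hypothesis, it is enough to fix $b\in\NN$ and show $G_F$ has only finitely many orbits on the single orbit $G\cdot b$. Via the identification $G\cdot b\cong G/G_b$, the orbits of $G_F$ on $G\cdot b$ correspond bijectively to the double cosets in $G_F\backslash G/G_b$. Setting $W:=G_F\cap G_b=G_{F\cup\{b\}}$, which is open, there is a natural surjection from $W\backslash G/W$ onto $G_F\backslash G/G_b$, and the former is finite by Roelcke precompactness applied to $W$.

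Given the key lemma, I would finish by a straightforward induction on $n$. Assuming only finitely many $G$-orbits on $\NN^{n-1}$, the $G$-orbits on $\NN^n$ are parametrised by pairs consisting of an orbit $\mathcal{O}$ on $\NN^{n-1}$, of which there are finitely many by the induction hypothesis, together with an orbit of the open pointwise stabiliser $G_{\bar a}$ on $\NN$, where $\bar a$ is a chosen representative of $\mathcal{O}$. The key lemma applied to $G_{\bar a}$ supplies the required finiteness.

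The main obstacle is really the key lemma; the rest is essentially bookkeeping. The crucial insight there is that one should not try to control $G_F\backslash G/G_b$ by comparing to a double coset space of $G_F$ directly, but rather pass to the \emph{smaller} open subgroup $W=G_{F\cup\{b\}}$ that is adapted both to $F$ and to the new point $b$, and exploit R.p.\ on~$W$ together with the fact that enlarging the acting subgroup can only collapse double cosets.
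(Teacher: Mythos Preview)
The paper does not give its own proof of this statement: it records the fact with a citation to Tsankov and moves on. So there is nothing in the paper to compare your argument against.

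That said, your proof is correct and is essentially the standard one. The forward direction is fine; the identification of $G_F\backslash G/G_F$ with a set of $2|F|$-orbits is exactly right. For the converse, the key lemma and its proof via the surjection $W\backslash G/W \twoheadrightarrow G_F\backslash G/G_b$ with $W=G_{F\cup\{b\}}$ is the heart of the matter, and the induction wrapping it up is routine. One tiny cosmetic point: in the key lemma you don't actually need to pass to $G_F\subseteq V$ at the start, since $V$ itself already contains some $G_F$ and you immediately work with $G_F$ anyway; but this is harmless.
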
 Note that being oligomorphic is a permutation group notion, while the other notions are about topological groups. A group $G$ (always closed subgroup of $\S$) is called \emph{quasi-oligomorphic}~\cite{Nies.Schlicht.etal:21} if it is homeomorphic to an oligomorphic group. The topological inverse limits of such groups are up to homeomorphism  the Roelcke precompact groups~\cite[Prop 2.2]{Tsankov:12}. This implies that the only R.p.\ abelian groups are the compact ones, using the fact that an oligomorphic group has finite centre. 

The automorphism group of a rooted infinitely branching tree is R.p.\ but not quasi-oligomorphic. It is   R.p.\ by~\cite[Th.\ 7 and discussion thereafter]{Zielinski:21}. 
Alternatively, one can write it as an inverse limit of the inverse system  $(G_n, p_n)$, where $G_n $ is the group of automorphisms of the ball of radius $n$ around the root, and $p_n \colon G_{n+1} \to G_n$ is the restriction map. Clearly $G_n$ is oligomorphic, in fact can be written as an iterated wreath product $(\ldots ((\S\wr \S) \ldots  \wr  \S) $, where $\wr$ denotes the permutation wreath product given the natural action of $\S$ on $\NN$.
So the automorphism group of  an infinitely branching tree is locally R.p., and not in any of the subclasses in Fig.\ \ref{fig:diagram}. 

It is instructive to  note that the  two classes in the middle level of Fig.\ \ref{fig:diagram}  intersect to  the class of compact groups. This was observed in~\cite[Section~2]{Tsankov:12}, and in fact holds without the hypothesis of being totally disconnected.
\begin{fact} If $G$ is t.d.l.c.\ and Roelcke precompact, then $G$ is compact. \end{fact}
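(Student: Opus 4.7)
The plan is to use van Dantzig's theorem to obtain a compact open subgroup $U \le G$, and then exploit Roelcke precompactness to cover $G$ by finitely many compact pieces.

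Since $G$ is totally disconnected and locally compact, by van Dantzig's theorem $G$ has a neighbourhood basis at the identity consisting of compact open subgroups; pick any such $U \le_o G$. Because $G$ is Roelcke precompact, the action of $U$ on its own left cosets has only finitely many orbits, equivalently, there are only finitely many double cosets $U g_1 U, \ldots, U g_n U$, and
\[ G = \bigcup_{i=1}^{n} U g_i U. \]

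Next I would observe that each double coset $U g_i U$ is compact: it is the image of the compact space $U \times U$ under the continuous multiplication map $(u,v) \mapsto u g_i v$. Hence $G$ is a finite union of compact sets, so $G$ itself is compact.

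There is no real obstacle here; the argument is essentially a two-line consequence of van Dantzig together with the definition of Roelcke precompactness. The only thing to be alert about is that the double coset description of Roelcke precompactness (finitely many $U g U$) is equivalent to the one given in the paper (finitely many $U$-orbits on the left cosets of $U$), which is immediate.
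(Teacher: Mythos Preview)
Your proof is correct. Both you and the paper start with van Dantzig to get a compact open subgroup and then use Roelcke precompactness to write $G$ as a finite union of double cosets, but you diverge at the last step: the paper argues that each orbit of $V$ on its left cosets is finite (since the stabiliser $V\cap gVg^{-1}$ is open, hence of finite index in the compact group $V$), concluding that $V$ has finite index in $G$; you instead observe directly that each double coset $Ug_iU$ is the continuous image of the compact set $U\times U$, hence compact, so $G$ is a finite union of compact sets. Your route is slightly more elementary and avoids the orbit-stabiliser step; the paper's route yields the marginally stronger conclusion that the compact open subgroup has finite index.
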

\begin{proof} Let $V$ be a compact open subgroup of $G$ by van Dantzig's theorem. We consider the left action of $V$ on its   left cosets  in $G$. For each such coset $gV$, the stabiliser $\{x \in V \colon xgV= gV\}$ is open because  it equals  $V \cap \{ x \colon x^g \in V\}$. So it has finite index in $V$, hence  the  orbit of $gV$ under this action  is finite. Also, since $G$ is R.p.\  there are only finitely many orbits. Thus $V$ has finite index in~$G$.
\end{proof}

   \begin{remark}  In a  [quasi]-oligomorphic group, every open subgroup is R.p.  In a t.d.l.c.\ group $G$, the R.p.\ open subgroups coincide with the  compact open  ones. So the domain of the   meet groupoid  $\+W(G)$  consists of the compact open cosets. 
\end{remark}

  For a locally compact group $G$, the group $\Aut(G)$  carries the \emph{Braconnier topology} (see  \cite[Appendix I]{Caprace.Monod:11}). It is given by   neighbourhoods of the identity of the form $C(K,U)$, for any compact $K$ and open nbhd $U$ of $1$; here \bc $\aaa \in C(K,U)$ iff $\aaa(x)\in Ux \lland \aaa^{-1}(x)\in Ux$. \ec   If $G$ is discrete, then the Braconnier topology coincides with  the   topology of pointwise convergence on  $\Aut(G)$, because we can assume that $U $ is the trivial group, and of course each compact set is finite.   Clearly the   Braconnier topology   makes the action $\Aut(G) \times G \to G$ continuous. It is Polish~\cite[Appendix I]{Caprace.Monod:11}).  So by \cref{prop:coarsest} the Braconnier topology coincides with the one given by Criterion~\ref{fact:embed}.  A direct proof of this fact can be found in~\cite[Section 9]{LogicBlog:22}.

  \subsection{When is $\Inn(G)$ closed in $\Aut(G)$?}
The group of inner automorphisms $\Inn(G)$ is a normal subgroup of $\Aut(G) $. It  is of interest whether $\Inn(G)$ is closed in $\Aut(G)$, for in that case   the outer automorphism group $\Out(G)= \Aut(G)/ \Inn(G)$ is  a Polish group. 
  
 Wu~\cite{Wu:71}  gave an example of a discrete  group $L$ such that $\Inn(L)$ is not closed in $\Aut(L)$. A slight modification yields a nilpotent-2 group of exponent 3.   (Curiously, this group is finite automaton  presentable, and in fact similar to the groups   studied  recently in \cite{Nies.Stephan:24}.)
  
  \begin{example}[Similar to Example 4.5 in~\cite{Wu:71}] Let $G$ be generated by  elements of  $a_i,b_i, c$ order $3$ ($i \in \NN)$, where $c$ is central, with the relations  $b_ia_i b_i^{-1}= a_ic $, $[a_i,a_k] = [a_i,b_k]= [b_i,b_k]= 1$ for $i \neq k$.
  An automorphism $\Phi$ of $L$ is given by $\Phi(a_i)= a_i c$, $\Phi(b_i)= b_i$, and $\Phi(c) =c$. It  is not inner, but in the closure of $\Inn(G)$.
  \end{example}
  \begin{proof} To  check that $\Phi$ is indeed in $\Aut(G)$, note that  $c$ can be omitted from the list of generators. Given a   word $w$ in $a_i,b_k$  where each letter occurs with exponent $1$ or $2$, use that  $\Phi(w)= wc^{k \mod 3}$ where $k$ is the number of occurrences of   $a_i$'s. The inverse of $\Phi$ is given by $a_i \mapsto a_i c^{-1}$ and the rest as before. 
  
Write  $g_n = \prod_{i< n} b_i$. We have  $g_n a_i g_n^{-1}= a_ic$ for each $i< n$, and $g_n b_ig_n^{-1} = b_i$ for each $i$. Letting $\Phi_n$ be conjugation by $g_n$, we have $\lim_n \Phi_n = \Phi$ in $\Aut(G)$. 
For each $g\in G$, conjugation by $g$ fixes almost all the generators. So $\Phi$ is not inner. 
  \end{proof}
  
 Things look brighter on the left side of Figure~\ref{fig:diagram}. 
  \begin{thm}[Thm.\ 2.7 in \cite{Nies.Paolini:24}]  \label{th: Rp case} Given a  Roelcke precompact group closed subgroup $G$ of $\S$, the normal subgroup  $\Inn(G)$ is closed in $\Aut(G)$. \end{thm}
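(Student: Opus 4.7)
The plan is to show that $\Inn(G)$ is sequentially closed in the Polish group $\Aut(G)$. Assume $\Phi_n := c_{g_n} \to \Phi$ in $\Aut(G)$ for some sequence $g_n \in G$. By the definition of the topology on $\Aut(G)$, for each $A \in \+W(G)$ one has $g_n A g_n^{-1} = \Phi(A)$ eventually; in particular, for each open subgroup $U \le_o G$, $g_n U g_n^{-1} = \Phi(U)$ eventually, so $g_n$ lies eventually in the coset $N(U, \Phi(U)) := \{k \in G : kUk^{-1} = \Phi(U)\}$ of $N_G(U)$. The goal is to produce $h \in G$ with $\Phi = c_h$ by building a full filter on $\+W(G)$ from $(g_n)$ and invoking the bijection of \cref{lem:KechrisNies}.

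The crucial quantitative input from Roelcke precompactness is that $[N_G(U):U] < \infty$ for every open $U$. Indeed, $N_G(U)$ is an open subgroup of $G$, hence R.p.\ itself (R.p.\ is inherited by open subgroups), so R.p.\ applied inside $N_G(U)$ to the subgroup $U$ gives $|U \backslash N_G(U)/U| < \infty$; but for $g \in N_G(U)$ one has $UgU = gU$, so this count equals $[N_G(U):U]$. Consequently, the sequence of left cosets $(g_n U)_n$ ranges over a finite set of left $U$-cosets inside $N(U,\Phi(U))$, and can be made eventually constant by thinning.

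Now fix a neighbourhood basis $U_1 \supseteq U_2 \supseteq \cdots$ of $1_G$ by open subgroups, and use a standard diagonal argument to extract a subsequence $(g_{n_k})$ along which $g_{n_k} U_m$ is eventually constant in $k$ for every fixed $m$. For any open $V$, pick $m$ with $U_m \le V$; then $g_{n_k} V$ is also eventually constant, say equal to $C_V \in \+W(G)$. The family $\{C_V\}_{V \le_o G}$ is closed under intersection because $g_{n_k}(V_1 \cap V_2) = g_{n_k} V_1 \cap g_{n_k} V_2$, and so generates a filter. Since $C_V = g_{n_k} V = (g_{n_k} V g_{n_k}^{-1}) g_{n_k} = \Phi(V) g_{n_k}$ for large $k$, each $C_V$ is simultaneously a left coset of $V$ and a right coset of $\Phi(V)$; as $\Phi$ permutes the open subgroups of $G$, the filter is full in the sense of \cref{def:FF}.

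By \cref{lem:KechrisNies} there is a unique $h \in G$ with $hV = C_V$ for every open $V$, which forces $h^{-1} g_{n_k} \in U_m$ for every $m$ and all large $k$; hence $g_{n_k} \to h$ in $G$. A routine check, which one would also want to record, shows that the conjugation map $c : G \to \Aut(G)$, $g \mapsto c_g$, is continuous: for each $A \in \+W(G)$ the stabiliser $\{k \in G : kAk^{-1} = A\}$ is an open subgroup of $G$, so $g_n \to g$ in $G$ forces $c_{g_n}(A) = c_g(A)$ eventually. Therefore $c_{g_{n_k}} \to c_h$ in $\Aut(G)$, and since also $c_{g_{n_k}} \to \Phi$, we conclude $\Phi = c_h \in \Inn(G)$. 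The main obstacle is the index bound $[N_G(U):U] < \infty$: this is the one place where Roelcke precompactness is essential, and without it the cosets $g_n U$ could drift through infinitely many values, preventing extraction of a subsequence converging to a candidate $h$ in $G$.
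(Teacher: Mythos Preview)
Your proof is correct, but it takes a different route from the paper's. The paper does not show sequential closure; instead it proves that $\Inn(G)$ is $G_\delta$ in $\Aut(G)$ and then invokes the general fact that a $G_\delta$ subgroup of a Polish group is closed. Concretely, for each $\Phi\in\Aut(G)$ the paper builds a finitely branching tree $T_\Phi$ whose nodes at level $n$ are tuples $\langle(B_i,C_i)\rangle_{i<n}$ with $\Phi(A_i)=B_i\cdot A_i\cdot C_i^{-1}$ and $\bigcap_{i<n}(B_i\cap C_i)\neq\emptyset$, and shows that $\Phi$ is inner iff $T_\Phi$ has an infinite path; by K\"onig's Lemma this is equivalent to every level being nonempty, which is a $G_\delta$ condition on $\Phi$. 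The finite branching comes from the fact that $\mathrm{Mor}(U,V)$ is finite for all open $U,V$, proved via $|\mathrm{Mor}(U,U)|=|U\backslash G/U|<\infty$.

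The two arguments rest on the same combinatorial core: your bound $[N_G(U):U]<\infty$ is exactly the statement $|\mathrm{Mor}(U,U)|<\infty$, since a coset $gU$ is simultaneously a left and right coset of $U$ iff $g\in N_G(U)$. Your approach is more hands-on: you extract a convergent subsequence of $(g_n)$ in $G$ by a diagonal argument and identify the limit via \cref{lem:KechrisNies}, which makes the witness $h$ explicit and avoids the Baire-category fact about $G_\delta$ subgroups. The paper's approach is less constructive but yields the stronger structural statement that $\Inn(G)$ is $G_\delta$, and its tree formulation is set up so that the same argument adapts verbatim to the broader locally R.p.\ case under the extra hypothesis that each R.p.\ open subgroup has finite index in its normaliser (see the remark following the theorem).
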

  \begin{proof}  We  follow the proof  for  oligomorphic groups~\cite[Theorem   3.3(i)]{Nies.Paolini:24}. The   open cosets of $G$ form a groupoid (a small category for which  all morphisms are invertible). For subgroups $U,V$ of $G$, by $\mathrm{Mor}(U,V)$ we denote the set of  right cosets of $U$   that are left cosets of $V$.

  \n {\bf Claim 1.} \emph{$\mathrm{Mor}(U,V)$ is finite, for any open subgroups $U,V$  of $G$.} 
  
  \n To see this, first note that $\mathrm{Mor}(U,U)$ is finite because each   coset in it is a double coset of $U$. Now suppose that there is a $B \in \mathrm{Mor}(U,V)$. There is a bijection $\mathrm{Mor}(U,U) \to \mathrm{Mor}(U,V) $ via $A \mapsto A \cdot B$. Thus $\mathrm{Mor}(U,V)$ is finite as required. This shows the claim.
  
  A   fact  in the  theory of Polish groups  states that each $G_\delta$ subgroup of a Polish group is closed (see \cite[Prop.\ 2.2.1]{Gao:09}); this relies  on the Baire category theorem. So  it suffices to show that $\Inn(G)$ is  a $G_\delta$ subset of  $\Aut(G)$.   Let $(A_n)_{n \in \omega}$ be a listing of the open cosets of $G$ without repetition. Given $\Phi \in \Aut(G)$,  we will define  a set $T_\Phi$ of strings  over some infinite alphabet which is closed under prefixes, and thus can be seen as  a  rooted tree. The alphabet consists of   pairs {of open cosets of $G$. To define  the $n$-th level we think of approximating some $g\in G$ such that $g A_i g^{-1} = \Phi(A_i)$ for $i< n$. At the $n$-level we   have  certain    pairs $(B_i,C_i)$ of approximations, where $i<n$:} 
 
 \[ T_\Phi= \{ \langle (B_i, C_i )\rangle_{i<n} \colon \Phi(A_i)= B_i \cdot A_i \cdot C_i^{-1}   \land \bigcap_{i <  n} (B_i \cap C_i) \neq \emptyset \}.\]
The domains of such open cosets $B_i$ and $C_i$ are determined by the condition that $\Phi(A_i)= B_i \cdot A_i \cdot C_i^{-1}$. By Claim 1, there are only finitely many possibilities for cosets with given  left and right domain. So the tree   $T_\Phi$ is   finitely branching.

  \n {\bf Claim 2.} \emph{$\Phi \in \Aut(G)$ is inner iff  $T_\Phi$ has an  infinite path.}    \newline
For the verification see Claim 2 in the proof of~\cite[Theorem   3.3(i)]{Nies.Paolini:24}, which goes through for   the general setting of R.p.\ groups. Assuming  the claim, we conclude the argument as follows. By K\"onig's Lemma, $T_\Phi$ has an infinite path iff each of its levels (i.e., strings of a   length $n$)  is nonempty. Whether the $n$-th level of $T_\Phi $ is nonempty only depends on the values   $\Phi(A_0), \ldots, \Phi(A_{n-1})$, so the set of such $\Phi$ is open in $\Aut(G)$. Thus the condition that each level is nonempty is $G_\delta$.  So  the claim   shows that the subgroup  $\Inn(G)$ is $G_\delta$ in $\Aut(G)$. 
  \end{proof}

  \begin{remark} If $G$ is R.p., then the topology that  $\Inn(G)$ inherits from $\Aut(G)$ is the   one given by quotient topology of $G$ by the centre. For, the centre $Z=Z(G)$   is closed. We have a Polish   topology on 
 $ G /Z$ by declaring $U Z /Z$ open iff $UZ$ is open in $G$. 
 The canonical  isomorphism $L \colon G/Z \to \Inn(G)$ is continuous. Since $\Inn(G)$ is Polish as a closed subgroup of $\Aut(G)$,  $L$ is a homeomorphism by a standard result in the theory of Polish groups (see e.g., \cite[2.3.4]{Gao:09}).  \end{remark} 
  
 \begin{remark}  Suppose that $G$ is merely locally R.p., and that   each R.p.\ open subgroup $U$  has finite index in its normaliser. Then $\mathrm{Mor}(U,U)$  is finite. So the proof of Th.\ \ref{th: Rp case} goes through. 
 \end{remark} 
 
  \begin{example}   $G=\Aut(\QQ, <)$ is oligomorphic.  The map $\pi(x) = -x$ is in the normaliser $N_G$ and not in $G$. A continuous    automorphism of $G$ not in $\Inn(G)$ is therefore given by $g \mapsto \pi^{-1} \circ  g \circ \pi$.  This is the only one up to $\Inn(G)$:  it is known that  $\Out(G)\cong N_G/G$ which has two elements \cite[Cor.\ 1.6]{Paolini:24}. \end{example}

  The following is~\cite[Theorem 3.1]{Nies.Paolini:24}.  \begin{thm} Let $G$ be oligomorphic. Then $N_G/G   $   is profinite,  where $N_G$ is the normaliser of $G$ in $\Sym(\omega)$.  \end{thm}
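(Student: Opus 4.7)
The plan is to identify $N_G/G$ with the automorphism group of the orbit diagram of $G$, and then observe directly that this group is profinite. For each $n\ge 1$ let $T_n$ be the (finite, by oligomorphy) set of $G$-orbits on $\NN^n$, and set $T = \bigsqcup_n T_n$. Any function $f\colon \{1,\dots,n\}\to\{1,\dots,m\}$ induces $f^*\colon \NN^m\to\NN^n$ via $(b_1,\dots,b_m)\mapsto (b_{f(1)},\dots,b_{f(n)})$, and this descends to a map $T_m\to T_n$. Declare $\mathrm{Aut}(T)$ to be the group of permutations of $T$ preserving each $T_n$ setwise and commuting with every induced $f^*$. Since $\mathrm{Aut}(T)$ sits inside the profinite group $\prod_n \mathrm{Sym}(T_n)$ as the solution set of a countable family of closed conditions, $\mathrm{Aut}(T)$ is itself profinite.

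Conjugation gives a continuous homomorphism $\phi\colon N_G\to \mathrm{Aut}(T)$: any $\pi\in N_G$ permutes $G$-orbits compatibly with every $f^*$, and $\phi(\pi)_n$ is determined by the values of $\pi$ on finitely many orbit representatives. First I would verify $\ker\phi = G$. Suppose $\pi\in N_G$ fixes every $G$-orbit; enumerate $\NN = \{x_1, x_2,\dots\}$ and set $F_n = \{x_1,\dots,x_n\}\cup \pi^{-1}(\{x_1,\dots,x_n\})$. By hypothesis some $g_n\in G$ satisfies $g_n|_{F_n} = \pi|_{F_n}$; then $g_n\to\pi$ in the group topology on $\S$, so $\pi\in G$ by closedness.

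Next I would show that $\phi$ is surjective. Given $\sigma\in \mathrm{Aut}(T)$, I plan to build $\pi\in \S$ by back-and-forth, maintaining a finite partial bijection $\pi_s$ between subsets of $\NN$ such that for every tuple $\bar a$ from its domain, the $G$-orbit of $\pi_s(\bar a)$ equals $\sigma$ applied to the $G$-orbit of $\bar a$. When extending by a new domain point $x$, the prescriptions from $\sigma$ on the various tuples containing $x$ should assemble into a single $G$-type over the current image; the consistency of this type is exactly $\sigma$ commuting with the non-injective $f^*$ (the diagonal identification maps), after which $\omega$-categoricity of the canonical structure of $G$ supplies a realising point. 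The resulting $\pi$ sends every $G$-orbit to a $G$-orbit, so $\pi G\pi^{-1}$ and $G$ are closed subgroups of $\S$ with identical orbital diagrams; hence they coincide, and $\pi\in N_G$ with $\phi(\pi) = \sigma$.

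Combining, $\phi$ descends to a continuous bijection $\bar\phi\colon N_G/G\to \mathrm{Aut}(T)$ between Polish groups; by the open mapping theorem, $\bar\phi$ is a topological isomorphism, so $N_G/G$ is profinite. The main obstacle is expected to be the back-and-forth: one must check that the structure on $T$ coming from all functorial pullbacks $f^*$ (not merely the projections) is rich enough that any $\sigma\in \mathrm{Aut}(T)$ prescribes a consistent system of $G$-types at every extension. The non-injective $f^*$ are essential: for $G = \S$ one can already find permutations of $T_2$ that commute with projections but do not come from any element of $N_G = \S$.
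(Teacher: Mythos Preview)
Your proposal is correct and follows essentially the same strategy as the paper: identify $N_G/G$ with a profinite group of coherent permutations of the orbit diagram, check that the kernel of the natural map $N_G\to\Aut(T)$ is exactly $G$ via closedness, and use $\omega$-categoricity of the canonical structure to prove surjectivity.

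The implementations differ in two minor respects worth noting. First, the paper takes as target the inverse limit $L=\varprojlim_n \Aut(\mathcal E_n)/\Aut(M_n)$, where each factor is already a quotient of a group of permutations of $\omega$; this bakes in at each level the realisability by an actual permutation, whereas your $\Aut(T)$ is defined purely combinatorially and you must (and do) supply the extra argument that commuting with the non-injective $f^*$ is exactly what forces realisability. Second, for surjectivity the paper argues via compactness---writing down a theory with a new function symbol $\pi$ and the orbit-permutation axioms, taking a countable model, and pulling back along the isomorphism with $M$ guaranteed by $\aleph_0$-categoricity---while you run a direct back-and-forth. These are equivalent in strength; your version is a bit more hands-on and makes the role of the diagonal maps explicit, while the paper's compactness argument is shorter to write but hides the same combinatorics inside the consistency check for $T_f$.
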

\begin{proof}  We  use a model-theoretic technique: an oligomorphic $G$ equals $\Aut(M)$ for the so-called  \emph{canonical structure} for $G$ (made out of the $n$-orbits for all $n \ge 1$). This $M$ is $\aleph_0$-categorical, which implies that the $G$-invariant relations coincide with the  first-order definable relations. Also, $N_G= \Aut(\+ E_M)$ where $\+ E_M$ is a reduct of $M$, its so-called \emph{orbital structure}, which consists of   the $n$-orbit equivalence relations for each $n$. It suffices to show  that $\mrm{Aut}(\mathcal{E}_M)/\mrm{Aut}(M)$ is a profinite group. 

 $M$ is interdefinable with $M_{\Aut(M)}$, so we may assume that   for every $0 < n < \omega$, $M$ has   predicates $P^n_1, ..., P^n_{k(n)}$ for each of its $n$-orbits, and no other relations or functions. For   $n \leq \omega$ let $M_n$ be the structure obtained after removing the predicates of arity greater than $n$, and let $\mathcal E_n:= \mathcal E_{M_n}$ be the corresponding orbital structure.
	
\smallskip \noindent	 For $k \leq m \leq \omega$ the identity map  $\mathcal E_{m} \to \mathcal E_k $  induces a map \bc $q_{m,k} \colon \Aut(\+ E_m )/ \Aut (M_m) \to \Aut(\+ E_k )/\Aut(M_k)$.  \ec Writing $p_n= q_{n+1,n}$ this yield an inverse system 
	$$(\mrm{Aut} (\+ E_n)/\Aut(M_n) , p_n)_{n \in \mathbb N}.$$ 
	The group $\mrm{Aut}(\mathcal{E}_n)/\mathrm{Aut}(M_n)$ is canonically isomorphic to a subgroup of $S_{k(n)}$  (the group of permutations of $\{1, \ldots, k(n)\}$): an isomorphism is induced by  mapping  $\pi \in \mrm{Aut}(\mathcal{E}_n)$ to the permutation $\alpha\in S_{k(n)}$ such that $\pi(P^n_i)= P^n_{\alpha(i)}$ for each $i \leq k(n)$. Let 
  $$  L: =  \varprojlim_n (\mrm{Aut} (\+ E_n)/\Aut(M_n) , p_n)$$
  be the inverse limit in the category of topological groups, where the finite groups $\mrm{Aut}(\mathcal{E}_n)\mathrm{Aut}(M_n)$ carry the discrete topology.  Every $f \in L $ can be concretely seen as a certain sequence of permutations $(\alpha_n)_{0< n < \omega}$ where $\alpha_n\in S_{k(n)}$. 
 \emph{We claim  that $R \cong L$ via the continuous homomorphism  $F\colon R\to L$ induced by the   maps $q_{\omega, n} : R \to \Aut(\+ E_n )/\Aut(M_n)$}. To verify this,  we   invoke the universal property of the inverse limit. To see that   $F$ is 1-1, suppose $\pi \in \Aut(\+ E_\omega) \setminus \Aut(M)$. Then there is $n$ and $i \leq k(n)$  such that $\pi(P^n_i)= P^n_j$ with $i \neq j$. So the image of $\pi \Aut(M)$ in  $\mathrm{Aut} (\+ E_n)/\Aut(M_n)$ is not equal to the identity. Thus $F(\pi \Aut(M))$ is not equal to the identity. 
	
\smallskip \noindent		To see that $F$ is onto,  given $f$ let $(\alpha_n)_{0< n < \omega}$ be as above, and  let $T_f$ be the theory in the language of $M$ with an additional function symbol $\pi$  with the following axioms: 
  
 \begin{enumerate}  \item  $Th(M)$;
\item $\pi$ is a permutation; 
  
\item  $\pi(P^n_i)= P^n_{\alpha_n(i)}$,   for each $n$ and each $i \leq k(n)$.\end{enumerate}
Note that $T_f$ is consistent  by the Compactness Theorem, using  the assumption that each $\alpha_n$ is induced by an automorphism $\pi_n$ of $\+ E_n$. Let $N$  be  any countable model   of $T_f$, and let $N'$ be the reduct of $N$ to the language of $M$. Since $M$ is $\aleph_0$-categorical, there is an isomorphism $h \colon M \to N'$. We have  $g=h^{-1} \circ \pi^N \circ h \in \Aut(\+ E_M) $ and $F( g \Aut(M))= f$. As $f\in L$  was arbitrary this shows that $F$ is onto. 

\smallskip \noindent The group isomorphism $F^{-1}\colon L\to R$ is clearly Borel, so it  is   continuous as a Borel (and hence Baire measurable) homomorphism between Polish groups; see, e.g., \cite[Th.\ 2.3.3]{Gao:09}.

\smallskip \noindent If $M$ has finite language then the inverse system above is eventually constant, and hence its  inverse limit is finite.
\end{proof}

\begin{remark}[needs checking] We note that the profinite group  $L$ can be ``read off" the theory of $M$. Assume for simplicity that all orbits are infinite.  From $\mrm{Th}(M)$ we can obtain the ternary  function $\theta$  on $\omega$      such that, for each positive $n$,   $r \leq k(n)$ and $i \leq n+1$,   the relation obtained by erasing the $i$-th component of the tuples in the $(n+1)$-orbit $P^{n+1}_r$ equals $P^n_{\theta(n,r,i)}$. Then   $L$  consists of the sequences of permutations  $(\alpha_n)_{0< n < \omega}$ that cohere with taking these projections in the sense  that, for each $n,r,i$ as above, we have \[\aaa_n(\theta(n,r,i))= \theta (n,\aaa_{n+1}(r),i).\]
For  let $N$ be the model such that $(P^n_i)^N= (P^n_{\aaa_n(i)})^M$ for each $n$ and $i \le k(n)$. Then $N \models \mrm{Th}(M)$ because the theory has quantifier elimination. Since the theory is $\aleph_0$-categorical, there is an isomorphism $\pi \colon M \to N$. Then $\pi \in \Aut (\+ E_M)$ and $\pi \Aut(M)$ is mapped to the sequence  $(\alpha_n)_{0< n < \omega}$. 
\end{remark}

By \cite{Evans.Hewitt:90} together with \cite{Nies.Paolini:24}, every separable profinite group can be realised  as $\Out(G)= N_G/G$: the structure constructed in~\cite{Evans.Hewitt:90} has ``no algebraicity'', which implies by \cite[Th.\ 4.7]{Nies.Paolini:24} that each automorphism of $G$ is  given as conjugation by a permutation  $\pi \in N_G$. 
 
The following is~\cite[Theorem 3.10]{Nies.Paolini:24}.

  \begin{thm} \label{th:compact} Let $G$ be oligomorphic.  Let $R \colon N_G \to \Aut(G)$ be the homomorphism defined by  $R(\pi)(g) = \pi \circ g \circ \pi^{-1}$. 
  
  \n 
 (i) The subgroup   $\mrm{range} (R)$ is open in $\Aut(G)$. 
 
 \n (ii) The group      $\Out(G)$ is t.d.l.c., having the compact  group  $\mrm{range} (R)/\Inn(G)$ as an open subgroup.   \end{thm}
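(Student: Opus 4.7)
The plan is to exhibit an explicit open neighbourhood of the identity in $\Aut(G)$ contained in $\mathrm{range}(R)$, and then derive (ii) by combining (i) with the closedness of $\Inn(G)$ from Theorem~\ref{th: Rp case} and the profiniteness of $N_G/G$.

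For (i), I would pick representatives $n_1, \ldots, n_k$ of the (finitely many) orbits of $G$ on $\NN$ and set $V_i = G_{n_i}$, which lies in $\+ W(G)$ since each $V_i$ is an open subgroup. Then
\[ U = \{\Phi \in \Aut(G) \colon \Phi(V_i) = V_i \text{ for } i = 1, \ldots, k\} \]
is a basic open subgroup of $\Aut(G)$ in the topology from Criterion~\ref{fact:embed}. Given $\Phi \in U$, define $\pi \in \Sym(\NN)$ orbitwise by $\pi(g \cdot n_i) := \Phi(g) \cdot n_i$. The hypothesis $\Phi(V_i) = V_i$ makes $\pi$ well-defined and bijective on each orbit, and a direct computation gives $\pi \circ g \circ \pi^{-1} = \Phi(g)$ for every $g \in G$. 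Hence $\pi \in N_G$ and $R(\pi) = \Phi$, so $U \subseteq \mathrm{range}(R)$, proving (i).

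For (ii), the kernel of $R$ is $C_G$, so $R$ descends to a continuous injective homomorphism $\bar R \colon N_G/C_G \to \Aut(G)$ with image $\mathrm{range}(R)$. By (i) this image is open in $\Aut(G)$ and hence Polish in the subspace topology, so the open mapping theorem for Polish groups yields that $\bar R$ is a topological isomorphism onto its image. Since $\Inn(G) = R(G)$ is closed in $\Aut(G)$ by Theorem~\ref{th: Rp case}, $\Out(G)$ is Polish and $\mathrm{range}(R)/\Inn(G)$ is an open subgroup of $\Out(G)$. Via $\bar R$ this quotient identifies with $N_G/(GC_G)$. The centraliser $C_G$ is finite, so $GC_G$ is a finite union of cosets of $G$ and hence closed, while $GC_G/G$ is a finite subgroup of the profinite group $N_G/G$. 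Therefore $N_G/(GC_G)$ is profinite, providing $\Out(G)$ with a compact open subgroup and confirming that $\Out(G)$ is t.d.l.c.

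The main technical step is the construction of $\pi$ in (i): one must verify that fixing the specific stabilisers $V_i$ already forces $\Phi$ to act on every coset of each $V_i$ in a coherent way coming from a single permutation of $\NN$, and that this permutation normalises $G$. Once (i) is in hand, (ii) is a routine combination of the open mapping theorem, the standing closedness result for $\Inn(G)$, and the profiniteness of $N_G/G$ already established in this section.
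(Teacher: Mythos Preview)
Your proposal is correct and follows essentially the same approach as the paper: the paper also takes the open subgroup $\{\Phi:\Phi(G_{a_i})=G_{a_i}\}$ for orbit representatives $a_i$, and defines $\pi$ by $\Phi([b,a_i])=[\pi(b),a_i]$, which unwinds to exactly your formula $\pi(g\cdot a_i)=\Phi(g)\cdot a_i$. For (ii) the paper likewise identifies $\mathrm{range}(R)/\Inn(G)$ with $N_G/GC_G$ via $\ker R=C_G$, notes $C_G$ is finite so $GC_G$ is closed, and deduces compactness from the profiniteness of $N_G/G$.
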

   
   \begin{proof}
    \n {\bf  (i)}.  For $b,a \in \omega$, we will write $[b,a]$ for the coset $\{ g \in G \colon b=g(a)\}$.   Note  that for $a,b,c \in \omega$,  if $[c,a]\supseteq [b,a] \neq \emptyset$ then $b=c$.

\smallskip 
\noindent Let  $a_1, \ldots, a_n$ represent the $1$-orbits of $G$. It suffices to show that the open subgroup $\{ \Phi \in \Aut(G) \colon \, \Phi(G_{a_i})= G_{a_i} \text{ for each } i\}$ is contained in $\mrm{range} (R)$.  
 Suppose that   $\Phi $ is in this subgroup.  Since $\Phi$ fixes each subgroup $G_{a_i}$, for each coset $D=[b,a_i]$, $\Phi(D)$ is also a left coset of $G_{a_i}$. Hence, by~\cref{coset and pairs}, $\Phi(D)$  can be written in the form $[d,a_i]$. 
 Define a function $\pi_\Phi  \colon \omega \to \omega$  by 
 \[ \pi_\Phi(b)= d \text { if } \Phi([b,a_i]) = [d,a_i], \text{ where } [b,a_i ] \neq \emptyset. \]
 Clearly  $\pi_\Phi$ and $\pi_{\Phi^{-1}}$ are inverses. So $\pi_\Phi\in \Sym (\omega)$.    The following establishes (i).
 
 \smallskip
   \n {\bf Claim.} \emph{  $\pi_\Phi \in N_G$ and $R(\pi_\Phi) = \Phi$. } \newline 
    \smallskip
 Write $\pi=\pi_\Phi$. To verify the claim, we first show that   $\Phi([r,s]) = [\pi(r),\pi(s)]$  for each $r,s \in \omega$ such that $[r,s]\neq \emptyset$. 
Note that by hypothesis on $\Phi$ and since $G_{a_i} = [a_i,a_i]$, we have  $\pi(a_i)= a_i$ for each $i$. Also note that $\Phi([b,c])^{-1}= \Phi([c,b])$  for each $b,c \in \omega$. 
 Let now $i$ be such that $r$ and $s$ are in the $1$-orbit of $a_i$. We have $[r,s] = [r,a_i]\cdot [a_i,s]= [r,a_i]\cdot [s,a_i]^{-1}$. So 
 \[\Phi([r,s])= \Phi([r,a_i])\cdot \Phi([s,a_i])^{-1}= [\pi(r), a_i]\cdot [\pi(s, a_i]^{-1} = [\pi(r), \pi(s)]. \]

Next,  for each $h \in G$ we have $\{ h\}= \bigcap_{s\in \omega}  hG_s$, and $hG_s = [h(s), s]$.   Then 
 $ \{\Phi(g)\} = \bigcap_{s\in \omega} \Phi( [g(s), s])= \bigcap_{s\in \omega} [\pi(g(s)), \pi (s)] = \bigcap_{t\in \omega}  [\pi(g(\pi^{-1}(t))), t]= \{ g^\pi\}$,
 as required.  This shows the claim. 
  
\smallskip 
\noindent  {\bf  (ii)}.   The kernel of $R$ is the centraliser $C_G$ of $G$ in $\Sym(\omega)$, which is a normal subgroup of $N_G$.
This consists of the  permutations that are definable in the canonical structure for $G$, so $C_G$ is a finite.   $GC_G$ is a normal subgroup of $N_G$.  $GC_G$ is also closed in $\Sym(\omega)$ because  $G$ has finite index in it. Since $C_G$ is the kernel of $R$ and $R(G)= \Inn(G)$, we have $R^{-1}(\Inn(G)) = G C_G$. Thus   $\mrm{range}(R)/\Inn(G)$ is  topologically  isomorphic to $N_G/GC_G$. By~\ref{th:compact} $N_G/G$   is compact. So $N_G/GC_G$ is also compact as its topological quotient. 
   \end{proof}

   To summarise, we have  for each oligomorphic group $G$, 
   \[ \Inn(G) \cong G/Z(G)   \le_c N_G/ C_G \le_o Aut(G). \]
   Taking the quotient by $\Inn(G)$ on the right, we get a t.d.l.c. group with its  compact open subgroup predicted by van Dantzig's theorem: 
   
  \[ N_G/GC_G \le_o \Out(G).\]
  
  The work  with Philipp Schlicht   started during  his Auckland postdoc that took place  March-June 2024. It attempts to obtain the result Theorem~\ref{th:compact}(ii) in a   model-theoretic way: the elements of $\Out(G)$ can be seen as ``homotopy" equivalence classes of self-bi-interpretations of the canonical structure for $G$. Bi-interpretations may   improve our understanding of    $\Out(G)$, towards resolving for instance   whether it can be infinite and discrete. 
At present it's  known using~\cite{Evans.Hewitt:90} that $\Out(G)$ for oligomorphic $G$ can be any given separable profinite group. We don't have an example of an oligomorphic $G$ such that  $\Out(G)$ is not compact.

  \n {\emph Acknowledgement.} This section    is related to   a  talk Andr\'e Nies gave at a 2024 \href{https://austms.org.au/event/willis-fest-2024/}{conference} at the Tocal Agricultural College,  in honour of George Willis' birthday.

     \section{The Chabauty space of a t.d.l.c.\ group via its  meet groupoid}
In the following let $G$ be a t.d.l.c.\ group, and let $\+ W= \+ W(G)$ be its meet groupoid defined in~\cite[Section 4]{Melnikov.Nies:22}, which  in this setting is an algebraic structure  with domain the set of \emph{compact} open cosets of $G$, and otherwise following~\cref{def:meetgr}. Recall that $A,B,C,D$ always denote elements of $\+ W-\{\ES\}$. We  will show that the duality between t.d.l.c.\ groups and their meet groupoids holds for  the Chabauty space $\+ S(G)$  of $G$, in the sense that a homeomorphic copy of $\+ S(G)$ can be canonically recovered from~$\+ W$.

\subsection*{The Chabauty space}  We discuss the Chabauty space of a locally compact group $G$.  See e.g., Cornulier~\cite{Cornulier:11} for further background.

Using the notation of \cite[Section 2]{Cornulier:11}, a basic open set in $\+ S(G)$ has the form 
\begin{equation} \label{eqn: BO} \Omega(K; R_1, \ldots, R_n) = \{ U \le_c G \colon U \cap K = \ES \land  \forall i\le n \,  U \cap R_i  \neq \ES\},\end{equation}
where $K \sub G$ is compact, and the $R_i \sub G$ are open. 
 It is well-known that $\+ S(G) $ is compact; if $G$ is t.d.l.c.\ then $\+ S(G)$ is also  totally disconnected. So unless  $\+ S(G)$ has  isolated points, it  will be  homeomorphic to  Cantor space.

 \begin{remark} If $U_r \in \+ S(G)$ for $r\in \NN$, and $U_r \supseteq U_{r+1}$, then $\lim_r U_r= \bigcap_r U_r$. So for $\ZZ^n$, we get a countable Boolean space of rank $n$.
 Also, let $V_r \sub V_{r+1}$ for each $r\in \NN$ and let $V$  be the closure of $\bigcup_r V_r$. We have $\lim_r V_r= V$. To see this suppose that  $V\in \Omega(K; R_1, \ldots, R_n)$; we want to show that   $V_r\in \Omega(K; R_1, \ldots, R_n)$ for sufficiently large $r$. Clearly $V_r \cap K =  \ES$.  Since $R_i \cap V$ is nonempty, and  open in the subspace topology on $V$, we have $R_i \cap \bigcup_r V_r \neq \ES$ for each $i$. 
 \end{remark}
   
  A group  $P$ is called \emph{adic} if $P$ is a finite product of $\ZZ_n$'s and a finite group.
 Given  an  abelian locally compact group $G$, a closed subgroup $H$ is isolated in $\+ S(G)$ iff  
\bi \item[(t.d.l.c. case)] $H \cong A \times P$ where $A$ is f.g.\ torsion free, $P$ is adic, and $G/H$ is a finite product of Prufer groups (equivalently, it satisfies the descending chain condition on subgroups, i.e., Artinian), OR
 
\item[(elliptic case)] $H$ is adic  and $G/H \cong T \times D$, where $T $ is a finite power of the unit circle,  and $D$ is Artinian.  \ei
See    \cite[Thm.\ 1.7]{Cornulier:11}.
 
 For example,  $\ZZ_p$ is isolated in $\+ S(\QQ_p)$.    In fact, all   nontrivial closed subgroups of $\QQ_p$ are isolated, being either $\QQ_p$ or of the form  $p^r \ZZ_p$ for $r \in \ZZ$; $\{0\}$ is the only limit point of $\+ S(\QQ_p)$.

\subsection*{Ideals in a meet groupoid} $G$ will always denote a t.d.l.c.\ group, and $\+ W= \+W(G)$ its meet groupoid. 
Recall that the compact open cosets form a basis for the topology of $G$.  For $A, B_1, \ldots, B_n \in \+ W$, the relation $A \sub \bigcup_iB_i$ is  first-order definable in    $\+ W$, because  its negation is given by the formula 
  $\exists C \sub A  \bigwedge_i [C \cap B_i= \ES]$. So $\+ W$ determines a set of ideals in the following sense.

\begin{definition} We say that a set $\+ J \sub \+ W$ is an \emph{ideal} of $\+ W$ if  $\ES \in \+ W$ and \bc $(B_1, \ldots, B_n \in \+ J \lland A \sub \bigcup_iB_i)$ $\RA$  $A \in \+ J$. \ec \end{definition} 
\n Similar to Stone duality between Boolean spaces and Boolean algebras, open subsets $R$ of $G$ naturally correspond to ideals $\+ J$  of $\+ W$ via the maps

\bc $R \mapsto  \{ A \colon \, A \sub R\}$ and $\+ J \mapsto  \bigcup \+ J$. \ec
We also  write  $S_\+ J = G - \bigcup \+ J$, which is   a closed set in $G$.

\begin{definition} Let $\+ J$ be an ideal closed under inversion $A \mapsto A^{-1}$.

\n (i)  We say that   $\+ J$ is an \emph{open-subgroup ideal} if  
\bc  $A\cdot B$ defined  $\lland A \in \+ J  \lland  B \in \+ J  \RA A \cdot B \in \+ J$. \ec

\n (ii) We say that   $\+ J$ is a \emph{closed-subgroup ideal} if 
\bc $A\cdot B$ defined $\lland A\cdot B \in \+  J$ $\RA$ $A \in \+ J  \lor B \in \+ J$. \ec
  \end{definition} 
It is immediate that  $\+ J$ is open-subgroup ideal $\LR$  $R_\+ J$ is an (open)  subgroup of $G$. 
\begin{lemma} $\+ J$ is closed-subgroup ideal $\LR$ $S=S_\+ J$ is a (closed)   subgroup of $G$. \end{lemma}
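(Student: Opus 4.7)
The plan is to exploit the duality between ideals $\+ J$ of $\+ W$ and open subsets $R_\+ J := \bigcup \+ J$, using that each element of $\+ W$ is \emph{compact} open. A preliminary observation will be crucial throughout: since each $A\in \+ W$ is compact and $R_\+ J$ is open, compactness yields
\[A\in \+ J \;\Longleftrightarrow\; A\subseteq R_\+ J \;\Longleftrightarrow\; A\cap S=\emptyset,\]
where the first equivalence uses that $\+ J$ is an ideal (closed under finite covers) and compactness of $A$ reduces an arbitrary cover by elements of $\+ J$ to a finite one. Moreover, since $\+ J$ is closed under the inversion operation on $\+ W$, the set $S$ is automatically closed under the inversion $g\mapsto g^{-1}$ in $G$.

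For the implication $(\Leftarrow)$, I would assume $S$ is a (nonempty) closed subgroup of $G$ and prove the defining condition of a closed-subgroup ideal. Suppose $A\cdot B$ is defined and $A\cdot B\in \+ J$, and assume towards contradiction that $A\notin \+ J$ and $B\notin \+ J$. By the preliminary observation, there are $g\in A\cap S$ and $h\in B\cap S$. Since $S$ is a subgroup, $gh\in S$; but $gh\in A\cdot B \subseteq R_\+ J$, a contradiction.

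For the converse $(\Rightarrow)$, I would assume $\+ J$ is a closed-subgroup ideal and prove $S$ is closed under products (together with $S=S^{-1}$ and nonemptiness, this gives that $S$ is a subgroup, because $e=g\cdot g^{-1}\in S$ for any $g\in S$). Let $g,h\in S$ and suppose for contradiction that $gh\in A$ for some $A\in \+ J$. By van~Dantzig's theorem the compact open subgroups of $G$ form a neighbourhood basis of the identity, so by joint continuity of multiplication one can find a compact open subgroup $U$ with $(gU)\cdot(Uh)=gUh\subseteq A$. Now $gU$ is a left coset, and $Uh$ a right coset, of $U$, so $gU\cdot Uh$ is defined in $\+ W$; and $gUh\subseteq A\in \+ J$ forces $gUh\in \+ J$ by the preliminary observation (compactness of $gUh$). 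The closed-subgroup ideal property then yields $gU\in \+ J$ or $Uh\in \+ J$. In either case one of $g,h$ lies in $R_\+ J$, contradicting $g,h\in S$.

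The routine directions (closure under inversion, the forward implication) are straightforward; the main technical point is the backward direction, where van~Dantzig's theorem together with continuity of multiplication is used to manufacture a single compact open subgroup $U$ producing cosets $gU,Uh\in \+ W$ whose groupoid product witnesses membership of a neighbourhood of $gh$ in $\+ J$. This is precisely the translation from the pointwise statement ``$gh\in A$'' into the language of the meet groupoid in which the closed-subgroup ideal condition is expressed.
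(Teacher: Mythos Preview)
Your proof is correct and follows essentially the same route as the paper's. The only cosmetic difference is in the $(\Rightarrow)$ direction: the paper first picks arbitrary compact open cosets $A\ni g$, $B\ni h$ with $AB\subseteq C$, then intersects their underlying subgroups to obtain a common $L$ so that $(gL)\cdot(Lh)$ is defined in $\+ W$, whereas you go straight to a single compact open subgroup $U$ with $gUh\subseteq A$ via van~Dantzig and continuity; these amount to the same manoeuvre.
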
  \begin{proof} \lapf Immediate. 

\rapf Let $g,h \in G$. If $g^{-1} \not \in S$ then there is $A\in \+ J$ such that $g^{-1} \in A$. Since $\+ J$ is closed under inversion, this implies $g \not \in S$.  

If $g, h \in S $ but $gh \not \in S$ then pick $C \in \+ J$ such that $gh \in C$. By continuity there are $A, B\in \+ W$ such that $g\in A, h \in B$ and $AB \sub C$ (the   product of subsets in $G$). Let $A$ be left coset of $U$ and $B$ be right coset of $V$. Let $L = U \cap V$. Replacing $A$ by $\hat A = gL$ and $B$ by  $\hat B = Lh$, we have $\hat A \cdot \hat  B \sub C$ and hence  $\hat A \cdot \hat  B \in \+ J$. But $\hat A \cap S \neq \ES \neq \hat B \cap S$, so neither $\hat A$ nor $\hat B$ are in $\+ J$, contrary to the assumption that $\+ J$ is a closed-subgroup ideal.
  \end{proof}

Note that the set $\+ {CSI}(\+ W)$ of closed-subgroup   ideals   is closed in the product topology on $\+ P(W)$.

\begin{prop} The map $\Gamma \colon \+ {CSI}(\+ W) \to \+ S(G)$, given by \bc $\+ J \mapsto S_\+ J = G - \bigcup \+ J$,  \ec is a continuous bijection of compact spaces, and hence a homeomorphism. \end{prop}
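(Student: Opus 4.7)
The plan is to establish three ingredients — compactness (Hausdorff) of both spaces, bijectivity of $\Gamma$, and continuity of $\Gamma$ — and then invoke the classical fact that a continuous bijection between compact Hausdorff spaces is automatically a homeomorphism.

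For the topological preliminaries, the Chabauty space $\+S(G)$ is compact Hausdorff by a standard result. For the domain, I would use the remark made just before the proposition: $\+{CSI}(\+W)$ is closed in the product space $\+P(\+W) = \{0,1\}^{\+W}$. Since the latter is compact Hausdorff by Tychonoff, so is $\+{CSI}(\+W)$.

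For bijectivity I would first set up the general correspondence between open subsets $R$ of $G$ and ideals $\+J$ of $\+W$ given by $R \mapsto \{A \in \+W : A \sub R\}$ and $\+J \mapsto \bigcup \+J$. That this is a bijection uses that the compact open cosets form a basis of $G$ (so every open set is a union of elements of $\+W$) together with the closure of ideals under subset-covering (so every compact open coset contained in $\bigcup \+J$ lies in $\+J$). Combined with the preceding lemma — which says that closed-subgroup ideals correspond to closed subgroups via $\+J \mapsto S_{\+J}$ — this yields injectivity of $\Gamma$. For surjectivity, given $S \in \+S(G)$, I set $\+J := \{A \in \+W : A \cap S = \emptyset\}$; this is clearly an ideal closed under inversion (since $S = S^{-1}$), and the lemma turns the subgroup property of $S$ into the closed-subgroup-ideal property for $\+J$, with $S_{\+J} = S$.

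For continuity I would verify that $\Gamma$-preimages of sub-basic open sets of $\+S(G)$ are open in the product topology inherited by $\+{CSI}(\+W)$. Every basic open $\Omega(K; R_1, \ldots, R_n)$ is the intersection of sets of the form $\Omega(K; \emptyset)$ and $\Omega(\emptyset; R_i)$, so it suffices to treat those two cases separately. For $\Omega(K; \emptyset)$: we have $S_{\+J} \cap K = \emptyset$ iff $K \sub \bigcup \+J$, iff (by compactness of $K$) there exist $A_1, \ldots, A_m \in \+J$ with $K \sub A_1 \cup \cdots \cup A_m$; thus the preimage is the union, over all finite covers $(A_1, \ldots, A_m) \sub \+W$ of $K$, of the basic product-open sets $\{\+J : A_1, \ldots, A_m \in \+J\}$. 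For $\Omega(\emptyset; R)$: $S_{\+J} \cap R \neq \emptyset$ iff $R \not\sub \bigcup \+J$, which by the basis property and the ideal closure property is equivalent to the existence of some $B \in \+W$ with $B \sub R$ and $B \notin \+J$; hence the preimage is the union over such $B$ of the basic open sets $\{\+J : B \notin \+J\}$.

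The main obstacle I expect is the second half of the continuity step, namely converting the non-containment condition $R \not\sub \bigcup \+J$ into a first-order condition on ideal membership when $R$ is an arbitrary open set (not compact). The fix is to exploit that compact open cosets form a basis together with the subset-covering closure of ideals, which together ensure that $B \sub \bigcup \+J$ already forces $B \in \+J$ whenever $B \in \+W$. Once this is in hand, everything else is routine and the homeomorphism conclusion follows from compact–Hausdorff formal nonsense.
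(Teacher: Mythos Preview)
Your proposal is correct and follows essentially the same approach as the paper: both use compactness of $K$ to find a finite cover by elements of $\+W$, and both use the basis property together with the ideal closure condition to handle the $R_i$'s. The paper's version is slightly terser---it works pointwise, first shrinking each $R_i$ to a single compact open coset (so that $R_i\notin\+J$ directly witnesses $S_{\+J}\cap R_i\neq\emptyset$) and then exhibits one basic open neighbourhood of $\+J$ in $\+{CSI}(\+W)$---whereas you compute preimages of sub-basic sets globally; but the underlying ideas are identical, and your treatment of bijectivity is in fact more explicit than the paper's.
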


\begin{proof} 
Fix a closed-subgroup ideal $\+ J$. To show that  $\Gamma$ is continuous at   $\+ J$, suppose that $S=\Gamma(\+ J) \in \Omega(K; R_1, \ldots, R_n) $,   a basic open set as defined in (\ref{eqn: BO}).  Since the compact open cosets form a basis of the topology of $G$, we   may assume that  each $R_i$ is a compact open coset.   Since $K \sub G-S$ which is open, there are compact open cosets $B_1, \ldots, B_m$ such that $K \sub \bigcup_k B_k \sub G-S$.   Let $\mathfrak L$ be  the basic open     set of           $\+ {CSI}(\+ W)    $ consisting of the subgroup ideals $\+ A$ such that $B_k \in \+ A$ for each $k\le m$, and $R_i \not \in \+ A$ for each $i \le n$. Clearly $\+ J \in \+ A$,  and                        $\+ H \in \mathfrak L$ implies                 $\Gamma(H) \in  \Omega(K; R_1, \ldots, R_n) $.
\end{proof}

Convergence on $\+ {CSI}(\+ W)$ is given by pointwise convergence: \bc $\+ J_r \to_r \+ J$ iff $\lim_r \+ J_r(A) = \+ J(A)$ for each $A \in \+ W$. \ec Therefore, denoting by $[\phi]$ the truth value of a statement $\phi$,   we have a convergence criterion in $\+ S(G)$:
\begin{cor} $\lim_r U_r = U \LR \forall A  \in \+ W \,  [U \cap A  = \ES] = \lim_r [U_r \cap A = \ES]  $.  \end{cor}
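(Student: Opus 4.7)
The plan is to reduce the statement to the observation that convergence in $\+{CSI}(\+W)$, as a closed subspace of $\+P(\+W)$ equipped with the product topology, is by definition pointwise convergence of characteristic functions, and then transport this across the homeomorphism $\Gamma$ of the preceding proposition.

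First I would identify, for each $H \in \+S(G)$, the closed-subgroup ideal $\+J_H = \Gamma^{-1}(H)$. Unwinding the definition $\Gamma(\+J) = G - \bigcup\+J$, one has
\[ \+J_H = \{ A \in \+W \colon A \subseteq G - H\}. \]
Since the elements of $\+W$ are (in particular) subsets of $G$ and $H$ is closed, the condition $A \subseteq G - H$ is equivalent to $A \cap H = \emptyset$. Hence
\[ A \in \+J_H \iff A \cap H = \emptyset. \]

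Next, since $\Gamma \colon \+{CSI}(\+W) \to \+S(G)$ is a homeomorphism, we have $U_r \to U$ in $\+S(G)$ iff $\+J_{U_r} \to \+J_U$ in $\+{CSI}(\+W)$. The topology on $\+{CSI}(\+W)$ is inherited from the product topology on $\+P(\+W) = 2^{\+W}$, under which a net (here sequence) converges iff its characteristic functions converge pointwise. Therefore $\+J_{U_r} \to \+J_U$ is equivalent to: for every $A \in \+W$, eventually $[A \in \+J_{U_r}] = [A \in \+J_U]$, i.e.\ $\lim_r [A \in \+J_{U_r}] = [A \in \+J_U]$.

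Substituting the identification from the first step converts this into
\[ \forall A \in \+W \quad [U \cap A = \emptyset] = \lim_r [U_r \cap A = \emptyset], \]
which is exactly the stated criterion. There is no real obstacle here: the work is entirely in the homeomorphism $\Gamma$ (established in the previous proposition) and in recognising that the product topology on $2^{\+W}$ is pointwise convergence; the corollary is then a direct translation.
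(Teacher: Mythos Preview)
Your proof is correct and follows exactly the same approach as the paper: the paper simply observes, immediately before stating the corollary, that convergence on $\+{CSI}(\+W)$ is pointwise convergence (writing $\+J_r \to \+J$ iff $\lim_r \+J_r(A) = \+J(A)$ for each $A \in \+W$), and then the corollary is just the translation through the homeomorphism $\Gamma$. Your unpacking of $\Gamma^{-1}(H)$ as $\{A \in \+W : A \cap H = \emptyset\}$ is the only step the paper leaves implicit.
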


\subsection*{Computable setting}
Suppose that $G$ is given as a computable Baire presentation based on a c.l.c.\  tree $T$ as in~\cite[Def.~2.4]{Melnikov.Nies:22}. As discussed there,  after perhaps  modifying the presentation,  we may assume that $k=1$ in the definition, namely, only the root of $T$ can be infinitely branching. Let now $\+ W$ denote the corresponding Haar computable copy of  $\+ W(G)$ in the sense of  Melnikov and Nies~\cite[Def.~7.7]{Melnikov.Nies:22}. 

\begin{definition} (i) An open subset $R$ of $[T]$ is called computable if $\{\sss \in T \colon \, [\sss] \sub R\}$ is computable. 

\n (ii) A closed subset $S$ of $[T]$ is called computable if $[T]-S$ is computable. Equivalently,   the  subtree $\{\sss \in T \colon \, [\sss] \cap H \neq \ES\}$ corresponding to $S$ is computable. \end{definition} 

\begin{fact} For an ideal $\+ J \sub \+ W$, 

\bc $\+ J $ is computable $\LR$ the open set $R_{\+ J}\sub [T]$ is computable. \ec 
\end{fact}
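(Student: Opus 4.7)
The plan is to reduce both directions to a single key observation, namely that for every compact open coset $A \in \+ W$, one has $A \in \+ J$ iff $A \sub R_{\+ J}$. One direction is immediate from $R_{\+ J} = \bigcup \+ J$. For the other, if $A \sub R_{\+ J}$, then by compactness of $A$ finitely many elements $B_1, \ldots, B_n \in \+ J$ cover $A$, and the ideal property yields $A \in \+ J$. I would establish this observation first, since the rest of the argument consists of applying it in each direction.

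For $\RA$, assume $\+ J$ is computable as a subset of $\+ W$. Under the given Haar computable presentation, each node $\sigma \in T$ of positive length computably indexes an element $A_\sigma \in \+ W$ whose underlying subset of $[T]$ equals the basic cylinder $[\sigma]$. By the key observation, $[\sigma] \sub R_{\+ J}$ iff $A_\sigma \in \+ J$, and the latter is decidable by hypothesis. Since only the root of $T$ can be infinitely branching, handling it requires at most a finite case analysis, after which the set $\{\sigma\in T : [\sigma] \sub R_{\+ J}\}$ is decidable.

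For $\LA$, assume $R_{\+ J}$ is computable, so the set $\{\sigma\in T : [\sigma] \sub R_{\+ J}\}$ is decidable. Given a code for $A \in \+ W$, I would first compute, uniformly in that code, a finite antichain $\sigma_1, \ldots, \sigma_k \in T$ with $A = \bigsqcup_i [\sigma_i]$ as subsets of $[T]$. Then $A \sub R_{\+ J}$ iff $[\sigma_i] \sub R_{\+ J}$ for every $i$, which is decidable, and by the key observation this determines whether $A \in \+ J$.

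The one thing to check with care is the effective translation between an element $A \in \+ W$, as named under the Haar computable enumeration, and its decomposition into a finite union of basic cylinders $[\sigma_i]$ at a common level of $T$. This is precisely the computable bridge between the meet groupoid and the Baire tree that is built into the definitions in~\cite{Melnikov.Nies:22}, and I would invoke it rather than redevelop it. Once that translation is in hand, both directions collapse to the short compactness-plus-ideal step above, so I expect this bookkeeping to be the only nontrivial part of the write-up.
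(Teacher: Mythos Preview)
Your approach matches the paper's, and your key observation (that $A \in \+ J$ iff $A \sub R_{\+ J}$, via compactness plus the ideal axiom) is exactly what the paper uses implicitly. Your $\LA$ direction also coincides with the paper's: in the Haar computable presentation each $A \in \+ W$ is given as $\+ K_u = \bigcup_{\eta \in u}[\eta]_T$ for a finite set $u$ of strings, so one decides $A \in \+ J$ by checking $[\eta] \sub R_{\+ J}$ for each $\eta \in u$.

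There is, however, a genuine gap in your $\RA$ direction. You assert that for each $\sigma \in T$ of positive length the cylinder $[\sigma]$ \emph{is} some element $A_\sigma \in \+ W$. But $\+ W$ consists of compact open \emph{cosets}, and a basic cylinder in the tree presentation is compact open but need not be a coset. The paper handles this by invoking \cite[Lemma~2.6]{Melnikov.Nies:22} to compute, from $\sigma$, finitely many $A_1, \ldots, A_n \in \+ W$ with $[\sigma] = \bigcup_i A_i$; then $[\sigma] \sub R_{\+ J}$ iff $A_i \in \+ J$ for every $i$, the nontrivial implication being precisely your key observation applied to each $A_i$. So the effective bridge you mention at the end must run in \emph{both} directions---cosets as finite unions of cylinders, and cylinders as finite unions of cosets---whereas you only invoke the first. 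The repair is immediate once you see this, but as written your $\RA$ argument rests on a false premise.
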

\begin{proof} Suppose $T$ is clc via $k = 1$. 

\rapf Given $\sss \in T$, we may assume that $\sssl > 0$ (nonuniformly). So   $[\sss]_T$ is compact. Hence by~\cite[Lemma~2.6]{Melnikov.Nies:22} one can compute $A_1, \ldots, A_n\in \+ W $ such that  $\bigcup_i A_i = [\sss]$. Then $[\sss] \sub R$ iff $A_i \in \+ J$ for each   $i$.

\lapf By the  definition of  $\+ W$   from the computable Baire presentation based on the tree $T$, each $A  \in \+ W$ is given in the form $\+ K_u= \bigcup_{\eta\in u} [\eta]_T$ (as in~\cite[Def.~2.5]{Melnikov.Nies:22}) where $u$ encodes a finite set of nonempty strings on $T$. Then $A \in \+ J$ iff $[\eta]\sub R$ for each $\eta$ in $u$. 
\end{proof}

For the following cf.\ \cite[Fact~9.1]{Melnikov.Nies:22} which deals with computable closed subgroups. 

\begin{cor} In the context of the above, an open-subgroup ideal [closed-subgroup ideal]   $\+ J$ is computable    iff   $R_\+ J$ [resp., $S_\+ J$] is computable. \end{cor}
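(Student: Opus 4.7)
The plan is to reduce the corollary directly to the preceding Fact via the definitions of computability for open and closed subsets of $[T]$. Since the Fact already handles the case of arbitrary ideals (equating computability of $\+J$ with computability of the open set $R_\+J$), the only additional work is to translate between the open-set side and the closed-set side in the closed-subgroup case.

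First, for the open-subgroup ideal case, I would simply observe that $\+J$ being an open-subgroup ideal means $R_\+J$ is an open subgroup, and apply the Fact verbatim: $\+J$ is computable iff $R_\+J$ is computable. No further argument is needed.

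For the closed-subgroup ideal case, I would use that $S_\+J = [T] \setminus R_\+J$ by definition, together with the definition that a closed set $S \subseteq [T]$ is computable iff $[T] \setminus S$ is computable as an open set. This gives $S_\+J$ computable $\Leftrightarrow$ $R_\+J$ computable, which combined with the Fact yields $\+J$ computable $\Leftrightarrow$ $S_\+J$ computable.

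There is no genuine obstacle here; the corollary is essentially a rewriting of the Fact once one unpacks the definition of a computable closed set as the complement of a computable open set. The only minor care needed is to note that the ``open-subgroup'' and ``closed-subgroup'' qualifiers play no role in the equivalence itself — they only guarantee that $R_\+J$ (resp.\ $S_\+J$) is a subgroup, which is what makes the statement interesting in the intended application. The proof is therefore short: cite the Fact, then note that complementation in $[T]$ preserves computability by definition.
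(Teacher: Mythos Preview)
Your proposal is correct and matches the paper's approach: the corollary is stated without proof in the paper, as it follows immediately from the preceding Fact together with the definition that a closed set $S \subseteq [T]$ is computable iff its complement $[T] \setminus S$ is computable. Your observation that the ``open-subgroup'' and ``closed-subgroup'' qualifiers play no role in the computability equivalence itself is also accurate.
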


%
%
 
  \part{Metric spaces and computability theory}

 \section{Mayordomo and Nies:  \\ Fractal dimensions and profinite groups} 
By $X^*$ we denote the set of tuples over a set $X$, seen as a rooted tree where the successor relation is given by appending one element. Let    $T \sub \NN^* $ be a finitely branching  subtree (i.e., set of strings closed under initial segments)   such that each node has at least two successors. We view the path space $[T]$ as an ultrametric space. We   will use the point-to-set principle in metric spaces of Lutz et al.\ \cite{Mayordomo.etal:23} to show   that    lower box dimension and Hausdorff    dimension are equal for certain  ``regular" subsets of $[T]$, namely path spaces $[S]$ where $S $ is a subtree of $T$ such that  the number of successors is equal across each  level. 
 
We apply this to profinite groups by giving a new proof of Barnea and Shalev \cite[Thm.\ 2.4]{Barnea.Shalev:97}. 
Let the group $G = \varprojlim \sN n \, (C_n, p_n)$ be the inverse limit of an inverse system of finite groups, where $p_n \colon C_{n+1} \to C_n$ are onto homomorphisms.   $G$ is  a compact group with a compatible  ultrametric, where the distance between distinct elements $f, g$ of $G$ is $1/|C_n|$ for the least $n$ such that the projections of $f$ and $g$  into $C_n$ are different. 
The lower box dimension of a closed subgroup $U$ of $G$  is easily seen to be   
 $\liminf_i (\log u_i  /\log|C_i|)$, where $u_i$ is the size of the projection of $U$ into $C_i$. 
  The  result \cite[Thm.\ 2.4]{Barnea.Shalev:97} of central importance  for their paper  says that  the Hausdorff dimension of a closed subgroup  $U$    of $G$ equals its lower box dimension.  They rely on Abercrombie~\cite[Prop.\ 2.6]{Abercrombie:94}  for     the nontrivial part that the  box dimension is no larger than the Hausdorff dimension. Abercrombie proved this via means specific to profinite groups, including Haar measure. We show here that it is purely geometric fact that doesn't need the group structure, but is merely exploiting the regularity of a description of the subgroup  as a subtree of the tree given by the inverse system.   
 
 \subsection{Preliminaries} 
\n We begin by  describing constructive dimension  in computable metric spaces, and the point-to-set principle for both Hausdorff and packing dimension  in that context. Here we mostly follow Lutz et al.\ \cite{Mayordomo.etal:23}. We   restrict ourselves   to  the standard gauge family $\theta_s(\aaa) = \aaa^s$, which has a precision family in their sense by   \cite[Observation 2.1]{Mayordomo.etal:23}.

 Suppose we are given a computable metric space $M$.  Thus,  there is dense set $D\sub M$, called the set of special points, and   there is  an onto  function $r \colon \{0,1\}^* \to D$ such that the distance  between special points $r(x), r(y)$ is uniformly computable in $x,y$ as a real number. (One says that   $D, r$ determine a \emph{computable structure} on $M$.)  For $w\in D$ we write $C^{M,r}(w)$  for $\min \{ C(u) \colon r(u) = w\}$.  
 
 \begin{definition}[\cite{Mayordomo.etal:23}, (3.1) and (3.2)] \label{df:ccdim} For  $x\in M$ and any $\aaa > 0$,  
 one defines the \emph{Kolmogorov complexity of a point $x$ at precision   $\alpha$} by  \begin{equation} \label{eqn: define Calpha}  C_\alpha(x) =
\min \{C^{M,r}(w) \colon  w \in D \lland d(x,w)\le \alpha\}.\end{equation}
 The \emph{algorithmic  dimension} of $x \in M$ is 
 \begin{equation} \label{eqn:constr dim}  \textit{dim}(x) :  = \liminf_{\alpha \to 0^+} C_\alpha(x)/ \log (1/\alpha).\end{equation}
  The \emph{strong algorithmic  dimension} of $x \in M$ is 
 \begin{equation} \label{eqn:constr dim}  \textit{Dim}(x) :  = \limsup_{\alpha \to 0^+} C_\alpha(x)/ \log (1/\alpha).\end{equation} \end{definition} 
 Let $\dim_P([X])$ denote the packing dimension of $X$. It is defined from ``inside" via disjoint sets of balls with centre in $X$. See e.g.\ \cite[Section 2]{Mayordomo:08} for the detailed definition.
The \emph{point-to-set principles} (PTSP)   \cite[Thms 4.1,4.2]{Mayordomo.etal:23}  determine the Hausdorff and packing dimension of a \emph{set} $X \sub M$ in terms of the relativised constructive dimensions of the\emph{ points} in it:
 \begin{eqnarray} \label{eqn:PTSP}  \dim_H (X) & = &  \min_A \sup_{x \in X} \textit{dim}^A(x),\\ \dim_P (X) & = &  \min_A \sup_{x \in X} \textit{Dim}^A(x) \label{eqn:PTSP2}  \end{eqnarray}

\begin{definition}  Fix some computable injection   $\pi \colon   \NN^* \to \{0,1\}^*$, say \bc $(n_0, \ldots, n_{k-1}) \mapsto 0^{n_0 } 1 \ldots 0^{n_{k-1}} 1$.   \ec    The plain descriptive (Kolmogorov) complexity $C(y) $ of $ y \in \NN^*$    is defined to be $C (\pi(y))$;  similarly we define $K(y)$, $C(y\mid n)$, etc. The choice of $\pi$ will only affect these values up to a   constant.
It will be  clear from context whether $C(y)$ is taken to be evaluated for $y \in \NN^*$ or for $y \in \{0,1\}^*$. There are  two interpretations of $C(y)$ for    a binary string $y$; however they agree up to  a constant. \end{definition}

\begin{convention} All logarithms will be taken with respect to base $2$. \end{convention}
 \subsection{The space of paths on  a rooted tree.}  Recall that  $T \sub \NN^*$ is a finitely branching  tree such that each node has at least two successors. By $T_n$ we denote the set of strings in $T$ of length $n$. The path spaces $[T]$ of  such  trees are precisely the compact subsets of Baire space. We view them as metric spaces via  the ultrametric 
 \[ d(f,g)= \max \{ |T_n|^{-1} \colon \, f(n) \neq g(n)\}\]
 where $\max \ES = 0$ by convention. 

  Suppose $[T]$ is computable. We     describe an appropriate computable structure on the metric space $[T]$.  Let $r_0 \colon \{ 0,1\}^* \to T$ be the   computable bijection that is an isomorphism between  the length lexicographical orderings on $\{ 0,1\}^*$ and $T$. Given a binary string~$u$, let  $r(u)$ be  a path on $[T]$ extending the string  $ r_0(u)$; we can choose it  uniformly computable in $u$ since $T$ is computable.
  
   Recall   the definition of $C_\aaa(x)$ in   (\ref{eqn: define Calpha}). We will relate it to the initial segment  complexity of paths.
 \begin{prop} \label{cl: T1} Suppose that     $T\sub \NN^*$ is a computable tree such that each node has at least two successors. Let $r \colon \{ 0,1\}^*\to [T]$ define a computable structure as above. Let $f \in [T]$.  For each~$\aaa$ such that  $|T_{n-1}|^{-1} >\alpha \ge |T_n|^{-1}$, we have  \bc $ C(f\uhr n) - 2 \log n \lep  C(f\uhr n \mid n) \lep C_{\alpha}(f)\lep   C(f\uhr  n)$. \ec \end{prop}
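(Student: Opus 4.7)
The plan is to verify the three inequalities in turn, treating the middle inequality (which encodes the geometric content) with the most care and handling the two outer inequalities by standard Kolmogorov complexity manipulations.

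For the leftmost inequality $C(f\uhr n) - 2\log n \lep C(f\uhr n \mid n)$, I will invoke the standard relation $C(x) \leq C(x \mid n) + C(n) + O(1)$, together with the elementary bound $C(n) \leq 2\log n + O(1)$ (using any standard self-delimiting encoding of $n$). Rearranging gives the claimed inequality with only an additive constant lost.

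For the rightmost inequality $C_\alpha(f) \lep C(f\uhr n)$, I will produce a concrete special point close to $f$. Given a shortest (plain) description of the string $\sigma := f \uhr n \in T_n$, set $u := r_0^{-1}(\sigma) \in \{0,1\}^*$, which is computable from $\sigma$ since $r_0$ is a computable bijection between $\{0,1\}^*$ and $T$. Then $r(u)$ is a special point that extends $\sigma$, so its disagreement with $f$ (if any) occurs at some position $k \ge n$, giving $d(f, r(u)) \le |T_n|^{-1} \le \alpha$. Hence $C_\alpha(f) \le C^{M,r}(r(u)) \le C(u) \le C(\sigma) + O(1) = C(f\uhr n) + O(1)$.

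The main obstacle, and the content of the middle inequality $C(f \uhr n \mid n) \lep C_\alpha(f)$, is a small ultrametric observation: the only possible nonzero distances in $[T]$ have the form $|T_k|^{-1}$. Given any special point $w \in D$ with $d(f,w) \le \alpha$, if $d(f,w) = |T_k|^{-1}$ with $k$ the least position of disagreement, then $|T_k|^{-1} \le \alpha < |T_{n-1}|^{-1}$ forces $|T_k| > |T_{n-1}|$, hence $k \ge n$. Thus $w \uhr n = f \uhr n$. Now, given $n$ and any binary string $u$ with $r(u) = w$, the algorithm that computes $r(u)$ and restricts to length $n$ outputs $f \uhr n$. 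Taking the minimum over $u$ witnessing $C_\alpha(f)$ yields $C(f \uhr n \mid n) \le C_\alpha(f) + O(1)$.

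The only delicate point is the ultrametric step in the middle inequality, which hinges on the discrete spectrum of possible distances in $[T]$ together with the hypothesis $|T_{n-1}|^{-1} > \alpha \ge |T_n|^{-1}$; once this is established, all three bounds reduce to bookkeeping with plain Kolmogorov complexity.
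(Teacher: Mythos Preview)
Your proof is correct and follows essentially the same approach as the paper: the leftmost inequality is the standard $C(x)\le C(x\mid n)+C(n)+O(1)$ bound, the rightmost uses $r_0^{-1}(f\uhr n)$ to exhibit a special point within distance $|T_n|^{-1}\le\alpha$ of $f$, and the middle inequality is exactly the observation that any special point $w$ with $d(f,w)\le\alpha<|T_{n-1}|^{-1}$ must agree with $f$ on the first $n$ coordinates, so $f\uhr n$ is computable from $n$ together with any $u$ with $r(u)=w$. The paper's proof is more terse but identical in substance; your explicit spelling-out of the ultrametric step (that nonzero distances lie in the discrete set $\{|T_k|^{-1}\}$ and $|T_k|$ is strictly increasing) is a welcome clarification.
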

\begin{proof}  The leftmost inequality is standard. For the middle inequality,  
   suppose    $w \in D$ and  $d (w, f) \le \aaa$. Given $u \in  \{ 0,1\}^* $ such that $r(u)= w$ and $n\in \NN$, we can compute $w \uhr n$,  which  has to equal $f\uhr n$ since $|T_{n-1}|^{-1} >\alpha$.  Thus $ C(f\uhr n \mid n) \lep C(w)$.

 For the rightmost inequality,   given a string $y = f\uhr n \in T$ we   compute the  string $u\in \{0,1\}^*$ such that $r_0(u)= y$. Since $d(f, r(u)) \le \alpha$, this shows $C_\aaa(f) \lep C(u)=^+ C(y)$. \end{proof}

 \subsection{Constructive dimension on path space $[T]$} We express the values  $\dim(f)$ and $\textit{Dim}(f)$ (Def.\ \ref{df:ccdim}) of a path $f \in [T]$ in terms of its initial segment complexity. This is of course well known at least for the case of a binary tree $T$, where it goes back to \cite{Mayordomo:02}.
\begin{prop}  \label{cl: T2} Suppose each node on $T$ has at least two successors. For any  $f\in [T]$   we have  \[\textit{dim}(f)= \liminf_n \frac{C(f \uhr n) }{ \log |T_n|} = \liminf_n \frac{K(f \uhr n) }{ \log |T_n|}\] 
 \[\textit{Dim}(f)= \limsup_n \frac{C(f \uhr n) }{ \log |T_n|} = \limsup_n \frac{K(f \uhr n) }{ \log |T_n|}\]

\end{prop}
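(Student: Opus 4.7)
The plan is to use Proposition~\ref{cl: T1} as the bridge between $C_\aaa(f)$ and initial segment complexities. That result says: for any small $\aaa$, letting $n$ be the unique integer with $|T_{n-1}|^{-1} > \aaa \ge |T_n|^{-1}$, one has $C(f\uhr n) - 2\log n \lep C_\aaa(f) \lep C(f\uhr n)$. The standing assumption that every node has at least two successors forces $|T_n| \ge 2^n$, hence $\log n = o(\log|T_n|)$; all the $O(\log n)$ errors therefore disappear after dividing by either $\log|T_n|$ or $\log(1/\aaa)$. The same observation handles the $C \leftrightarrow K$ switch at the end, via the universal estimate $|K(x) - C(x)| = O(\log|x|)$ applied at $x = f\uhr n$.

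For the $\textit{dim}$ equality, one direction is immediate from specializing to the subsequence $\aaa_n := |T_n|^{-1}$: here $\log(1/\aaa_n) = \log|T_n|$ and the sandwich collapses to $C_{\aaa_n}(f)/\log(1/\aaa_n) = C(f\uhr n)/\log|T_n| + o(1)$, yielding $\textit{dim}(f) \le \liminf_n C(f\uhr n)/\log|T_n|$. For the reverse, any $\aaa$ in the interval $I_n := [|T_n|^{-1}, |T_{n-1}|^{-1})$ satisfies $\log(1/\aaa) \le \log|T_n|$; combining this with the lower half of the sandwich gives the pointwise bound $C_\aaa(f)/\log(1/\aaa) \ge C(f\uhr n)/\log|T_n| - o(1)$, and taking $\liminf$ as $\aaa \to 0^+$ delivers $\textit{dim}(f) \ge \liminf_n C(f\uhr n)/\log|T_n|$.

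The $\textit{Dim}$ equality proceeds analogously on one side: the same subsequence $\aaa_n = |T_n|^{-1}$ furnishes $\textit{Dim}(f) \ge \limsup_n C(f\uhr n)/\log|T_n|$. The main obstacle is the matching upper bound, since for $\aaa \in I_n$ the denominator $\log(1/\aaa)$ is only bounded below by $\log|T_{n-1}|$, not $\log|T_n|$; the naive pointwise bound would only give $\textit{Dim}(f) \le \limsup_n C(f\uhr n)/\log|T_{n-1}|$. To close the gap I would combine two moves: first, the elementary inequality $C(f\uhr n) \le C(f\uhr{n-1}) + \log|T_n| + O(\log n)$ obtained by specifying which of the at most $|T_n|$ successors in $T$ extends $f\uhr{n-1}$; and second, a reindexing of the $\limsup$ that, under mild regularity of branching (automatic in the profinite applications of interest, where $\log|T_n|/\log|T_{n-1}| \to 1$), reconciles the denominators $\log|T_{n-1}|$ and $\log|T_n|$. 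Finally, the two identifications $\liminf C = \liminf K$ and $\limsup C = \limsup K$ follow from the $|K-C| = O(\log n)$ remark already recorded in the first paragraph.
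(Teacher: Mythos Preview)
Your treatment of the $\textit{dim}$ identity is correct and follows the paper's line: both use Proposition~\ref{cl: T1} together with $\log n = o(\log|T_n|)$, and you have simply made explicit the two inequalities the paper compresses into a single displayed equation. The $C\leftrightarrow K$ switch is handled identically.

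For the $\textit{Dim}$ identity you are right to worry, and in fact more careful than the paper. The paper merely asserts that ``the argument for the second assertion is almost identical,'' but the $\limsup$ case is genuinely asymmetric: on $[|T_n|^{-1},|T_{n-1}|^{-1})$ the numerator $C_\alpha(f)$ stays near $C(f\uhr n)$ while the denominator $\log(1/\alpha)$ can drop toward $\log|T_{n-1}|$, exactly as you diagnose. Without extra regularity the stated identity can fail. Take $T$ uniformly branching with branching factor $2^{2^i}$ at level $i$ (so $\log|T_n|=2^{n+1}-2$), and let the $i$th choice along $f$ encode $2^{i-1}$ fresh ML-random bits padded by $2^{i-1}$ zeros; then $C(f\uhr n)\sim 2^n$, so $C(f\uhr n)/\log|T_n|\to 1/2$, yet taking $\alpha$ just below $|T_{n-1}|^{-1}$ gives $C_\alpha(f)/\log(1/\alpha)\to 1$, whence $\textit{Dim}(f)=1\neq 1/2$. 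So the hypothesis $\log|T_n|/\log|T_{n-1}|\to 1$ that you isolate is exactly what is needed (and it does hold in the intended profinite applications once one also controls the filtration). Under that hypothesis your ``second move'' alone already closes the gap, since then $C(f\uhr n)/\log|T_{n-1}|$ and $C(f\uhr n)/\log|T_n|$ share the same $\limsup$; your ``first move'' is not needed, and with the crude bound $\log|T_n|$ for the number of successors it is in any case too weak to help by itself.
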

\begin{proof} We only verify the first assertion; the argument for the second assertion is almost identical. By hypothesis    we have $\log|T_n| \ge n$, and so $\lim_n \log n/\log |T_n|=0$.  For the left  equality,  using  Prop.\ \ref{cl: T1}  we have   
\bc $\dim(f)=  \liminf_{\alpha \to 0^+} C_\alpha(f)/ \log (1/\alpha)= \liminf_{n \to \infty} C(f\uhr n)/ \log |T_n|$.  \ec
For the right  equality, note that for each $n$ and each $y \in T_n$, we have     $C(y) \lep K(y) \lep C(y) + K(n)\lep C(y) + 2 \log n$. 
\end{proof}

\subsection{ML-randomness and Levin-Schnorr Theorem on $[S]$.}
For a tree $S$ without leaves,  the path space $[S]$ carries the  uniform   measure $\mu_S$ given by     $\mu_S[S]=1$;  if $\mu_S([\sss]\cap [S] $ is defined, then  $\mu_S([\sss\ape i]\cap [S])= \mu_S([\sss]\cap [S])/k_\sss$ where $k_\sss$ is the number of successors of $\sss$ on $S$. If $S$ and the function  $\sss \mapsto k_\sss$ are computable, then $\mu_S$ is computable.  For such $S$, one can generalise the  Levin-Schnorr Theorem (see, e.g.,  \cite[3.2.9]{Nies:book}) to $[S]$.   For $X \sub S$ let $\ROpcl {S} X =  \{ f \in S \colon \exists n f \uhr n \in X\}$ be the open subset of $[S]$ described by $X$.
 Let \[ R_{S,b} = \ROpcl {S} { \{ \sss \in S \colon K(\sss) \le -  \log \mu_S[\sss] - b\}}.\] 
 \begin{lemma} \label{lem:SL}  $(R_{S,b})\sN b$ is a universal ML-test on $[S]$.
 \end{lemma}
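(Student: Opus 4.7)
The plan is to adapt the classical proof of the Levin--Schnorr theorem from $2^\NN$ with the uniform measure to the setting of $[S]$ with the computable measure $\mu_S$. I need to verify two things: (i) each $R_{S,b}$ is $\Sigma^0_1$ in $[S]$ with $\mu_S$-measure at most $2^{-b}$, uniformly in $b$; and (ii) every $f\in[S]$ that is not Martin-L\"of random lies in $\bigcap_b R_{S,b}$.

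For (i), the set $X_b = \{\sigma \in S : K(\sigma) \le -\log\mu_S[\sigma] - b\}$ is c.e.\ uniformly in $b$ because $K$ is upper semicomputable and $\mu_S[\sigma] = \prod_{i<|\sigma|} 1/k_{\sigma\uhr i}$ is computable on $S$; hence $R_{S,b} = \ROpcl{S}{X_b}$ is $\Sigma^0_1$ uniformly. For the measure bound, pass to the antichain $Y_b$ of $\sqsubseteq$-minimal elements of $X_b$ in $S$, so that $R_{S,b}$ is the disjoint union of the cylinders $[\sigma]\cap [S]$ for $\sigma \in Y_b$. The definition of $X_b$ gives $\mu_S[\sigma] \le 2^{-K(\sigma)-b}$, so
\[
\mu_S(R_{S,b}) \;=\; \sum_{\sigma \in Y_b} \mu_S[\sigma] \;\le\; 2^{-b} \sum_{\sigma \in Y_b} 2^{-K(\sigma)} \;\le\; 2^{-b},
\]
the last inequality being Kraft's inequality applied to the prefix-free machine underlying $K$ (using that $\pi$ is an injection, so distinct tree nodes receive distinct prefix-free descriptions).

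For (ii), fix a universal Martin-L\"of test $(V_c)_{c\in\NN}$ on $([S],\mu_S)$ with $\mu_S(V_c)\le 2^{-c}$, and assume $f\in \bigcap_c V_c$. For each $b\ge 1$ enumerate an effective antichain $Y'_{2b} \subseteq S$ with $V_{2b} = \bigsqcup_{\sigma\in Y'_{2b}} ([\sigma]\cap [S])$; since $\mu_S(V_{2b}) \le 2^{-2b}$, each $\sigma \in Y'_{2b}$ satisfies $\mu_S[\sigma]\le 2^{-2b}$, so the integer $n_\sigma := \lceil -\log \mu_S[\sigma]\rceil - b$ is at least $b\ge 1$. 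Enumerate the Kraft--Chaitin request set $W := \{\langle n_\sigma, \sigma\rangle : b\ge 1,\ \sigma \in Y'_{2b}\}$; its total weight is bounded by
\[
\sum_{b\ge 1} 2^{b} \sum_{\sigma \in Y'_{2b}} \mu_S[\sigma] \;\le\; \sum_{b\ge 1} 2^{b}\cdot 2^{-2b} \;=\; 1.
\]
By the Kraft--Chaitin theorem and optimality of $K$, there is a constant $d$ with $K(\sigma) \le -\log\mu_S[\sigma] - b + d$ for every $b\ge 1$ and every $\sigma \in Y'_{2b}$. So each such $\sigma$ lies in $X_{b-d}$, and in particular for every $b$ some initial segment of $f$ lies in $X_{b-d}$, i.e.\ $f\in R_{S,b-d}$. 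Hence $f \in \bigcap_c R_{S,c}$, which is exactly the universality statement.

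The main obstacle is purely bookkeeping: carrying the $+O(1)$ losses through the Kraft--Chaitin construction so that the exponent $b$ in $R_{S,b}$ still tends to infinity, and verifying that the ceiling $\lceil -\log\mu_S[\sigma]\rceil$ used to make the request lengths computable costs at most a factor of $2$ in total weight (harmlessly absorbed into the bound $1$ above by shifting from $\sum 2^{-b}$ to $\sum 2^{-b-1}$ if needed). No new conceptual ingredient beyond the classical proof is required once $\mu_S$ has been checked to be computable and Kraft's inequality is transferred via the injection $\pi$.
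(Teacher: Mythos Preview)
Your argument is correct and follows the same route as the paper: verify the measure bound via the prefix-minimal antichain and Kraft's inequality, then obtain universality by a Kraft--Chaitin request set built from a given ML-test on $([S],\mu_S)$. The paper's own proof is terser, simply deferring the universality half to the classical argument in \cite[3.2.9]{Nies:book} together with a machine existence theorem for strings in $\NN^*$, which is precisely what you have spelled out.
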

 
 \begin{proof} Clearly the open sets $R_{S,b}$ are  $\SI 1$ uniformly in $b$. Given $b$, let  $D$ be the set of prefix minimal strings $\sss \in S$ such that $K(\sss) \le - \log \mu([\sss])-b$. Then \[ 1 \ge \sum_{\sss \in D} \tp{-K(\sss) } \ge \sum_{\sss \in D}\tp{\log \mu([\sss]) + b}.\]
 This implies that $\mu_S \ROpcl S D \le \tp {-b}$.  
 
 Universality is shown as in the proof for Cantor space $\cantor$; see \cite[3.2.9]{Nies:book}.  To follow that proof, one first needs to write down a machine existence theorem for strings  in $\NN^*$. 
 \end{proof} 
 
 While we are mainly interested in the case that  $[S]$ is uncountable,  the argument above also works  for countable $[S]$. 
 Note that in the countable case only the empty set is null, so every path is ML-random on $[S]$.
 \medskip
 

\subsection{Box dimensions of bounded closed subsets.} The lower and upper box   dimension of a bounded  set $X$ in a metric space $M$ are
\begin{eqnarray} \label{eqn:lower box} \ul \dim_B(X)&=&
\liminf_{\alpha \to 0^+} \frac{\log N_\alpha(X)}{ \log (1/\alpha)} \\ 
\ol \dim_B(X)&=&
\limsup_{\alpha \to 0^+} \frac{\log N_\alpha(X)}{ \log (1/\alpha)} \nonumber
\end{eqnarray}
where $N_\alpha(X)$ is the least cardinality  of a covering of $X$ with sets of diameter at most $\alpha$. 
 Now let  $M =[T]$ for a tree $T$ as above, and  write $t_n = |T_n|$. Each closed subset of $[T]$  is given as $[S]$ for a unique subtree $S$ of $T$.  
 If $t_{n-1}^{-1} > \alpha \ge t_n^{-1} $, then  we may assume that the sets in   a covering as above  are of the form $[\sss]$ for $\sssl =n$, because  these are the maximal  sets of diameter at most $\aaa$. So,  we have $N_\alpha(X)= |S_n|$. Also, if  $t_{n-1}^{-1} > \alpha \ge t_n^{-1} $ then  $-\log \alpha \le \log t_n$. So  

\begin{eqnarray} \label{eqn:dimB}  \ul \dim_B([S])&=& \liminf_{n \to \infty} \frac{ \log |S_n|}{\log |T_n|}\\ 
\ol \dim_B([S])&=& \limsup_{n \to \infty} \frac{ \log |S_n|}{\log |T_n|}. \nonumber \end{eqnarray}

\begin{definition} \label{df:ubr}  We say that a tree $S \sub \NN^*$ is \emph{uniformly branching} if for each $n$, each $\sss \in S_n$ has the same number of successors on $S$. \end{definition}
Thus, $\mu_S([\sss]) = |S_n|^{-1}$ for each $\sss \in S_n$. 

\begin{example} Let $T = \{0,1 \}^*$. Let $S$ be the uniformly branching subtree of $T$ such that for  each even $k$, for each $n \in [4^k,4^{k+1})$, strings at level $n$ have one successor, and for  each odd $k$, for each $n \in [4^k,4^{k+1})$, strings at level $n$ have two successors. Then $\ul \dim_B([S]) \le 1/4$ and $\ol \dim_B([S])\ge 3/4$.
\end{example}
\begin{prop} \label{cl:T3}  (i) Let $S$ be a computable subtree of $T$, which is a computable tree as above.  For each    $f \in [S]$,
\begin{eqnarray*} \textit{dim}(f) &\le &  \ul \dim _B(S), \text{ and}  \\
\textit{Dim}(f) &\le&   \ol \dim _B(S).    \end{eqnarray*}

\n (ii) Suppose $S$ is uniformly branching. Then equalities hold in the case  that $f$  is \ML\ random in $[S]$ with respect to~$\mu_S$.  \end{prop}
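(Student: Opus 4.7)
For part (i), the plan is to combine the complexity formulas for $\textit{dim}(f)$ and $\textit{Dim}(f)$ from Proposition~\ref{cl: T2} with a straightforward coding argument. Since $S$ is a computable subtree of $T$, the finite set $S_n = S \cap T_n$ can be listed effectively, uniformly in $n$. Any $f\uhr n$ for $f \in [S]$ is a member of $S_n$, so it can be described by a prefix-free code for $n$ followed by the index of $f\uhr n$ in the canonical enumeration of $S_n$. This gives
\[ K(f\uhr n) \le^+ \log |S_n| + 2\log n, \]
and thus $C(f\uhr n) \le K(f\uhr n) \le^+ \log|S_n|+ 2\log n$. Dividing by $\log|T_n|$ and passing to $\liminf$ or $\limsup$, we exploit the fact that each node of $T$ has at least two successors, so $|T_n| \ge 2^n$ and $\log n / \log |T_n| \to 0$. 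By formula (\ref{eqn:dimB}) this yields $\textit{dim}(f) \le \ul{\dim}_B([S])$ and $\textit{Dim}(f) \le \ol{\dim}_B([S])$.

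For part (ii), I would use the Levin--Schnorr theorem on $[S]$ supplied by Lemma~\ref{lem:SL}. First note that when $S$ is computable and uniformly branching, the branching function $\sigma \mapsto k_\sigma$ is computable, so $\mu_S$ is a computable measure and Lemma~\ref{lem:SL} applies. ML-randomness of $f$ in $[S]$ is equivalent (by the standard form of Levin--Schnorr, derived in the usual way from a universal test) to
\[ \exists c \, \forall n \ [K(f\uhr n) \ge -\log \mu_S[f\uhr n] - c]. \]
Since $S$ is uniformly branching, $\mu_S[\sigma] = 1/|S_n|$ for every $\sigma \in S_n$, so this lower bound simplifies to $K(f\uhr n) \ge \log |S_n| - c$. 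Combining with the standard inequality $K(x) \le C(x) + 2\log|x| + O(1)$, we obtain $C(f\uhr n) \ge \log|S_n| - 2\log n - O(1)$.

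Dividing this lower bound by $\log|T_n|$ and again using $\log n = o(\log|T_n|)$, and then taking $\liminf$ and $\limsup$, we conclude via Proposition~\ref{cl: T2} and (\ref{eqn:dimB}) that $\textit{dim}(f) \ge \ul{\dim}_B([S])$ and $\textit{Dim}(f) \ge \ol{\dim}_B([S])$. Combined with the inequalities from (i), the equalities follow.

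The potentially fiddly step is the bookkeeping of the $O(\log n)$ overhead coming from coding $n$ and from the gap between $C$ and $K$; but this is uniformly absorbed because the hypothesis that every node of $T$ has at least two successors forces $\log|T_n| \ge n$, so all logarithmic-in-$n$ error terms are negligible relative to $\log|T_n|$. No further obstacle is anticipated: once the Levin--Schnorr characterisation on $[S]$ is in hand from Lemma~\ref{lem:SL}, the argument is essentially the standard Cantor-space proof that ML-random reals are maximally complex, transferred to the uniformly branching ultrametric setting.
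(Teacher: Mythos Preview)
Your proposal is correct and essentially the same as the paper's: both bound $K(f\uhr n)$ above by $\log|S_n|+O(\log n)$ via a coding argument for (i), and below by $\log|S_n|-O(1)$ via Levin--Schnorr (Lemma~\ref{lem:SL}) for (ii), then invoke Proposition~\ref{cl: T2} and (\ref{eqn:dimB}). Your extra step in (ii) converting the $K$-lower bound to a $C$-lower bound is unnecessary (Proposition~\ref{cl: T2} already gives the $K$-version, and the paper simply stays with $K$); also be careful that ``$|x|$'' for $x=f\uhr n\in\NN^n$ is not $n$ under the encoding $\pi$, so if you insist on converting, use $K(x)\le C(x)+2\log C(x)+O(1)$ instead.
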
 
\begin{proof} We use Proposition~\ref{cl: T2} for both parts. 

\n (i)  Let $\eta_S \colon S \to \{0.1\}^*$ be    the  (computable) isomorphism between the  length-lexicographical orderings on $S$ and on $\{0,1\}^*$.  
Recall that   to define $C(y)$ and $K(y)$ for $y \in \NN^*$, we fixed a  computable injection $\pi \colon \NN^* \to \{0,1\}^*$. Also recall that $\log$ is in base $2$.
We claim that for   $y \in S_n$,  \bc $K(y)  \le^+ \log |S_{\le n} |+ 2 \log \log |S_{\le n} |$.   \ec  To verify  this, 
let $x = \eta_S(y)$.  We have     $|x| \le \lceil \log |S_{\le n}|\rceil$.  Since $x$ is binary,   $K(x) \le^+ |x| + 2 \log |x|$. Since $K(y) = K(\pi (\eta_S^{-1}(x)))\lep K(x)$ the claim follows. 

To verify  the  inequalities, let $y = f\uhr n$ in the above.  Note that \bc $\log |S_{\le n} | \le \log (n \cdot  |S_n|) \lep \log n + \log |S_n|$. \ec So $K(y) \le \log n  + \log |S_{n} |+ \log ( \log n +  \log |S_n |)$. Now use ,  that     $\log n + \log ( \log n +  \log |S_n |)  = o(\log |T_n|) $.

\n (ii) If $f$ is ML random then $f$ passes the universal test $(R_{S,b}) \sN b$ in  Lemma~\ref{lem:SL}). Thus  $\log |S_n| =^+ -\log \mu_S[f\uhr n]  \lep K(f\uhr n)$.  
The equalities now follow from   Proposition~\ref{cl: T2}.  
\end{proof}

\subsubsection*{Lower box  dimension of $[S]$ equals Hausdorff dimension  for  a   uniformly branching subtree $S$.}  We  compare Hausdorff and lower box dimension, as well as packing and upper box dimension. 
It is known that $\dim_H(X) \le \ul \dim_B(X)$ and $\dim_P(X) \le \ol \dim_B(X)$  for any bounded subset $X$ of a metric space $M$. However, in our setting where $X=[S]\sub [T]=M$, and $S$ uniformly branching,  we won't need this, as the equalities come right out of our setting. 

\begin{thm} \label{prop: dims equal} Let $S\sub T \sub \NN^*$ be trees, and suppose  that  each node on $T$ has at least two successors. Suppose  that $S$ is uniformly branching.  Then  \begin{eqnarray*} \dim_H[S]  & =&  \ul \dim_B[S], \text{  and}   \\
\dim_P[S] &= &\ol \dim_B[S].\end{eqnarray*}
\end{thm}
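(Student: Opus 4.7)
The plan is to apply the point-to-set principle (equations (\ref{eqn:PTSP}) and (\ref{eqn:PTSP2})) in the computable metric space $[T]$, deriving both inequalities of each equality from it. For the upper bounds $\dim_H[S] \le \ul\dim_B[S]$ and $\dim_P[S] \le \ol\dim_B[S]$, first fix an oracle $A$ computing the pair $(T,S)$. A straightforward relativization of Proposition \ref{cl:T3}(i), with $C$ replaced by $C^A$ and $K$ by $K^A$ throughout, gives $\textit{dim}^A(f) \le \ul\dim_B[S]$ and $\textit{Dim}^A(f) \le \ol\dim_B[S]$ for every $f \in [S]$. Taking suprema over $f$ and minima over oracles in (\ref{eqn:PTSP}) and (\ref{eqn:PTSP2}) yields the two upper bounds.

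For the reverse inequalities, the task is to show that for \emph{every} oracle $A$ there exists a path $f \in [S]$ with $\textit{dim}^A(f) \ge \ul\dim_B[S]$, and analogously for $\textit{Dim}$ and $\ol\dim_B$. Given an arbitrary oracle $A$, I would form the join $B = A \oplus T \oplus S$. Relative to $B$, the trees $T$ and $S$ are computable; uniform branching of $S$ makes the level sizes $|S_n|$ and the measure $\mu_S$ both $B$-computable, so the $B$-relativization of Lemma \ref{lem:SL} supplies a universal $\mu_S$-Martin-L\"of test on $[S]$. Choose any $f \in [S]$ that is Martin-L\"of random in $[S]$ with respect to $\mu_S$ relative to $B$ (such $f$ exist, since the complement of the universal test has full $\mu_S$-measure). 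Applying the $B$-relativization of Proposition \ref{cl:T3}(ii) to this $f$ gives $\textit{dim}^B(f) = \ul\dim_B[S]$ and $\textit{Dim}^B(f) = \ol\dim_B[S]$. Since $A \le_T B$ implies $C^A(\sigma) \ge C^B(\sigma) - O(1)$ uniformly in $\sigma$, one obtains $\textit{dim}^A(f) \ge \textit{dim}^B(f)$ and similarly for $\textit{Dim}$. Thus $\sup_{f \in [S]}\textit{dim}^A(f) \ge \ul\dim_B[S]$ and $\sup_{f \in [S]}\textit{Dim}^A(f) \ge \ol\dim_B[S]$ for every oracle $A$, and the point-to-set principle closes the two equalities.

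I do not anticipate a major obstacle; what needs slight care is checking that Proposition \ref{cl:T3} and Lemma \ref{lem:SL} genuinely relativize. In part (i) the bound $K(y) \le^+ \log|S_{\le n}| + 2\log\log|S_{\le n}|$ remains valid for $K^A$ using the $A$-computable bijection $\eta_S$, and the crucial growth estimate $\log n = o(\log|T_n|)$ depends only on the standing hypothesis that every node of $T$ has at least two successors, so is oracle-free. In part (ii) the Levin--Schnorr characterization passes to $[S]$ exactly as in Lemma \ref{lem:SL}, since $\mu_S$ is $B$-computable and the machine existence theorem for strings over $\NN^*$ relativizes. Once these minor points are in hand, the two equalities follow simultaneously, the packing case being formally identical to the Hausdorff case with $\liminf$ replaced by $\limsup$.
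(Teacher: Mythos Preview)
Your approach is essentially the paper's: relativize Proposition~\ref{cl:T3} and Lemma~\ref{lem:SL} to a sufficiently strong oracle, then invoke the point-to-set principle, using that passing from $A$ to $A$ joined with a fixed oracle can only decrease $\textit{dim}^A(f)$.

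One small technical point deserves care. Your claim that ``uniform branching of $S$ makes the level sizes $|S_n|$ \ldots\ $B$-computable'' from $B = A \oplus T \oplus S$ is not justified as stated: having $S$ (and $T$) as membership oracles lets you \emph{search} for successors of a node, but gives no bound telling you when to stop, so you cannot in general compute the branching number at a level. The paper handles this by explicitly taking an oracle $E$ that, in addition to computing $T$ and $S$, computes from $n$ the number of successors at level~$n$ of $S$; this is exactly what is needed to make $\mu_S$ and the length-lex bijection $\eta_S$ oracle-computable. If you enrich your $B$ in the same way, your argument goes through verbatim.
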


\begin{proof} 
Suppose   that $E$ is an oracle such that $S \le_T E$, $T \le_T E$, and $E$ can compute  from~$n$ the   number  of successors for $S$ at   level~$n$. Note that  the uniform measure $
\mu_S$ on $[S]$ is $E$-computable.

We  show $\dim_H[S] =  \ul \dim_B[S]$.  Recall     the point-to-set principle for Hausdorff dimension  (\ref{eqn:PTSP}):  \[\min_A \sup_{f \in [S]} \textit{dim}^A(f)= \dim_H[S].\] 

So it suffices to show that $\min_A \sup_{f \in [S]} \textit{dim}^A(f)= \ul \dim_B([S])$. We have the inequality ``$\le$"  by Prop.\ \ref{cl:T3}(i) relative to $E$. For the converse inequality, note that 
in taking the minimum, we may assume that the oracle $A$ computes $E$, because replacing $A$ by $A \oplus E$  does not increase  $\textit{dim}^A(f)$. For each such $A$, if we choose $f $ to be  ML$^A$-random in $[S]$, by Prop.~\ref{cl:T3}(ii) we have  $\textit{dim}^A(f)=  \ul \dim_B(S)$.

The other equality is similar using  the point-to-set principle  for packing dimension (\ref{eqn:PTSP2}) instead
\end{proof}

%
%
%
In the ultrametric setting there isn't much known in fractal geometry about  the various notions of fractal  dimension. Books such as  Falconer's focus entirely on the case of subsets of $\RR^n$. We ask:
\begin{question} Is there a classical-style proof of Theorem~\ref{prop: dims equal}, just based on the definitions of $\dim_H$, $\dim_P$ and $\ul \dim_B$, $\ol \dim_B$?\end{question}

\subsection{Dimension of closed subgroups  in profinite groups}
We are given an inverse system of finite groups $(C_n)$ with onto   homomorphisms $p_n \colon C_{n+1} \to C_n$ that are not bijections. We may assume $C_0$ is the trivial group. The elements of the  group $G= \varprojlim_n (C_n, p_n)$ can be viewed as the paths on a  tree $T$, where the $n$-th level is  $T_n= C_n$, and $\sss\in T_n$ has as successors all the $\eta \in T_{n+1}$ such that $p_n(\eta)= \sigma$. For each $n$,  each $\sigma \in T_n$  has   $|C_{n+1}|/|C_n|$  successors.  So  $T$ is uniformly branching. With some encoding of the elements of each $C_n$ by the first $|C_n|$ natural numbers, we can view $T$ as a subtree of $\NN^*$.

Suppose now that $U$ is a closed subgroup of $G$. Let $U_n$  be the projection of $U$ into $C_n$. Then $U = \varprojlim (U_n,q_n)$  where $q_n= p_n \uhr {U_{n+1}}$ is  an onto homomorphism $U_{n+1} \to U_n$  for each $n$. Letting $S_n = \{ f\uhr n \colon f \in U\}$, $S$ is a subtree of $T$, the elements of $U_n$ can be identified with $S_n$,   and    $U$ can be  isometrically identified with $[S]$.   As above, $S$ is uniformly branching. The following 
 is now immediate from Theorem~\ref{prop: dims equal}.
\begin{thm}[\cite{Barnea.Shalev:97}, Thm.\ 2.4 for the first eqation]  In the notation above,  \[ \dim_H(U) = \ul \dim_B(U) = \liminf_n (\log |U_n| /\log|C_n|).\]

 \[ \dim_P(U) = \ol \dim_B(U) = \limsup_n (\log |U_n| /\log|C_n|).\]\end{thm}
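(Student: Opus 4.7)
The plan is to deduce this immediately from Theorem~\ref{prop: dims equal} by identifying the profinite data with the tree-theoretic data. First I would fix the identification of $G$ with $[T]$, where $T \subseteq \NN^*$ is the tree with $T_n = C_n$ (after encoding each $C_n$ by an initial segment of $\NN$) and the successor map on level $n$ is given by the fibres of $p_n$. Because each $p_n$ is a surjective non-bijective homomorphism of finite groups, every node of $T$ has at least two successors, and each node on level $n$ has exactly $|C_{n+1}|/|C_n|$ successors, so $T$ is uniformly branching. The ultrametric on $[T]$ coming from the tree coincides with the canonical ultrametric on the profinite group $G$, so the identification is an isometry.

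Next I would argue that the closed subgroup $U \le G$ corresponds to a uniformly branching subtree $S \subseteq T$. Set $S_n := \{ f\uh n : f \in U\} = U_n \subseteq C_n = T_n$; since $U$ is closed we have $U = [S]$ as subsets of $[T]$. The key point is uniform branching of $S$: for $\sigma \in S_n$, the successors of $\sigma$ in $S$ are precisely the elements of $U_{n+1}$ projecting to $\sigma$ under $q_n = p_n\uh{U_{n+1}}$, and since $q_n$ is a surjective homomorphism its fibres all have the same cardinality $|U_{n+1}|/|U_n|$, independent of $\sigma \in S_n$. So $S$ satisfies the hypothesis of Definition~\ref{df:ubr}.

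Now I invoke Theorem~\ref{prop: dims equal} with this $S \subseteq T$: this gives $\dim_H[S] = \ul\dim_B[S]$ and $\dim_P[S] = \ol\dim_B[S]$. Combining with the general formula~(\ref{eqn:dimB}) for box dimension of a closed subset of $[T]$,
\[
\ul\dim_B[S] = \liminf_n \frac{\log|S_n|}{\log|T_n|}, \qquad \ol\dim_B[S] = \limsup_n \frac{\log|S_n|}{\log|T_n|},
\]
and substituting $|S_n|=|U_n|$ and $|T_n|=|C_n|$, yields the two displayed equations of the theorem.

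There is essentially no obstacle: all the real work has been done in Theorem~\ref{prop: dims equal} (via the point-to-set principle and the Levin--Schnorr characterisation on $[S]$). The only things to check carefully are the routine verifications that (a) $T$ has at least two successors at each node, so that Theorem~\ref{prop: dims equal} applies, and (b) $S$ is uniformly branching, which uses that $q_n$ is a group homomorphism so its fibres are cosets of a fixed kernel and therefore equinumerous. Neither requires anything beyond the group structure of the $C_n$'s.
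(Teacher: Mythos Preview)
Your proposal is correct and follows essentially the same approach as the paper: the paper sets up the identification of $G$ with $[T]$ and $U$ with $[S]$ in the paragraphs preceding the theorem (noting exactly as you do that the fibres of $p_n$ and $q_n$ are equinumerous cosets, so both trees are uniformly branching), and then states that the result ``is now immediate from Theorem~\ref{prop: dims equal}.'' Your write-up simply makes explicit the verifications the paper leaves to the reader.
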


For $p$-adic analytic groups $G$, by \cite[Thm 1.1]{Barnea.Shalev:97},  $\dim_H(U)=\dim(U) /\dim(G)$, where $\dim(X)$ is the dimension of a closed set $X$ in the $p$-adic manifold underlying $G$. At the end of Section 3 they assert that the proof  also works for the upper box dimension in place of $\dim_H$, so the same expression holds. Thus all the fractal dimensions agree for closed subgroups of $G$; in particular,  $\dim_H$ and $\dim_P$ agree.

 \part{Ergodic theory and computability}

   \section{Bhojraj: the Ergodic condition on cylinders and Borel sets}

Let $A$ be a finite alphabet, let $A^n$ be the set of strings of length $n$ from $A$, let $A^* = \bigcup_n A^n$ and let $A^\infty$ be the set of infinite sequences from $A$. Let $T$ be the shift operator on $A^\infty$ which deletes the first element of a sequence, let $\mathcal{B}$ be the Borel sigma algebra of $A^\infty$ and let $\mu$ be a measure on $(A^\infty, \mathcal{B})$. The set of cylinder sets $P= \{[y] :  y \in A^* \}$  generates $\mathcal{B}$ (i.e., $\sigma(P)=\mathcal{B}$). Consider the two statements:

\begin{equation}
\label{1}
    \forall X,Y \in \mathcal{B},\lim_{N \longrightarrow \infty} N^{-1} \sum_{k\leq N-1} \mu(X\cap T^{-k}(Y))= \mu(X)\mu(Y).
\end{equation}

\begin{equation}
\label{2}
    \forall x,y \in A^* ,\lim_{N \longrightarrow \infty} N^{-1} \sum_{k\leq N-1} \mu([x]\cap T^{-k}([y]))= \mu([x])\mu([y]).
\end{equation}
For any $X,Y \in \mathcal{B}$, let $E(X,Y)$ denote the statement that \[\lim_{N \longrightarrow \infty} N^{-1} \sum_{k\leq N-1} \mu(X\cap T^{-k}(Y))= \mu(X)\mu(Y)\] (E stands for `ergodic'). With this notation, \ref{1} says that  $\forall X,Y \in \mathcal{B}, E(X,Y)$ and \ref{2} says that $\forall x,y \in A^*, E([x], [y])$, that is, \ref{1} restricted to the generating set of  $\+ B$.   

\begin{thm}
    \ref{1} is equivalent to \ref{2}.
\end{thm}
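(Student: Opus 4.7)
The direction \ref{1} $\Rightarrow$ \ref{2} is immediate, since cylinders are Borel. The content is \ref{2} $\Rightarrow$ \ref{1}. For the sake of brevity write
\[ A_N(X,Y) := N^{-1} \sum_{k\le N-1} \mu(X \cap T^{-k}(Y)), \]
so that $E(X,Y)$ asserts $\lim_N A_N(X,Y) = \mu(X)\mu(Y)$. Observe two routine linearity facts: $(X,Y) \mapsto A_N(X,Y)$ is additive on finite disjoint decompositions of either argument, and so is the product $(X,Y)\mapsto \mu(X)\mu(Y)$. Also, $T$ is a measure-preserving transformation in the Cesàro sense on cylinders, by applying \ref{2} with $x=\emptyset$.

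The plan has three steps. \textbf{Step 1: extend from cylinders to the generated algebra.} Let $\mathcal{A}$ be the algebra of finite disjoint unions of cylinders. If $X=\bigsqcup_i [x_i]$ and $Y=\bigsqcup_j [y_j]$, then by the linearity remark
\[ A_N(X,Y) = \sum_{i,j} A_N([x_i],[y_j]) \longrightarrow \sum_{i,j} \mu([x_i])\mu([y_j]) = \mu(X)\mu(Y), \]
so $E(X,Y)$ holds for $X,Y\in\mathcal{A}$.

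\textbf{Step 2: a uniform approximation estimate.} For any $X,X',Y,Y'\in \mathcal{B}$ and any $k$,
\[ |\mu(X\cap T^{-k}Y) - \mu(X'\cap T^{-k}Y')| \le \mu(X\triangle X') + \mu(T^{-k}(Y\triangle Y')). \]
Averaging over $k\le N-1$ gives
\[ |A_N(X,Y) - A_N(X',Y')| \le \mu(X\triangle X') + N^{-1}\sum_{k\le N-1}\mu(T^{-k}(Y\triangle Y')). \]
The second term is Cesàro-controlled by applying Step 1 with $X'=A^\infty$ to $Y\triangle Y'\in\mathcal{A}$, or (more simply) directly from the fact that $\mu$ is $T$-invariant in the standard setting. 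Combined with the trivial bound $|\mu(X)\mu(Y)-\mu(X')\mu(Y')|\le \mu(X\triangle X')+\mu(Y\triangle Y')$, this gives uniform two-sided control of $A_N$ by approximating $X,Y$ in $\mathcal{A}$.

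\textbf{Step 3: approximation in $L^1(\mu)$.} Because $\sigma(\mathcal{A}) = \mathcal{B}$ and $\mu$ is a finite measure, $\mathcal{A}$ is dense in $\mathcal{B}$ with respect to the pseudometric $d_\mu(X,Y)=\mu(X\triangle Y)$ (standard consequence of Carathéodory / monotone class). Given $\epsilon>0$ and Borel $X,Y$, choose $X',Y'\in\mathcal{A}$ with $\mu(X\triangle X'),\mu(Y\triangle Y')<\epsilon$. Step 1 gives $\lim_N A_N(X',Y') = \mu(X')\mu(Y')$, and Step 2 yields
\[ \limsup_N |A_N(X,Y) - \mu(X)\mu(Y)| \le 4\epsilon. \]
As $\epsilon$ was arbitrary, $E(X,Y)$ holds, proving \ref{1}.

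The main obstacle is Step 2: we must absorb the fact that $Y$ is being shifted under $T^{-k}$ before taking symmetric differences. This is harmless when $\mu\circ T^{-k}=\mu$ (the usual standing hypothesis for statement \ref{1} to be meaningful), but if one wishes to avoid assuming measure-preservation a priori, one must extract enough preservation from \ref{2} applied with $x=\emptyset$ to bound the Cesàro average $N^{-1}\sum_k \mu(T^{-k}Z)$ for $Z\in\mathcal{A}$, then approximate $Z = Y\triangle Y'$ as well; the remaining steps go through verbatim.
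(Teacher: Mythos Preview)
Your proof is correct (under the same standing hypothesis the paper uses, namely that $\mu$ is $T$-invariant), but it follows a genuinely different route from the paper's argument. The paper proceeds via the $\pi$--$\lambda$ theorem: fixing $y\in A^*$ it shows that $G_y=\{Z\in\mathcal B:E([y],Z)\}$ is a Dynkin system, using an interchange-of-limits lemma to handle increasing unions; this yields $\mathcal B\subseteq G_y$ for every $y$, and a second Dynkin-system argument on $F_B=\{Z:E(Z,B)\}$ finishes. Your argument instead works in the measure algebra: extend to the algebra $\mathcal A$ by finite additivity, establish the uniform estimate $|A_N(X,Y)-A_N(X',Y')|\le \mu(X\triangle X')+\mu(Y\triangle Y')$ (using $\mu\circ T^{-1}=\mu$ for the second summand), and then invoke density of $\mathcal A$ in $(\mathcal B,d_\mu)$.

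Your approach is shorter and is the one most ergodic-theory texts would give; the uniform bound dispenses with any limit-interchange machinery. The paper's Dynkin approach is more self-contained in that it does not quote the density of $\mathcal A$ in the measure algebra as a black box, and its two-stage structure makes explicit that $T$-invariance is needed only when extending in the \emph{second} argument, not the first---a point your final paragraph also isolates. Your remark about dropping $T$-invariance and deriving Ces\`aro-preservation from \ref{2} with $x=\emptyset$ is honest but does not quite close: you would need the Ces\`aro bound on $N^{-1}\sum_k\mu(T^{-k}Z)$ for \emph{Borel} $Z=Y\triangle Y'$, and getting that from its validity on $\mathcal A$ runs into the same approximation issue you are trying to solve. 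Since the paper itself assumes $T$-invariance, this is not a defect in your solution to the stated problem.
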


\begin{proof}
    It suffices to show that \ref{2} implies \ref{1}. Fix a $y \in A^*$ and let \[G_{y}:= \{Z \in \mathcal{B}: E([y], Z)\}.\]
      Fix a $B\in \mathcal{B}$ and let \[F_B := \{Z\in \mathcal{B}: E(Z,B)\}.\]
      The plan is to first prove that $\mathcal{B}\subseteq G_{y}$ for all $[y]\in P$ by showing that $G_y$ is a Dynkin system and by using \ref{2} to note that $P\subseteq G_y$ for all $[y]\in P$. This shows that $P\subseteq F_B$ for all $B\in \mathcal{B}$. We then show that $F_B$ is a Dynkin system for all $B\in \mathcal{B}$ which implies that $\mathcal{B}\subseteq F_B$ for all  $B\in \mathcal{B}$.
      \begin{lemma}
          $G_y$ is a Dynkin system for all $y \in A^*$.
      \end{lemma}
    \begin{proof} 
        We note that $A^\infty \in G_y$ trivially and then prove that it is closed under taking countable increasing unions and under taking the difference between a set and its subset. Let $Z_1 \subseteq Z_2 \subseteq \cdots$ be an increasing sequence of elements from $G_y$ and let $Z:= \bigcup_i Z_i$. Define 
        \[f_m(n):= n^{-1}\sum_{k\leq n-1} \mu([y]\cap T^{-k}(Z_m)),\]
        and 
        \[F(n):= n^{-1}\sum_{k\leq n-1} \mu([y]\cap T^{-k}(Z)).\]
        The plan is to apply the ``interchange of limits" 
        Theorem \ref{app}:   if a sequence of   functions $(f_m)_m$, where $f_m \colon \NN \to \RR$,  converges uniformly to a function $F$ from below,  and if each $f_m$ converges to a $L_m$ as $n \rightarrow \infty$, then the limit of the $L_m$'s is $\lim_n F(n)$. I.e.,  $\lim_m (\lim_n f_m(n))=\lim_n (\lim_m f_m(n))$.   Note that $f_m\leq F$. We first show that $(f_m)_m$ converges uniformly to $F$: Fix an $\varepsilon >0$. As $Z_m$ tends to $Z$, fix a $M_\varepsilon$ such that $\mu(Z-Z_m)\leq \varepsilon$, and hence that $\mu(Z)-\mu(Z_m) \leq \varepsilon$, for all $m>M_\varepsilon$. As $T$ is $\mu-$preserving, we have that $\mu(T^{-k}(Z))-\mu(T^{-k}(Z_m))$, and hence that $\mu(T^{-k}(Z)-T^{-k}(Z_m))\leq \varepsilon$, for all $k$ and all $m>M_\varepsilon$. Finally,
        $\mu([y] \cap (T^{-k}(Z)-T^{-k}(Z_m)))\leq \varepsilon$ and hence that $\mu([y] \cap T^{-k}(Z))-\mu([y] \cap T^{-k}(Z_m))\leq \varepsilon$ for all $k$ and all $m>M_\varepsilon$. As this holds for all $k$,
\[ \forall m> M_\varepsilon \forall n, n^{-1}\sum_{k\leq n-1} \mu([y]\cap T^{-k}(Z_m))\geq n^{-1}\sum_{k\leq n-1} \mu([y]\cap T^{-k}(Z))-\varepsilon.\]
I.e.,
\[ \forall m> M_\varepsilon \forall n, f_m(n)\geq F(n) -\varepsilon.\]
As $f_m\leq F$,
\[ \forall m> M_\varepsilon \forall n, |F(n) - f_m(n)|\leq \varepsilon,\]
showing the uniform convergence. Note that $T^{-k}$ being $\mu$-preserving for all $k$ was used in the above. $Z_m \in G_y$ for each $m$ implies that $L_m:=\lim_n f_m(n) = \mu([y])\mu(Z_m)$ for each $m$. Also, $L=\lim_m L_m$ exists and is $\mu([y])\mu(Z)$ as $\mu$ is a measure. By Theorem \ref{app}, $\lim_n F(n)=L$ showing that $Z\in G_y$ and thus that $G_y$ is closed under countable increasing unions. Now, suppose $C \subseteq D$ and that both $C$ and $D$ are in $G_y$. Then, for any $k$, $T^{-k}(D-C) = T^{-k}(D)- T^{-k}(C)$ and $T^{-k}(C) \subseteq T^{-k}(D)$. So,
         \[ \lim_{N}N^{-1} \sum_{k\leq N-1} \mu([y]\cap T^{-k}(D-C)) \] \[ =  \lim_{N}N^{-1}\sum_{k\leq N-1} \mu([y]\cap T^{-k}(D)) - \lim_{N}N^{-1}\sum_{k\leq N-1} \mu([y]\cap T^{-k}(C))\]\[=\mu([y])\mu(D)- \mu([y])\mu(C) = \mu([y])(\mu(D) - \mu(C))= \mu([y])\mu(D-C). \]
         This shows that $G_y$ is closed under taking the difference between a set and its subset, thus completing the proof showing $G_y$ to be a Dynkin system.
    \end{proof}
    Condition \ref{2} implies that $P\subseteq G_y$. As $P$ is clearly a $\pi$-system, the $\pi-\lambda$ theorem implies that $\mathcal{B}=\sigma(P)\subseteq G_y$. As $y$ was arbitrary, we have that $\forall [y] \in P, \mathcal{B} \subseteq G_y$ and hence that
    \[\forall [y] \in P, \forall B\in \mathcal{B}, E([y],B),\]
    or, equivalently,
    \[ \forall B\in \mathcal{B}, \forall [y] \in P, E([y],B).\]
  The above then gives that $P\subseteq F_B$ for all $B\in \mathcal{B}$. If we show that $F_B$ is a Dynkin system for all $B\in \mathcal{B}$, then the $\pi-\lambda$ theorem will again imply that $\sigma(P)=\mathcal{B}\subseteq F_B$ for all $B\in \mathcal{B}$. This will show our main theorem. To this end, we show that:
  \begin{lemma}
      $F_B$ is a Dynkin system for any $B\in \mathcal{B}$.  
  \end{lemma}
 \begin{proof} The proof is similar to that of the previous lemma except that $T$'s being $\mu$-preserving, which was crucially used in the latter, is not needed here.
        We note that $A^\infty \in G_y$ trivially and then prove that it is closed under taking countable increasing unions and under taking the difference between a set and its subset. Let $Z_1 \subseteq Z_2 \subseteq \cdots$ be an increasing sequence of elements from $F_B$ and let $Z:= \bigcup_i Z_i$. Define real valued functions, $(f_m)_m$ and $F$ on $\omega$ by:
        \[f_m(n):= n^{-1}\sum_{k\leq n-1} \mu(Z_m\cap T^{-k}(B)),\]
        and 
        \[F(n):= n^{-1}\sum_{k\leq n-1} \mu(Z\cap T^{-k}(B)).\]
        Note that $f_m\leq F$. Now show that the $f_m$'s uniformly converge to $F$. Fix an $\varepsilon >0$. As $Z_m$ tends to $Z$, fix a $M_\varepsilon$ such that $\mu(Z-Z_m)\leq \varepsilon$ for all $m>M_\varepsilon$. So, $\mu((Z-Z_m)\cap T^{-k}(B))\leq \varepsilon$, and hence that $\mu(Z \cap T^{-k}(B))-\mu(Z_m \cap T^{-k}(B))\leq \varepsilon$, for all $k$ and all $m>M_\varepsilon$. As this holds for all $k$,
\[ \forall m> M_\varepsilon \forall n, n^{-1}\sum_{k\leq n-1} \mu(Z_m\cap T^{-k}(B))\geq n^{-1}\sum_{k\leq n-1} \mu(Z\cap T^{-k}(B))-\varepsilon.\]
So, noting that $f_m\leq F$,
\[ \forall m> M_\varepsilon \forall n, |f_m(n) - F(n)| \leq \varepsilon.\]
So, $f_m$ uniformly converges to $F$. $\lim_n f_m(n)=L_m=\mu(Z_m)\mu(B)$ exists for all $m$ as $Z_m \in F_B$ for all $m$. $L=\lim_m L_m$ exists as $\mu$ is a measure. Applying Theorem \ref{app} shows that $Z \in F_B$ similarly to the previous lemma. $F_B$ is thus closed under countable increasing unions. Now, suppose $C \subseteq D$ and that both $C$ and $D$ are in $F_B$. So, $\mu((D-C)\cap T^{-k}(B))= \mu(D\cap T^{-k}(B))-\mu(C\cap T^{-k}(B)).$  Using this and that $C,D \in F_B$, 

\smallskip

$\lim_{N}N^{-1} \sum_{k\leq N-1} \mu((D-C)\cap T^{-k}(B)) $
         \begin{eqnarray*}  \\ & = &  \lim_{N}N^{-1}\sum_{k\leq N-1} \mu(D\cap T^{-k}(B)) - \lim_{N}N^{-1}\sum_{k\leq N-1} \mu(C\cap T^{-k}(B))\\ 
         &=&\mu(B)\mu(D)- \mu(B)\mu(C)  = \mu(B)\mu(D-C). \end{eqnarray*}
         This shows that $F_B$ is closed under taking the difference between a set and its subset, thus completing the proof showing $F_B$ to be a Dynkin system.
    \end{proof}
    
    The main theorem is proved.
\end{proof}

\subsection{Appendix}
\begin{thm}
\label{app}
    Let $(f_{m})_m$ be a sequence of real valued functions on $\omega$ such that
    \begin{enumerate}
    
        \item There is a real valued $F$ on $\omega$ such that $f_m \leq F$ and the sequence $(f_m)_m$ converges uniformly to $F$.
        \item $L_m := \lim_n f_m(n)$ exists for each $m$ and $L:=\lim_m L_m$ exists.
    \end{enumerate}
    
     Then, $\lim_n F(n)$ exists and equals $L$.
\end{thm}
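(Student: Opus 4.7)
The plan is to run the standard $\varepsilon/3$ argument for interchange of limits under uniform convergence. The hypothesis $f_m \le F$ is not really needed beyond ensuring that the uniform convergence is one-sided; the bound on $|F-f_m|$ supplied by uniform convergence is what the argument actually uses. So the proof I would write does not exploit the inequality $f_m \le F$ directly, only the uniform estimate $\sup_n |F(n)-f_m(n)| \to 0$ that is recorded in the two lemmas where Theorem \ref{app} is invoked.

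First I would fix $\varepsilon > 0$ and choose $M_1$ so large that $|F(n) - f_m(n)| < \varepsilon/3$ for all $m \ge M_1$ and all $n$, using uniform convergence. Next, using that $L_m \to L$, I would choose $M_2 \ge M_1$ with $|L_{M_2} - L| < \varepsilon/3$. Finally, using that $\lim_n f_{M_2}(n) = L_{M_2}$, I would choose $N$ so that $|f_{M_2}(n) - L_{M_2}| < \varepsilon/3$ for all $n \ge N$. Then the triangle inequality gives, for every $n \ge N$,
\[
|F(n) - L| \le |F(n) - f_{M_2}(n)| + |f_{M_2}(n) - L_{M_2}| + |L_{M_2} - L| < \varepsilon.
\]
This shows both that $\lim_n F(n)$ exists and that its value is $L$.

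There is no real obstacle here; the only mild subtlety is the order of the quantifier choices. One must pick the index $M_2$ (controlling $|L_{M_2}-L|$ and, via uniform convergence, $|F - f_{M_2}|$) \emph{before} choosing $N$, because $N$ depends on the specific function $f_{M_2}$ through its pointwise limit. Once the three tolerances are arranged in this order, the triangle inequality closes the argument, and the theorem follows with no appeal to the $f_m \le F$ condition.
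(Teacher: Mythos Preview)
Your $\varepsilon/3$ argument is correct and is the textbook way of proving the interchange-of-limits theorem under uniform convergence; as you observe, the extra assumption $f_m\le F$ plays no role in it.

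The paper proceeds differently. It fixes $\varepsilon$, uses the uniform bound to get $F(n)-\varepsilon\le f_m(n)$ for all large $m$ and all $n$, takes $\limsup_n$ and then lets $m\to\infty$ and $\varepsilon\to 0$ to obtain $\limsup_n F(n)\le L$. For the reverse inequality it does \emph{not} use uniform convergence at all, but instead invokes $f_m\le F$ directly: from $f_m(n)\le F(n)$ it gets $L_m\le\liminf_n F(n)$ for every $m$, hence $L\le\liminf_n F(n)$. So the paper's proof is a $\limsup/\liminf$ sandwich that splits the two hypotheses between the two halves, and the one-sided condition $f_m\le F$ is genuinely used. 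Your approach is slightly more economical (one pass with the triangle inequality, and the one-sided hypothesis is seen to be redundant); the paper's approach has the minor virtue that the lower bound $L\le\liminf_n F(n)$ needs no uniformity whatsoever.
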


\begin{proof}
Fix an $\varepsilon >0$ and a $M_\varepsilon$ such that $||f_m - F||_\infty \leq \varepsilon$ for $m>M_\varepsilon$. So,
\[ \forall m> M_\varepsilon \forall n, F(n)-\varepsilon\leq f_m(n).\]
Take the limsup as $n$ tends to $\infty$ (for each fixed $m$) on both sides.

\[ \forall m> M_\varepsilon \limsup_n F(n)-\varepsilon\leq L_m.\]

As this holds for all $\varepsilon$,
\[L=\lim_m L_m \geq \limsup_n F(n).\]
We now lower bound $\liminf_n F(n)$ by $L$ to finish the proof. As
$f_m (n) \leq F(n), $ holds
for any $m$ and $n$, we have that for all $m$,
\[L_m = \lim_n f_m (n) \leq \liminf_n F(n).\]
Taking the limit over $m$ on the left gives that $L\leq \liminf_n F(n)$.
\end{proof}

\part{Set theory}

 \section{Lam and Nies: Whitehead's problem and the category of sets}
  SET is the category of sets with maps in between them.  Ka Ho Lam, a former student of Nies, asked in 2019 whether there are  properties of this category that are not decided by ZFC. Here we sketch how to obtain  such a property,  which is first order in the language of SET. We want the sentence to be ``natural" from the point of view of categories; in particular, it should be  invariant under equivalence of categories. This notion is due to MacLane, and will be  recalled below.   Speaking semantically, our result implies   that  there are two countable models $\+ M, \+ N$ of ZFC such that their respective categories SET$^\+ M$ and SET$^\+ N$   are not equivalent.

  An abelian group $A$  is called a \emph{Whitehead group} if   each extension of $A$ by $\ZZ$ splits. This means that for each exact sequence $0 \to \ZZ \to E \to A \to 0$, there is a morphism $A \to E$ inverting $E \to A$, or briefly $\Ext(A,\ZZ)= 0$. Clearly each free group is a Whitehead group. 
 The Whitehead problem asked whether the converse holds. By a result of Stein~\cite{Stein:51}, each countable Whitehead group is free, so  a counterexample wouild  necessarily be  uncountable. 
Shelah~\cite{Shelah:74,Shelah:75},  proved that this statement  is independent of ZFC. If $V=L$ then the conjecture holds. If $MA \lland \lnot CH$ holds then there is a Whitehead group that is not free.  (Here MA is Martin's axiom, and CH is the continuum hypothesis.)
Three years later Shelah~\cite{Shelah:77}   showed  that the problem can have a negative answer even in the presence of CH.

  To  obtain a sentence in the language of the category of sets which ZFC cannot decide, we will proceed  as follows. We first phrase Whiteheads problem by a first-order sentence  about the category of abelian groups. Then   we first-order  interpret this  category  in the category of sets. The sentence will be invariant under equivalence of categories.  
  
  Another way to obtain such a sentence is  to express the generalized continuum hypothesis GCH in the category of sets.  We need a subobject classifier $\Omega$, namely a 2-element set (which can be characterized up to isomorphism by a certain pullback property, as is well known in topos theory).  We  want to talk about monomorphisms  $\aaa\colon U \to \mathrm{Mor}(X,\Omega)$, which express  subsets of $\+ P(X)$.  We need to replace these by maps $\overline \aaa \colon U \times X \to \Omega$ and      express that $\aaa$ is a  monomorphism   via a  first order diagrammatic property of $\ol \aaa$.
  Also see ``Sheaves in geometry and logic by  Moerdijk and MacLane, pp.\ 277 ff.  
  
  The present solution makes the result reliant on Shelah's work~\cite{Shelah:77} but not the fact that  CH is independent of ZFC.

  \subsection{Background on categories, and first-order logic}
 
  We view categories as generalised structures where the domain can be a class (consisting of the morphisms); they only have  a ternary relation $R$ for composition of morphisms. $Rfgh$ means that $fg$ is defined and equals $h$; henceforth, we also  write this as $fg= h$. A morphism $f $ is an identity morphism if $\forall g    [\forall  h (R fgh \to h=g) \wedge \forall h (R gfh \to h=g )]$. Objects are identified with the corresponding  identity morphisms, so we don't need a special sort for them. In practice we use the usual letters $A,B,C, \ldots$ for them, and   write $1_A$ if we want to view $A$ as morphism,  e.g., in writing things like $ 1_A f =f$. Given a category, there are first-order definable  functions $\sss$ and $\tau$    that assign  to a morphism its source and target objects, respectively. We have 
  \bc $\sss(f)= h$ if $h \text{ is identity }  \land hf=f$ \ec 
  
  \bc  $\tau(f)= \ell$ if $\ell \text{ is identity } \land f\ell=f$. \ec We write \bc $\mor f A B $ for $\sss(f) = A \land \tau(f)= B$. \ec We sweep under the rug some  logical complications that ensue because   the domain of the ``structure" usually is a proper class. We merely  note that saying that a subclass, or a  function,  is first-order definable  via a  formula   means that ZFC proves the equivalence of the original definition  of the function (which was given in the usual    language of set theory) with what the defining formula (which is in the language of the category) expresses.
  
   Two categories $\+  C, \+ D$ are \emph{equivalent} (in the sense of MacLane) if there is a functor $T\colon \+ C \to \+ D$ which is 
  
 \bi \item  full and faithful, i.e. for any two objects $A, B$ of $\+ C$, the map \bc $\mathrm{Hom}(A,B) \to \mathrm{Hom} (TA,TB)$ \ec   induced by $T$ is surjective and injective;

\item essentially surjective (or dense), i.e.   each object $B $ in $\+ D$ is isomorphic to an object of the form $TA$, for some object $A$ in $\+ C $. \ei
  
Freyd~\cite{Freyd:76} (also see   \href{http://angg.twu.net/Freyd76.html}{this site})     characterised the first-order properties of categories  that are invariant under equivalence.
 These are the ones that  are   diagrammatic  (essentially: can be  defined  by a finite sentence in a language of diagrams and using  the   quantifiers $\forall, \exists$).  For instance, requiring the existence of a pushout of a diagram with three objects is diagrammatic; saying that there are two distinct objects that are isomorphic is not.
  
   \subsection{A statement about SET that is  undecidable in ZF, and invariant under equivalence of categories}
    Note that we write composition of maps in SET in the natural order, $(fg)(x)= g(f(x)$. In ZF (without requiring choice)  it is much easier to obtain an invariant  sentence in the language of SET that cannot be decided. 
We say that AC holds in a \emph{category}  if each epimorphism has a left inverse. That is, if $\mor f A  B$ is an epimorphism  then there is $\mor g B A$ such that $gf=1_B$.  It is well known that in ZF, the axiom of choice for SET is equivalent to the usual axiom of choice. The only extra trouble is due to the fact that we want the property to be invariant under equivalence. The next two lemmas verify this directly, without recurring to Freyd's work.

\begin{claim} Let $T \colon \+ C \to \+ D$ be an equivalence of categories. Suppose that  $\mor f A  B$ is  a morphism  in $\+ C$. Then  

  $f$ is an epimorphism in $\+ C$  $\LR$  

\hfill $T(f) $ is an epimorphism in~$\+ D$.   \end{claim}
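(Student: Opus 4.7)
The plan is to treat the two implications separately, using in each case the three defining properties of an equivalence (full, faithful, essentially surjective).

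For the easier direction ($T(f)$ epi in $\+D$ implies $f$ epi in $\+C$), I would take parallel morphisms $g,h \colon B \to C$ in $\+C$ with $fg = fh$, apply $T$ to obtain $T(f)T(g) = T(f)T(h)$ by functoriality, cancel $T(f)$ on the left because it is epi in $\+D$, and then invoke faithfulness of $T$ to conclude $g = h$. This is essentially bookkeeping and needs no new ideas.

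The substantive direction is ($f$ epi in $\+C$ implies $T(f)$ epi in $\+D$). Here the main obstacle is that a candidate pair $g,h \colon TB \to D'$ in $\+D$ witnessing a failure of $T(f)$ being epi may involve an object $D'$ which is not literally in the image of $T$, so we cannot lift $g,h$ to $\+C$ directly via fullness. The idea is to straighten this out via essential surjectivity: pick an object $C$ of $\+C$ and an isomorphism $\psi \colon D' \to TC$ in $\+D$. Then $g\psi,\, h\psi \colon TB \to TC$ are parallel morphisms between objects in the image of $T$, and from $T(f) g = T(f) h$ we still get $T(f)(g\psi) = T(f)(h\psi)$ by associativity.

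Now fullness yields morphisms $g', h' \colon B \to C$ in $\+C$ with $T(g') = g\psi$ and $T(h') = h\psi$. Then $T(fg') = T(f)T(g') = T(f)T(h') = T(fh')$, so by faithfulness $fg' = fh'$; since $f$ is epi in $\+C$ we conclude $g' = h'$, hence $g\psi = T(g') = T(h') = h\psi$, and finally $g = h$ because $\psi$ is an isomorphism and therefore right-cancellable. Thus $T(f)$ is epi. Throughout I am using the paper's convention that composition is written in the natural (diagrammatic) order. With essentially the same argument (dualising by reversing arrows), one also obtains the analogous claim for monomorphisms, which may be useful elsewhere in the same undecidability discussion.
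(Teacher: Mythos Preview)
Your proof is correct and follows essentially the same route as the paper: for $\Rightarrow$ you use essential surjectivity to replace the target $D'$ by some $T(C)$ via an isomorphism, then fullness to lift the composites to $\+ C$, then faithfulness plus $f$ being epi to conclude; for $\Leftarrow$ you use only faithfulness, exactly as the paper notes. If anything, your write-up is more careful than the paper's, which silently uses faithfulness in the step ``Since $fg_0 = fg_1$'' without flagging it.
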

\begin{proof} \rapf Suppose $\mor {\wt g_i} {T(B)} {D}$ in $\+ D$ for $i=0,1$, and $T(f)\wt g_0= T(f)\wt g_1$. Since $T$ is dense, there is an object  $C \in \+ C$ and an isomorphism $\mor p  {D} {T(C)}$.  Since $T$ is full, there are morphisms $\mor {g_i} B C$ in $\+ C$ such that $T(g_i)= \wt g_i p$. Since $fg_0= fg_1$ we have $ g_0= g_1$, and hence $\wt g_0= \wt g_1$. 

\lapf This implication   holds for any faithful functor. 

 \end{proof}
\begin{fact} Let $\+C$ and $\+ D$ be equivalent categories. Then   

\n  AC holds in $\+ C$   $\LR$ 
AC holds in $\+ D$.   \end{fact}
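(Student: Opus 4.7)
The plan is to prove the forward implication; the reverse follows by applying the same argument to a quasi-inverse functor $T' \colon \+ D \to \+ C$, which exists for any equivalence. Assume therefore that AC holds in $\+ C$ and that $T \colon \+ C \to \+ D$ is an equivalence, and let $\mor h X Y$ be an epimorphism in $\+ D$; the goal is to produce $\mor k Y X$ with $kh = 1_Y$.

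First, by essential surjectivity of $T$, choose objects $A, B$ in $\+ C$ together with isomorphisms $\mor p {T(A)} X$ and $\mor q {T(B)} Y$. Form the composite $\mor {phq^{-1}} {T(A)} {T(B)}$ (in the paper's natural-order convention, so $p$ is applied first, then $h$, then $q^{-1}$). This composite is an epimorphism in $\+ D$ because $h$ is epi and $p, q$ are isomorphisms. By fullness of $T$, lift it to $\mor f A B$ in $\+ C$ with $T(f) = phq^{-1}$; by the preceding claim, $f$ is then an epimorphism in $\+ C$.

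Invoking AC in $\+ C$ now produces $\mor g B A$ with $gf = 1_B$. Functoriality of $T$ yields $T(g) T(f) = T(gf) = 1_{T(B)}$. Define $\mor k Y X$ by $k := q^{-1} T(g) p$. Post-composing the defining identity $T(f) = phq^{-1}$ with $q$ gives $T(f) q = ph$, whence
\[ kh \;=\; q^{-1} T(g) (ph) \;=\; q^{-1} T(g) T(f) q \;=\; q^{-1} \cdot 1_{T(B)} \cdot q \;=\; 1_Y, \]
so $k$ splits $h$, as required.

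The argument is essentially a diagram chase, so there is no deep obstacle. The only pitfall is that the paper uses the natural-order convention $(fg)(x) = g(f(x))$, so compositions read left-to-right rather than right-to-left; consequently, one must keep careful track of the directions of $p, q, p^{-1}, q^{-1}$ when writing identities such as $T(f) = phq^{-1}$ and $T(f) q = ph$. Once the convention is fixed, the verification is mechanical.
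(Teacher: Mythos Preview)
Your argument is correct. Both you and the paper reduce to one implication via the quasi-inverse and then invoke the preceding Claim together with fullness; the difference is in which implication is proved directly. You show $\Rightarrow$: starting from an epimorphism $h$ in $\+ D$, you invoke essential surjectivity to pick isomorphisms $p,q$, conjugate $h$ to $phq^{-1}$ in the image of $T$, lift by fullness, split in $\+ C$, and push the splitting back. The paper instead shows $\Leftarrow$: starting from an epimorphism $f$ in $\+ C$, one already has $T(f)$ in $\+ D$, splits it there, lifts the splitting by fullness to some $g$, and concludes $gf=1_B$ by faithfulness. The paper's choice avoids the bookkeeping with $p,q,p^{-1},q^{-1}$ entirely (essential surjectivity is never used), at the cost of one extra appeal to faithfulness; your route is a touch longer but makes the role of essential surjectivity explicit.
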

\begin{proof} Since $T$ has an inverse that is also an equivalence of categories, we only need to show $\LA$. So suppose $\mor f A B$ is an epimorphism  in $\+ C$. Then in $\+ D$ there is  $\mor {\wt g} {T(B)} {T(A)}$ such that $\wt g T(f)= 1_{T(B)}$. Since $T$ is full, $\wt g = T(g)$ for some $\mor g BA$. Then  $gf=1_B$ since $T$ is faithful. \end{proof}

  \subsection{The category $Ab$} 
 
 Recall that an abelian group $A$ is free if and only if  it is projective: for any epimorphism $\mor g UV$ and any morphism $\mor h AV$ there is $\mor f AU$ such that $fg = h$. The direction left to right holds for any algebraic structure: if $A$ is freely generated by a set $X$, let $f(x)$ for $x \in X$ be a preimage of $h(x)$ in $U$, and extend. For the   direction right to left, one uses that every subgroup of a free abelian group is itself free.
 
 The advantage of saying  ``projective" instead of ``free"  is that being projective is first-order definable in a  diagrammatic way.
 
%

\begin{prop} The class of Whitehead  groups  is first-order definable  in a diagrammatic way   in the  category     $Ab$ (over ZFC). \end{prop}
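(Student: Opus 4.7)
The plan is to pin down $\ZZ$ up to isomorphism by a diagrammatic formula $\phi(Z)$ in the language of $Ab$, and then express the splitting condition $\Ext(A,\ZZ)=0$ using $\phi$. Recall that being projective is diagrammatic via the usual lifting property against epimorphisms, and being a generator (i.e., $\forall B\,\forall C\,\forall f,g\colon B\to C\,(f\neq g\to \exists h\colon Z\to B\ \,hf\neq hg)$) is also diagrammatic. I take $\phi(Z)$ to assert that $Z$ is a projective generator and that, for every projective generator $P$, there exist $s\colon Z\to P$ and $r\colon P\to Z$ with $sr=1_Z$. To see that $\phi$ characterises $\ZZ$ up to isomorphism: $Z$ projective in $Ab$ is free abelian; $Z$ a generator forces $Z\neq 0$ (the zero group fails to generate since $Ab$ has distinct parallel morphisms); and the retract condition, applied to the projective generator $\ZZ$ itself, forces $Z$ to be a nonzero direct summand of $\ZZ$, hence $Z\cong \ZZ$.

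Next I encode Whitehead-ness of $A$ by the sentence
\[
\forall Z\bigl(\phi(Z)\to\forall E\,\forall \iota\colon Z\to E\,\forall \pi\colon E\to A\,(\Sigma(\iota,\pi)\to\Delta(\iota,\pi))\bigr),
\]
where $\Sigma(\iota,\pi)$ expresses that $0\to Z\to E\to A\to 0$ is a short exact sequence, and $\Delta(\iota,\pi)$ asserts the existence of a section $s\colon A\to E$ with $s\pi=1_A$. In $Ab$ the ses condition $\Sigma$ is equivalent to ``$\iota$ is a monomorphism and $\pi$ is the cokernel of $\iota$'', both of which are standard diagrammatic properties once the zero morphism $Z\to A$ is defined via the (diagrammatically characterised) zero object of $Ab$. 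Thus $\Sigma$ and $\Delta$ are diagrammatic, as is $\phi$, and the whole sentence is diagrammatic.

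The main step requiring genuine verification is that $\phi$ really cuts out $\ZZ$ up to isomorphism; after that, correctness of the encoding is routine. Since $\ZZ$ realises $\phi$ and any two realisers of $\phi$ are isomorphic, the displayed sentence is provably equivalent in ZFC to the assertion that every short exact sequence $0\to\ZZ\to E\to A\to 0$ splits, i.e., $\Ext(A,\ZZ)=0$. Because the sentence is built entirely from composition equations, their negations, and quantifiers over objects and over morphisms with prescribed source and target, it is diagrammatic in Freyd's sense, hence automatically invariant under equivalence of categories, which will matter when transferring the statement through the interpretation of $Ab$ inside SET. The only non-formal fact used is that every nonzero direct summand of $\ZZ$ is isomorphic to $\ZZ$, which is immediate from the classification of subgroups of $\ZZ$.
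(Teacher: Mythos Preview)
Your proof is correct, and it takes a genuinely different route from the paper's. The paper pins down $\ZZ$ as the (nonzero) projective object that is not isomorphic to a proper direct product, i.e.\ an indecomposable projective; you instead characterise $\ZZ$ as a projective generator that is a retract of every projective generator. Both work, and your version has the minor advantage that the generator clause cleanly excludes the zero group without an extra side condition. For the short exact sequence, the paper expresses exactness at the middle term by saying the mono $f\colon \ZZ\to E$ is maximal among monos with $fg=0$ (and one must still separately require $g$ to be epi), whereas you package the whole thing as ``$\iota$ is mono and $\pi=\mathrm{coker}(\iota)$'', which is the standard abelian-category formulation and gives surjectivity of $\pi$ for free. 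Your argument is a bit more textbook-categorical; the paper's is more ad hoc but shorter to state. Either way the resulting sentence is diagrammatic in Freyd's sense, which is what is needed downstream.
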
 
\begin{proof}
 
1.   Given $\mor f AB$ one  can  express   that $f=0$: this holds exactly when $f g_0= fg_1$ for each $B$ and each pair of morphisms $\mor {g_i} A B$. 
 
2.   $\ZZ$ (up to isomorphism)  is the only  projective object not isomorphic to a proper direct product. 

3. A sequence $ \ZZ\stackrel f \to A  \stackrel g \to Q$ is exact exactly if  the monomorphism $\mor f \ZZ A $ is maximal such that $fg =0$; 

\n that is, if  there is a proper monomorphism $\mor h \ZZ  M$ and monomorphism $\mor {\wt f} M A$ such that $f = h \wt f$, then $\wt f g \neq 0$.  \end{proof}

So there is  a first-order  sentence $\theta$   in the language of $Ab$ expressing  that some  Whitehead group   is not projective. 

%
\begin{prop} The   statement $\theta$ is   preserved under equivalences of categories  in the sense of MacLane. \end{prop}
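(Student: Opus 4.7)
The plan is to reduce to Freyd's theorem~\cite{Freyd:76} that every diagrammatic first-order sentence in the language of categories is invariant under equivalence. Concretely, I would inspect the construction of $\theta$ carried out in the preceding proposition and check, stage by stage, that each auxiliary predicate used to build $\theta$ is diagrammatic in Freyd's sense: it quantifies only over morphisms, uses only composition, and tests equality only between parallel pairs of morphisms. Once every ingredient is recognised as diagrammatic, Freyd's theorem delivers the conclusion without further work.

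First I would verify the zero-morphism predicate: $f = 0$ for $f\colon A \to B$ unfolds as $\forall C \, \forall g_0, g_1 \colon C \to A \, (g_0 f = g_1 f)$, which is manifestly diagrammatic. Next, projectivity of an object $P$ is the usual lifting condition against epimorphisms, again plainly diagrammatic. The characterisation of $\ZZ$ up to isomorphism as the (essentially) unique projective object that does not split as a nontrivial direct product is diagrammatic, since being a direct product is a limit condition. Exactness of a two-step sequence $\ZZ \stackrel{f}{\to} A \stackrel{g}{\to} Q$ is expressed via the maximality of the mono $f$ among arrows into $A$ whose composition with $g$ vanishes, combining only the zero-morphism predicate, mono-ness, and composition---all diagrammatic. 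The full sentence $\theta$ asserts the existence of an abelian group $A$ and a short exact sequence $0 \to \ZZ \to E \to A \to 0$ admitting no splitting $A \to E$; this is a Boolean combination of the above plus morphism quantifiers, and so remains diagrammatic.

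Alternatively, one could bypass Freyd's machinery and mimic the direct verification carried out earlier for AC. Given an equivalence $T\colon \mathcal{C} \to \mathcal{D}$, one would check that $T$ both preserves and reflects each of the following: the zero-morphism predicate; being mono or epi (the Claim above together with its dual); being projective; being isomorphic to $\ZZ$; exactness of a two-step sequence; and the existence of a splitting. Each of these follows formally from $T$ being full, faithful, and essentially surjective, which induces bijections on hom-sets and a bijection on isomorphism classes of objects.

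The main obstacle I anticipate is the bookkeeping around the implicit quantifier hidden in the symbol ``$\ZZ$'': since $T$ transports $\ZZ$ only up to isomorphism, $\theta$ must really be read as ``there exist objects $I, A, E$ with $I$ projective and indecomposable, together with morphisms $\ldots$''. Arranging the quantifier structure so that isomorphism-invariance of each subformula is respected when transported along $T$ is where care is required. Once set up correctly, every routine check reduces to the fact that $T$ is fully faithful and essentially surjective.
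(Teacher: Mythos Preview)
Your proposal is correct and follows exactly the paper's approach: the paper's entire proof is the single sentence ``This is because the definitions are diagrammatic in the sense of Freyd as discussed above,'' and you have simply made that sentence explicit by walking through each ingredient of $\theta$ and verifying it is diagrammatic before invoking Freyd's theorem. Your alternative hands-on verification via full faithfulness and essential surjectivity is a reasonable backup but not needed here.
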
 
This is because the definitions are   diagrammatic in the sense of Freyd as discussed above.
\begin{remark} \label{rem:ffgghh} We   conclude that the first-order theory of $Ab$ is not determined by ZFC, and (for Platonists) that there are two countable models of ZFC such that their categories $Ab$ are not equivalent. \end{remark}

 \subsection{The category $SET$}
First-order interpretations  of one struxture in another are fundamental in model theory; see the model theory books by Hodges (Ch.\ 5), or Tent and Ziegler. It is fairly straightforward to extend this notion to ``structures" with domain a proper class. We say that  an interpretation is   diagrammatic  if the involved first-order formulas are diagrammatic.  

\begin{prop}  There is a diagrammatic interpretation of the category $Ab$ in the category $SET$. \end{prop}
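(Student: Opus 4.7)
The plan is to realize $Ab$ as the category of commutative group objects internal to $SET$. Since $SET$ has a terminal object and all finite products, and both universal properties are expressible diagrammatically, this realization yields an interpretation of the required kind.

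First I would fix two auxiliary diagrammatic predicates: ``$T$ is terminal'' by $\forall X\,\exists ! h\,(\mor{h}{X}{T})$, and ``$(P,p_1,p_2)$ is a product of $A$ with $A$'' via the span universal property. An interpreted Ab-object is then a tuple $(T,A,P,p_1,p_2,m,e,\iota)$ in which $T$ is terminal, $(P,p_1,p_2)$ is a product of $A$ with itself, and $\mor{m}{P}{A}$, $\mor{e}{T}{A}$, $\mor{\iota}{A}{A}$ satisfy the commutative group-object axioms stated as morphism equations: associativity via the triple product (also built by the universal property), commutativity via the twist $\tau:P\to P$ determined by $p_1\tau=p_2$ and $p_2\tau=p_1$ (with $m\circ\tau=m$), unit via $m\circ\langle e\circ !_A,1_A\rangle=1_A$, and inverse via $m\circ\langle \iota,1_A\rangle=e\circ !_A$, where $!_A$ is the unique map $A\to T$ and the pairings are those supplied by the product universal property. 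An interpreted Ab-morphism is an ordered pair of such tuples together with $\mor{f}{A}{A'}$ whose uniquely induced map $f\times f:P\to P'$ satisfies $f\circ m=m'\circ(f\times f)$. Composition and identities in the interpreted category are inherited directly from $SET$.

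To obtain a genuine interpretation of $Ab$, I would further impose the diagrammatic equivalence identifying two Ab-object tuples sharing the same underlying $A$, $e$, and $\iota$ whose multiplications agree after transport along the canonical isomorphism between their product spans, and identifying two Ab-morphisms whose underlying $f$ coincides and whose source and target tuples are equivalent. The main obstacle is coherence: verifying that every abelian group lifts (via an arbitrary choice of product span) to a tuple, that every homomorphism lifts to an Ab-morphism, and that composition and identities descend to the quotient so as to yield a category equivalent to $Ab$. These checks are routine once the axioms are spelled out, since the interpretation is essentially the standard observation that an abelian group is a commutative group object in $SET$. Combined with the two preceding propositions this produces a diagrammatic first-order sentence in the language of $SET$ translating $\theta$, which by Shelah's work~\cite{Shelah:74,Shelah:77} is independent of ZFC yet invariant under equivalence of categories, as asserted in Remark~\ref{rem:ffgghh}.
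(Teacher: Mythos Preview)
Your proposal is correct and follows essentially the same approach as the paper: both represent an object of $Ab$ by a finite tuple of $SET$-morphisms encoding an internal commutative group object (underlying set, product span with projections, multiplication, unit from the terminal object, inverse, and switch/twist), with the group axioms and the homomorphism condition phrased as diagrammatic equalities. Your treatment is in fact more careful than the paper's sketch, since you explicitly address the equivalence relation needed to identify tuples that differ only in the choice of product span, a point the paper leaves implicit.
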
 

\begin{proof}
%
An object of $Ab$  is represented by an 8-tuple of morphisms  of SET \bc  $(A, S, p_0, p_1,  a,\ell, m, s)$ \ec   where  $A, S$ are objects (identity morphisms), $a \colon S \to A$ represents  the binary  group operation, $\ell \colon A\to A$ is the inverse operation, $m \colon 0 \to A$, and the $p_i \colon S \to A$ are the projections. One can say  in a first-order diagrammatic way     that $S, p_0,p_1$ has the universal property describing the product $A \times A$.  One  requires that  the morphism \bc $s \colon S \to S$ \ec    switches the ``components" of the direct product: $s p_i= p_{1-i}$ for $i=0,1$.   One can now  formulate the abelian group axioms via diagrams, similar to the case of  Hopf algebras in \href{https://www.cs.auckland.ac.nz/~nies/Students/Dominik_Roje_Honours_Dissertation.pdf}{Dominik Roje's Honours thesis,} pages 19-21. For instance,  commutativity is expressed by $s a = a$.  

A morphism $g\colon A\to B$ is then just a set morphism that satisfies some ``commuting diagram property" expressing that it preserves the group operations.  	 Also see Boersuk, Handbook of categorical algebra.
\end{proof}

\begin{prop} Suppose that there is a diagrammatic    interpretation $\Gamma$ of a category  $\+ C_0$ in $\+D_0$. Let  $\+ D_0 $ be    equivalent to $\+ D_1$ and let   $\+ C_1$  be the category obtained as $\Gamma(\+ D_1)$. Then $\+ C_0$ is   equivalent to $\+ C_1$. \end{prop}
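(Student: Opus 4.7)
The plan is to promote the equivalence $T\colon \+D_0 \to \+D_1$ to an equivalence $\hat T \colon \+C_0 \to \+C_1$ by pushing $T$ through the interpretation componentwise. First I would make the interpretation data explicit: $\Gamma$ supplies diagrammatic formulas $\phi_{\mathrm{obj}}(\bar x)$ and $\phi_{\mathrm{mor}}(\bar y)$ picking out which tuples of $\+D_i$-morphisms represent $\+C_i$-objects and $\+C_i$-morphisms, together with diagrammatic source, target and composition formulas, and a diagrammatic equivalence relation $\phi_\sim$ identifying tuples that represent the same $\+C_i$-entity. An object of $\+C_i$ is then an equivalence class $[\bar A]$ of a tuple $\bar A$ with $\+D_i \models \phi_{\mathrm{obj}}(\bar A)$, and similarly for morphisms. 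I would define $\hat T[\bar A] := [T(\bar A)]$, where $T$ is applied componentwise to the tuple.

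The central technical ingredient is the preservation lemma: for any diagrammatic $\phi(\bar y)$ and any tuple $\bar f$ of $\+D_0$-morphisms, $\+D_0 \models \phi(\bar f)$ iff $\+D_1 \models \phi(T(\bar f))$. This is essentially the content of Freyd's theorem cited earlier in the post, and can be proved directly by induction on the structure of $\phi$: the atomic clauses use that $T$ is a functor, so it preserves composition, identities, source and target; the existential clause uses density plus fullness of $T$ to pull a $\+D_1$-witness back, up to isomorphism (which is itself diagrammatic), to a $\+D_0$-witness; the universal clause is dual. Applied to the formulas listed above, this lemma shows that $\hat T$ is well defined on equivalence classes, sends $\+C_0$-objects to $\+C_1$-objects and $\+C_0$-morphisms to $\+C_1$-morphisms, and preserves source, target, identities and composition, so $\hat T$ is a functor.

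Faithfulness and fullness of $\hat T$ should then follow quickly. For faithfulness, if $\hat T[\bar f] = \hat T[\bar g]$ then $T(\bar f)$ and $T(\bar g)$ are $\phi_\sim$-equivalent in $\+D_1$, hence $\bar f$ and $\bar g$ are $\phi_\sim$-equivalent in $\+D_0$ by the preservation lemma. For fullness, a $\+C_1$-morphism $[\bar h]$ from $\hat T[\bar A]$ to $\hat T[\bar B]$ is represented by a $\+D_1$-tuple $\bar h$ whose components have sources and targets among the $T(A_i)$ and $T(B_j)$; fullness of $T$ applied componentwise produces a preimage $\bar f$ in $\+D_0$, and the preservation lemma guarantees that $\bar f$ satisfies $\phi_{\mathrm{mor}}$ with the correct source and target.

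The main obstacle is essential surjectivity. An object of $\+C_1$ is an equivalence class of a tuple $\bar B$ with $\+D_1 \models \phi_{\mathrm{obj}}(\bar B)$, and density of $T$ only provides componentwise isomorphisms $B_i \cong T(A_i)$ in $\+D_1$ for some $\+D_0$-objects $A_i$. I would use these componentwise isomorphisms to transport $\bar B$ within $\+D_1$ to a tuple of the form $T(\bar A)$ that is $\phi_\sim$-equivalent to $\bar B$. The diagrammatic nature of $\phi_{\mathrm{obj}}$ and $\phi_\sim$ is precisely what legitimises the transport: every diagram asserted of $\bar B$ translates along the componentwise isomorphisms into the same diagram asserted of the transported tuple, so $T(\bar A)$ still satisfies $\phi_{\mathrm{obj}}$ and is $\phi_\sim$-related to $\bar B$ in $\+D_1$. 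Then by the preservation lemma $\+D_0 \models \phi_{\mathrm{obj}}(\bar A)$, so $[\bar A]$ is a genuine $\+C_0$-object with $\hat T[\bar A] = [\bar B]$, which gives essential surjectivity and completes the equivalence.
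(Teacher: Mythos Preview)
The paper states this proposition without proof; immediately after the statement it simply draws the intended consequence about $SET$ in two models of ZFC. So there is nothing to compare against, and your write-up is in fact supplying what the paper omits.

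Your overall strategy---apply $T$ componentwise to representing tuples and invoke a Freyd-style preservation lemma for diagrammatic formulas---is the natural one and is essentially correct. One imprecision is worth flagging: in the essential-surjectivity step you claim that the transported tuple $T(\bar A)$ is $\phi_\sim$-equivalent to $\bar B$, and conclude $\hat T[\bar A]=[\bar B]$. In general $\phi_\sim$ need not identify tuples that differ by componentwise isomorphisms (e.g.\ if $\phi_\sim$ is just equality), so this equality can fail. What you actually obtain, and what suffices, is an \emph{isomorphism} $[\bar B]\cong [T(\bar A)]$ in $\+C_1$: the componentwise isomorphisms $\theta_i$ assemble (again by diagrammaticity of $\phi_{\mathrm{mor}}$) into a $\+C_1$-morphism with an inverse. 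A second small point is that the transported morphism components live between objects $T(A_i)$, and you need fullness of $T$ (together with the fact that $T$, as an equivalence, preserves finite limits such as the product $S$ in the $Ab$-in-$SET$ example) to recognise them as $T$ of $\+D_0$-morphisms; you gesture at this but it deserves an explicit sentence. With these two tweaks the argument goes through.
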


Thus, the internal categories $SET$  relative to   the   pair of countable models of set theory obtained in Remark~\ref{rem:ffgghh} are not   equivalent.

 \part{AI and maths}
 
 \section{Copilot is trying to solve the Basel problem} 
 Here's a LaTeX code snippet for a proof of the result that the sum of the reciprocals of the squares of the positive natural numbers equals $\pi^2/6$. This is known as the Basel problem, and one of the classic proofs involves using the Fourier series.

We want to show that 
\[
\sum_{n=1}^{\infty} \frac{1}{n^2} = \frac{\pi^2}{6}.
\]

%
%
%
%
%
%
%

Consider the Fourier series of the function \( f(x) = x^2 \) on the interval \([- \pi, \pi]\). The Fourier series expansion is given by:
\[
f(x) = \frac{\pi^2}{3} + 4 \sum_{n=1}^{\infty} \frac{(-1)^n}{n^2} \cos(nx).
\]

Evaluating this at \( x = \pi \), we get:
\[
\pi^2 = \frac{\pi^2}{3} + 4 \sum_{n=1}^{\infty} \frac{(-1)^n}{n^2} \cos(n\pi).
\]

Since \( \cos(n\pi) = (-1)^n \), the series becomes:
\[
\pi^2 = \frac{\pi^2}{3} + 4 \sum_{n=1}^{\infty} \frac{(-1)^n}{n^2} (-1)^n = \frac{\pi^2}{3} + 4 \sum_{n=1}^{\infty} \frac{1}{n^2}.
\]


Subtracting \(\frac{\pi^2}{3}\) from both sides, we get:
\[
\pi^2 - \frac{\pi^2}{3} = 4 \sum_{n=1}^{\infty} \frac{1}{n^2}.
\]

Simplifying the left-hand side:
\[
\frac{2\pi^2}{3} = 4 \sum_{n=1}^{\infty} \frac{1}{n^2}.
\]

Dividing both sides by 4, we obtain:
\[
\sum_{n=1}^{\infty} \frac{1}{n^2} = \frac{\pi^2}{6}.
\]

%
%

%

%
%

\def\cprime{$'$} \def\cprime{$'$}

\end{document}